\newtheorem{definition}{Definition}[section]
\newtheorem{conj}[definition]{Conjecture}
\newtheorem{remark}[definition]{Remark}
\newtheorem{example}[definition]{Example}
\newenvironment{ack}{\noindent{\bf Acknowledgements}.}{}
\newtheorem{lemma}[definition]{Lemma}
\newtheorem{proposition}[definition]{Proposition}
\newtheorem{theorem}[definition]{Theorem}
\newtheorem{corollary}[definition]{Corollary}
\def\P{{\mathbb{P}}}
\def\K{{\mathbb{K}}}
\def\N{{\mathbb{N}}}
\def\I{{\mathcal{I}}}
\def\R{{\mathrm{R}}}
\def\ualpha{{\underline{\alpha}}}
\def\ubeta{{\underline{\beta}}}
\def\ugamma{{\underline{\gamma}}}
\def\udelta{{\underline{\delta}}}
\def\bU{{\mathbf{U}}}
\def\bV{{\mathbf{V}}}
\def\bX{{\mathbf{X}}}
\def\bg{{\mathbf{g}}}
\def\bp{{\mathbf{p}}}
\def\bh{{\mathbf{h}}}
\def\cS{{\mathcal{S}}}
\def\cE{{\mathcal{E}}}
\def\cF{{\mathcal{F}}}
\def\T{{\underline{T}}}
\def\X{{\underline{X}}}
\def\Z{{\mathbb{Z}}}
\def\bff{{\mathbf{f}}}
\def\bS{{\mathbf{S}}}
\def\bs{{\mathbf{s}}}
\def\S{{\mathrm{S}}}
\def\bdeg{{\mbox{bideg}}}
\def\cC{{\mathcal{C}}}
\def\e{{\bf{e}}}
\begin{document}
\title{The Rees Algebra of a monomial plane parametrization}

\author{Teresa Cortadellas Ben\'itez}
\address{Universitat de Barcelona, Facultat de Formaci\'o del Professorat.
Passeig de la Vall d'Hebron 171,
08035 Barcelona, Spain}
\email{terecortadellas@ub.edu}

\author{Carlos D'Andrea}
\address{Universitat de Barcelona, Facultat de Matem\`atiques.
Gran Via 585, 08007 Barcelona, Spain} \email{cdandrea@ub.edu}
\urladdr{http://atlas.mat.ub.es/personals/dandrea}
\thanks{Both authors are supported by the Research Project MTM2010--20279 from the
Ministerio de Ciencia e Innovaci\'on, Spain}

\subjclass[2010]{Primary 13A30; Secondary 05E45,14H50}

\begin{abstract}
We compute a minimal bigraded resolution of the Rees Algebra associated to a proper rational parametrization of a monomial plane curve. We describe explicitly both the bigraded Betti numbers and the maps of the resolution in terms of a generalized version of the Euclidean Algorithm. We also explore the relation between pencils of adjoints of the monomial plane curve and elements in a suitable piece of the defining ideal of the Rees Algebra.
\end{abstract}
\date{\today}
\maketitle

\section{Introduction}\label{zero}
In the last years, a lot of attention has been given to compute minimal generators of {\em moving curve ideals of rational parametrizations.} This is partially motivated by understanding the so-called method of implicitization of a rational parametrization by using  moving curves  stated by Sederberg and his collaborators in the 90's, see \cite{SC95, SGD97}. After the connection made by David Cox in \cite{cox08} between this problem and the computation of the defining ideal of the Rees Algebra associated to the parametrization, several  cases have been studied, see for instance \cite{CHW08,HSV08,bus09,HSV09,KPU09,HW10,CD10,HS12,CD13,CD13b,KPU13} and the references therein.
In this paper, we deal with the case of the  monomial plane curve, i.e. when the parametrization is given by a monomial map of the form
\begin{equation}\label{varphi}
\begin{array}{cccc}
\varphi:&\P^1_\K&\to&\P^2_\K\\
&(t_0:t_1)&\mapsto&\big(t_0^d:t_0^{d-u}t_1^u:t_1^d\big).
\end{array}
\end{equation}
Here, $\P^i_\K,\,i=1,2,$ denotes the projective space of dimension $i$ over an arbitrary field $\K,$ and $\gcd(d,u)=1.$ In this case, it is easy to see that the defining polynomial of the rational curve defined by \eqref{varphi} is  $X_1^d-X_0^{d-u}X_2^u,$ which is one of the elements in the Rees Algebra of the parametrization. However,  very little seems to be known about  other nontrivial elements of the Rees algebras of monomial curves. 
\par To show how our results work, we will exhibit them with detail on a particular case.  Set  $R=\K[T_0,T_1]$ and $\S=R[X_0,X_1,X_2]=\K[T_0,T_1,X_0,X_1,X_2]$, where $T_i,\,X_j$ are new variables $i=0,1,\,j=0,1,2,$ and denote with $\X=\{X_0,\,X_1\},\,\T=\{T_0,\,T_1,\,T_2\}$ for short.
Set now $d=10,\, u=3,$ so \eqref{varphi} becomes
$$
\begin{array}{ccc}
\P^1&\to &\P^2\\
(t_0:t_1)&\mapsto & (t_0^{10}:t_0^{7}t_1^3:t_1^{10}).
\end{array}
$$ 

Consider now the ideal $\I\subset R$ defined as $\I=\langle T_0^{10},\,T_0^{7}T_1^3,\,T_1^{10}\rangle.$ Its {\em Rees Algebra} is the ring
$\mbox{Rees}(\I)=\oplus_{n\geq0} \I^n\,Z^n,$ where $Z$ is a new variable. To study this ring in a more down-to-earth fashion, we consider the following epimorphism of $R$-modules:
$$
\begin{array}{cclcc}
R[\X]&\stackrel{\Phi_0}{\to}&\mbox{Rees}(\I)&\to&0\\
X_0&\mapsto&ZT_0^{10}\\
X_1&\mapsto&ZT_0^{7}T_1^3\\
X_2&\mapsto&ZT_1^{10}.
\end{array}
$$
The kernel of $\Phi_0$ is what it is known as the {\em ideal of moving curves} which follow $\varphi$, and the search for elements of minimal bi-degree in this ideal has been in the core of the so-called {\em method of moving curves for implicitization} studied in the nineties. Note that if we consider $\mbox{Rees}(\I)$ as a finitely generated $S$-module via this map, and declare that $\mbox{bideg}(Z)=(-10,1),\,\mbox{bideg}(T_i)=(1,0)$ and $\mbox{bideg}(X_j)=(0,1)$ for $i=0,1,\,j=0,\,1,\,2$ then $\Phi_0$ is  a bihomogeneous (of bidegree $(0,0)$)  $\S$-linear map.  
\par
For $ (a,b)\in \mathbb Z^2$ denote  with $\S(a,b) $ the twisted bigraded free module for which the bihomogeneous component of bidegree $(k,l) $ is defined as $\S(a,b)_{(k,l)}=\S_{(a+k,b+l)}$.
Our first main result of this paper, Theorem \ref{ttt}, states that the minimal bigraded free resolution of this module is the following
$$
0\to F_3 \stackrel{\Phi_3}{\to} F_2  \stackrel{\Phi_2}{\to} F_1  \stackrel{\Phi_1}{\to} F_0 \stackrel{\Phi_0}{\to} \mbox{Rees}(\I)\to 0,
$$
for suitable maps $\Phi_1,\,\Phi_2,\,\Phi_3,$ where
\[\begin{array}{ccl}
F_0&=& \S(0,0)\\
F_1&=& \S(0,-10)\oplus \S(-1,-3)\oplus \S(-1,-7)\oplus \S(-2,-4)\oplus \S(-3,-1)\oplus\S(-4,-2)\oplus \S(-7,-1),
\\
F_2&=& \S(-1,-10)^2\oplus \S(-2,-7)^2\oplus \S(-3,-4)^2\oplus \S(-4,-3)^2\oplus \S(-7,-2)^2,
\\
F_3&=&\S(-2,-10)\oplus \S(-3,-7)\oplus \S(-4,-4)\oplus \S(-7,-3). \end{array}
\]
which shows in particular that $\mbox{ker}(\Phi_0)$ has a minimal set of generators of $7$ elements. Moreover, we can make explicit the elements in each of the maps above. For instance,  via \eqref{fgen} one can compute the following $7$ elements in the kernel:
\begin{equation}\label{103}
\begin{array}{ccl}
F_{1,7}(\T,\X)&=&T_0^7X_2-T_1^7X_1\\
F_{2,4}(\T,\X)&=&T_0^4X_0X_2-T_1^4X_1^2\\
F_{3,3}(\T,\X)&=&T_0^3X_1-T_1^3X_0\\
F_{4,2}(\T,\X)&=& T_0^2X_1^4-T_1^2X_0^3X_2\\
F_{5,1}(\T,\X)&=& T_0X_1^7-T_1X_0^5X_2^2\\
F_{6,1}(\T,\X)&=& T_0X_0^2X_2-T_1X_1^3\\
F_{7,0}(\T,\X)&=& X_0^7X_2^3-X_1^{10}.
\end{array}
\end{equation}
By Theorem \ref{mgenerators} this family turns out to be a set of minimal generators of $\mbox{ker}(\Phi_0),$ and a reduced Gr\"obner basis of this ideal with respect to the lexicographic monomial order with  $X_2\prec X_1\prec X_0\prec T_1\prec T_0.$
Moreover, by denoting with $\{\e_1,\ldots, \e_7\}$ the canonical basis of $\S^7,$ where the vector $\e_i$ is associated to $F_{i,b_i}(\T,\X)$ above, $i=1,\ldots, 7,$ by Theorem \ref{phi1} and \eqref{2en1},\,\eqref{rrest}, we have that the following $10$ syzygies generate minimally $\mbox{ker}(\Phi_1)\subset\S^7$, and are a Gr\"obner basis of this submodule with respect to a suitable monomial order:
\[\begin{array}{l}
\bs_{1,2}=X_0\e_1-T_0^3\e_2-T_1^4X_1\e_3   \\
\bs_{1,3}=X_1\e_1-T_0^4X_2\e_3-T_1^3\e_2 \\
\bs_{2,3}=X_1\e_2-T_0X_0X_2\e_3-T_1^3\e_6\\
\bs_{2,6}=X_0\e_2-T_0^3\e_6-T_1X_1^2\e_3\\
\bs_{3,4}=X_1^3\e_3-T_0\e_4-T_1^2X_0\e_6 \\
\bs_{3,6}=X_0^2X_2\e_3-T_0^2X_1\e_6-T_1\e_4 \\
\bs_{4,5}=X_1^3\e_4-T_0\e_5-T_1X_0^3X_2\e_6 \\
\bs_{4,6}=X_0^2X_2\e_4-T_0X_1^4\e_6-T_1\e_5\\
\bs_{5,6}= X_0^2X_2\e_5-X_1^7\e_6+T_1\e_7\\
\bs_{6,7}=X_0^5X_2^2\e_6-T_0\e_7+X_1^3\e_5.
\end{array}
\]
We can also compute the minimal set of generators of  $\mbox{ker}(\Phi_2) \subset \S^{10}$ via Proposition \ref{aiecomae}. They have the form
$$
\begin{array}{l}
\bs_{1,2,3}=X_1\e_{1,2}-X_0\e_{1,3}+T_0^3\e_{2,3}-T_1^3\e_{2,6},\\
\bs_{2,3,6}=X_0\e_{2,3}-X_1\e_{2,6}+T_0\e_{3,6}-T_1\e_{3,4},\\
\bs_{3,6,3}=X_0^2X_2\e_{3,4}-X_1^3\e_{3,6}+T_0\e_{4,6}-T_1\e_{4,5},\\
\bs_{4,5,6}=X_0^2X_2\e_{4,5}-X_1^3\e_{4,6}+T_0\e_{5,6}-T_1\e_{5,7}.
\end{array}
$$
where $\{\e_{i,j}\}_{i, j}$ denotes the canonical basis of $\S^{10},$ indexed by the rules given in \eqref{ee}. Theorem \ref{phi2} then states that this not only a set of minimal generators of this submodule, but also a Gr\"obner basis of it.
\par
We will see in the text that to make the bidegrees  explicit in full detail,  we must not only consider the classical Euclidean Remainder Sequence applied to  $(d,u),$ but also some sort of {\em Slow Euclidean Remainder Sequence} (SERS) which is worked out in detail in Section \ref{s2}. For instance, the standard Euclidean Remainder Sequence associated to $(10,3)$ is  
$$\begin{array}{ccl}
7&=&2\cdot 3+1\\
3&=&3\cdot 1+0,
\end{array}
$$
and from these numbers one can already read the exponents appearing in the minimal resolution of $\mbox{Rees}(\I)$ (see \eqref{uno} and \eqref{q} in Section \ref{setup}). To work out the generators in each step, one should look at the numbers appearing in the  SERS, and also at the {\em B\'ezout type} identities associated to them. For  $(10,3),$ its SERS  can be recovered from
$$\begin{array}{ccl}
7&=&2\cdot 3+1\\
4&=&1\cdot 3+1\\
3&=&2\cdot 1+1\\
2&=&1\cdot 1+1\\
1&=& 1\cdot 1+0.
\end{array}
$$
 Very little seems to be known about the description of minimal generators in the resolution of the Rees Algebra of monomial parametrizations in general. In \cite{MS10}, an explicit set of generators of $\mbox{ker}(\Phi_0)$ is found in the case of an affine monomial curve in a four-dimensional space, with some strong conditions on the exponents of the parametrization.  Some results have also been obtained for square-free monomial maps, see for instance \cite{vil08, GRV09}, but the general picture yet seems to be unknown. In this sense,
our results can be regarded as another step towards understanding the combinatorics of the Rees Algebra of monomial curves.
\par  It should be mentioned, however, that $\mbox{ker}(\Phi_0)$  is an example of a codimension $2$ lattice ideal, as presented in \cite{PS98}. In that paper, a minimal resolution of these ideals is given in geometric terms, by using lattice free polytopes built from the monomial map $\Phi_0.$  In particular, the fact that the resolution of $\mbox{Rees}(\I)$ has length $3$ in this case is a direct consequence of Theorem $2.34$ in \cite{PS98}.  However, our approach is completely different in the sense that no geometry is involved in our calculations at all, and we can give the whole resolution by applying very simple arithmetics on the initial data $(d,u).$
\par
The paper is organized as follows: in Section \ref{setup} we properly state the basic notation and main results. Then we move to Section \ref{s2}, where we study properties of Slow Extended Euclidean Remainder Sequences which will be useful for the proof of the main results. In Section \ref{GB} we recall basic facts and properties of Gr\"obner bases of submodules of $\S^m$ and syzygies. Theorems \ref{mgenerators}, \ref{phi1} and \ref{phi2} are proven in Sections \ref{minimal}, \ref{minimal1} and \ref{minimal2} respectively. 

In Section \ref{adj} we turn into the study of geometric elements associated to the parametric monomial curve, and a connection between elements  of $\T$-degree $1$ in $\mbox{ker}(\Phi_0),$ and {\em pencils of adjoints} associated to the curve $\cC_{\mu,d}\subset\P^2_\K,$ which is the image of $\varphi$ in \eqref{varphi}. This connection has been observed already by Cox  in \cite{cox08}, and some conjectures were posted at that time. In Theorem \ref{adjj}, we compute explicitly the dimension of the $\K$-vector space of those forms in   $\mbox{ker}(\Phi_0)_{(1,\ell)}$ which happen to be elements of $\mbox{Adj}_\ell(\cC_{\mu,d}),$ the $\K$-vector space of pencils of adjoints of $\cC_{\mu,d}$ having degree $\ell,$  and  we measure how different they are  by computing explicitly
$\dim_\K\left(\mbox{\rm ker}(\Phi_0)_{1,\ell}\, /\,\mbox{\rm Adj}_\ell(\cC_{\mu,d})\cap\mbox{\rm ker}(\Phi_0)_{1,\ell} \right)$ for $\ell\geq d-2$ in
Theorem \ref{kkk}.
The paper concludes with some further examples, the last of them showing that the bounds given in Theorem \ref{kkk} in Section \ref{examples}.

\smallskip
\begin{ack}
We are grateful to Eduardo Casas-Alvero for several discussions on blow-ups and adjoint curves,  to David Cox for having posed us interesting questions on a preliminary version of this draft, and to the anonymous referees for very useful suggestions for improving the presentation of our work. All our computations and experiments were done with the aid of the softwares {\tt Macaulay 2} \cite{mac} and {\tt Mathematica} \cite{math}.

\end{ack}

\section{Statement of the Main Results}\label{setup}
With notation as above, set $u,\,d$ be  positive integers with $u<\frac{d}2$ and  $\gcd(u,d)=1.$ Consider the homogeneous ideal
$\I=\langle T_0^d,\,T_0^{d-u}T_1^u,\,T_1^d\rangle\subset\R,$ and set
$\mbox{Rees}(\I)=\oplus_{n\geq0}\I^nZ^n$ for the Rees Algebra associated to $\I$.  There is an epimorphism of $\K$-algebras defined by
\begin{equation}\label{presentacion}
\begin{array}{cclcc}
R[\X]&\stackrel{\Phi_0}{\to}&\mbox{Rees}(\I)&\to&0\\
T_0&\mapsto&T_0\\
T_1&\mapsto&T_1\\
X_0&\mapsto&ZT_0^d\\
X_1&\mapsto&ZT_0^{d-u}T_1^u\\
X_2&\mapsto&ZT_1^d.
\end{array}
\end{equation}

We consider $\mbox{Rees}(\I)$ as a finitely generated $S$-module via \eqref{presentacion}. If we declare that $\mbox{bideg}(Z)=(-d,1),\,\mbox{bideg}(T_i)=(1,0)$ and $\mbox{bideg}(X_j)=(0,1)$ for $i=0,1,\,j=0,\,1,\,2$ then $\Phi_0$ is  a bihomogeneous (of bidegree $(0,0)$)  $\S$-linear map. The main goal of this article is to present a minimal bigraded free resolution of $\mbox{Rees}(\I)$.
Indeed, if we do not keep track of the graduation, we will show that such a minimal resolution is of the form
\begin{equation}\label{uno}
0\to \S^{q-1}\to \S^{2q} \to \S^{q+2} \to \S \stackrel{\Phi_0}{\to} \mbox{Rees}(\I)\to 0,
\end{equation}
for a suitable positive integer $q$ to be determined in the sequel.
\par
Our main result is that by performing some simple arithmetics on the pair $(d,u)$ we can actually make explicit the whole minimal resolution of $\mbox{Rees}(\I)$ without the need of any geometrical or homological construction as it was done for instance in \cite{PS98}. To  compute  the number $q$ appearing in \eqref{uno}, we proceed as follows: consider the standard {\em Euclidean Remainder Sequence}  $\{a_n\}_{n=0,\ldots, p},$ $\{q_{m}\}_{m=1,\ldots, p-1},$  associated to the data $(d,u)$ which is defined as follows:
$a_0=d-u,\,a_1=u.$
And for $1\leq i\leq p-1$ we write $a_{i-1}=q_{i}a_{i}+a_{i+1},$ with $q_i,\,a_{i+1}\in\Z,\,0\leq a_{i+1}<a_{i}.$
The number $p=p(d,u)$ is such that $a_p=0.$  The exponent $q$ in \eqref{uno} is now defined as 
\begin{equation}\label{q}
q=\sum_{m=1}^{p-1}q_m.
\end{equation}
 To precise all the other maps in the resolution,  we have to take a close look at the Euclidean Algorithm, and consider the {\em Slow Euclidean Remainder Sequence} (SERS) associated to  $(d,u),$ which are pairs of nonegative integers $\{(b_n,\,c_n)\}_{n=1,\ldots, q+1}$  defined recursively as follows:
$b_1=d-u,\,c_1=u,$ and for $0\leq n\leq q,$ the set $\{b_{n+1},\,c_{n+1}\}$ is equal to $\{b_n-c_n,\,c_n\},$ sorted in such a way that
$b_{n+1}\geq c_{n+1}.$
\par
It is easy to see that  the SERS can be regarded as a way of performing the standard Euclidean Remainder Sequence without making any divisions.   We  will show in Section \ref{s2}  that, for any $n=1,\ldots, q+1,$  there is a standard way of writing
\begin{equation}\label{bezz}
\sigma_n u+\tau_n(d-u)=b_n,
\end{equation}  with $\sigma_n,\,\tau_n\in\Z,\, |\sigma_n|< d-u,\,|\tau_n|<u$ in the same way one unravels the Euclidean Remainder sequence to produce  B\'ezout identities associated to $u$ and $d-u.$
\par
For  $\ell=0,\,1,\dots ,p-1$, set $m_\ell=1+\sum_{j=1}^{\ell}q_j.$ Also set $m_{p}:=q+2$. For instance, in the case $(d,u)=(10, 3)$ of the introduction, we have
$m_0=1,\ m_1=3,\ m_2=6,$ and $m_3=7.$
Given $n\in\{1,\ldots, q\},$ we define $\ell(n)$ as the unique $\ell$ such that $m_{\ell-1} \leq n < m_\ell.$
 One of the main results of this paper is the following:
\begin{theorem}\label{ttt}
The minimal bigraded free resolution of $\mbox{\em Rees}(\I)$ is:
\begin{equation}\label{ddos}
\begin{array}{l}
0\to  \displaystyle {\oplus_{n=1}^{q-1}}\S(-(b_n, |\sigma_{n}-\tau_{n}|+ 2|\sigma_{m_{\ell(n)}}-\tau_{m_{\ell(n)}}| )) \to\\ \\ \stackrel{\Phi_3}{\to}
 \displaystyle {\oplus_{n=1}^{q}}\S(-(b_n, |\sigma_{n}-\tau_{n}|+ |\sigma_{m_{\ell(n)}}-\tau_{m_{\ell(n)}}| )) ^2
 \stackrel{\Phi_2}{\to} \displaystyle{\oplus_{n=1}^{q+2}}\S(-(b_n, |\sigma_n-\tau_n|))\to \\ \\
\stackrel{\Phi_1}{\to} \S \stackrel{\Phi_0}{\to} \mbox{\em Rees}(\I)\to 0.
 \end{array}
\end{equation}
\end{theorem}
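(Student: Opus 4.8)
The plan is to realize the resolution \eqref{ddos} by taking, as the columns of the three differentials $\Phi_1,\Phi_2,\Phi_3$, the explicit families constructed — and shown to be minimal Gr\"obner bases — in the body of the paper, and then to check the two points those results do not directly give: exactness at the last step, and the precise bigraded twists. Concretely, I would let $\Phi_1$ be the map whose $q+2$ columns are the binomials $F_{n,b_n}(\T,\X)$ of Theorem \ref{mgenerators} (displayed in \eqref{103} for $(d,u)=(10,3)$), $\Phi_2$ the map whose $2q$ columns are the first syzygies $\bs_{i,j}$ of Theorem \ref{phi1}, and $\Phi_3$ the map whose $q-1$ columns are the second syzygies $\bs_{i,j,k}$ of Proposition~\ref{aiecomae} and Theorem~\ref{phi2}. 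Each of these families is a set of syzygies of the previous one, so $\Phi_i\circ\Phi_{i+1}=0$ for $i=0,1,2$ and we have a complex; moreover the ``kernel'' assertions of Theorems \ref{mgenerators}, \ref{phi1} and \ref{phi2} state precisely that $\operatorname{im}\Phi_1=\ker\Phi_0$, $\operatorname{im}\Phi_2=\ker\Phi_1$ and $\operatorname{im}\Phi_3=\ker\Phi_2$, so the complex is exact at $\S$, at $F_1$ and at $F_2$. Minimality is then automatic: once exactness is known those kernels are the syzygy modules of a free resolution of $\mbox{Rees}(\I)$, and the same theorems assert that the families generate them \emph{minimally} — equivalently, no column of $\Phi_1,\Phi_2,\Phi_3$ has a unit entry.

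The one ingredient not handed to us by the cited results is exactness at $F_3$, i.e. injectivity of $\Phi_3$, and I expect this to be the main obstacle. My preferred route: since the columns of $\Phi_1,\Phi_2,\Phi_3$ minimally generate $\ker\Phi_0,\ker\Phi_1,\ker\Phi_2$ respectively and the complex is exact at $\S$, $F_1$, $F_2$, a short induction — using that a minimal surjection from a free module onto a fixed module is unique up to isomorphism — identifies $F_0,F_1,F_2,F_3$ together with $\Phi_0,\Phi_1,\Phi_2,\Phi_3$ with the first four terms of the minimal bigraded free resolution of $\mbox{Rees}(\I)$. Hence $\Phi_3$ is injective if and only if $\operatorname{pd}_{\S}\mbox{Rees}(\I)\le 3$; and this holds because $\ker\Phi_0$ is the codimension-$2$ lattice ideal attached to \eqref{presentacion}, for which \cite[Theorem~2.34]{PS98} gives $\operatorname{pd}_{\S}\mbox{Rees}(\I)=3$ (as recalled in the introduction). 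A self-contained alternative is to apply Schreyer's syzygy theorem to the Gr\"obner basis $\{\bs_{i,j,k}\}$ of $\ker\Phi_2$ from Theorem~\ref{phi2}: one checks that its leading terms occupy essentially disjoint positions of $F_2$, so that every $S$-pair reduces to zero without producing a new generator, which forces $\ker\Phi_3=0$ directly. Carrying out that overlap bookkeeping — organized along the blocks of the SERS — is where the genuine work would lie.

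Finally, I would read off the twists. Since $\Phi_0$ has bidegree $(0,0)$ for the declared weights of $Z,T_i,X_j$, every column of $\Phi_i$ is bihomogeneous, of bidegree equal to minus the twist of the corresponding summand. For $n\le q+1$, matching the $T_0$-, $T_1$- and $Z$-exponents on the two monomials of $F_{n,b_n}(\T,\X)$ under $\Phi_0$ forces its $T$-degree to be the SERS entry $b_n$ and, via the B\'ezout identity \eqref{bezz} $\sigma_n u+\tau_n(d-u)=b_n$, its $X$-degree to be $|\sigma_n-\tau_n|$; the exceptional column $n=q+2$ is the implicit equation $X_0^{d-u}X_2^u-X_1^d$ of bidegree $(0,d)$, consistent with $b_{q+2}=0$ and $|\sigma_{q+2}-\tau_{q+2}|=d$. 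This yields the $F_1$ of \eqref{ddos}. The same bookkeeping, applied to the explicitly displayed syzygies of Theorem~\ref{phi1} and of Proposition~\ref{aiecomae} and Theorem~\ref{phi2}, shows that their coefficients differ from those of the $F_{n,b_n}$ by monomials whose $X$-degree is governed by the block endpoint $m_{\ell(n)}$: the two first syzygies sitting over an index $n$ acquire bidegree $\bigl(b_n,\ |\sigma_n-\tau_n|+|\sigma_{m_{\ell(n)}}-\tau_{m_{\ell(n)}}|\bigr)$, which is the $F_2$ of \eqref{ddos}, and the second syzygy over $n$ acquires $\bigl(b_n,\ |\sigma_n-\tau_n|+2|\sigma_{m_{\ell(n)}}-\tau_{m_{\ell(n)}}|\bigr)$ for $1\le n\le q-1$, which is the $F_3$ of \eqref{ddos}. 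Assembling these computations gives the resolution \eqref{ddos}.
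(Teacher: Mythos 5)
Your proposal is correct and follows essentially the same route as the paper: cite Theorems~\ref{mgenerators}, \ref{phi1}, \ref{phi2} for the explicit minimal generating sets of $\ker\Phi_0$, $\ker\Phi_1$, $\ker\Phi_2$, and then read off the twists from the bidegrees of the $F_{n,b_n}$, $\bs_{n,k}$, and $\bs_{n,\rho(n),\ell(n)}$ (the paper invokes CLO98, Ch.~6, Prop.~3.10 for the ``minimal generators at each step give a minimal resolution'' fact that you also use).

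One small correction: you flag exactness at $F_3$ — injectivity of $\Phi_3$ — as the one ingredient ``not handed to us by the cited results'' and propose two routes to supply it, but it is in fact handed to you: the last clause of Theorem~\ref{phi2} asserts that $\cF_2$ is $\S$-linearly independent, which \emph{is} the statement $\ker\Phi_3=0$. Your second, ``self-contained'' route — apply Schreyer's theorem to the Gr\"obner basis $\cF_2$ and observe that the leading terms sit in pairwise distinct coordinates, so all $S$-pairs vanish — is precisely the argument the paper runs inside the proof of Theorem~\ref{phi2} to establish that clause, so no extra work is needed. Your first route (projective dimension $3$ via Peeva--Sturmfels) is a legitimate alternative that the paper mentions in the introduction but does not rely on. Your bidegree bookkeeping is correct throughout, including the factor of $2$ in the $F_3$ twists $\bigl(b_n, |\sigma_n-\tau_n| + 2|\sigma_{m_{\ell(n)}}-\tau_{m_{\ell(n)}}|\bigr)$, which agrees with the displayed $(10,3)$ example (indeed the paper's own display \eqref{consistent2} and the subsequent declaration of $\bdeg(\e_{n,\rho(n),\ell(n)})$ appear to drop that factor of $2$; the theorem statement has it right).
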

The proof of this Theorem follows straightforwardly from Theorems \ref{mgenerators}, \ref{phi1} and \ref{phi2} below, where we also make the maps $\Phi_i,\,i=1,2,3,$ explicit. This is due to the well-known fact that knowing minimal generators of each of the syzygy modules leads to a minimal resolution of $\mbox{Rees}(\I),$  see for instance \cite[Chapter 6, Proposition 3.10]{CLO98}.  As a consequence of this result,  one
 can compute  the whole list of  bigraded Betti numbers of $\mbox{Rees}(\I) $ in terms of the SERS.

It should not be surprising to have numbers appearing from the Euclidean sequence between $d$ and $u$ in the resolution of $\mbox{Rees}(\I),$ as it is well-known that  the projective scheme defined by the Rees Algebra of $\I$ is the blowing-up of the spectrum of $\K[T_0,T_1]$ along the subscheme defined by this ideal, and hence the multiplicities of all the points of the monomial curve should play a role in its description. As it is shown in \cite[Theorem 8.4.12]{BK86}, to compute the multiplicity sequence of a monomial plane curve singularity one must deal with Euclidean sequences involving the exponents appearing in the monomial expansion of the parametrization.  Theorem \ref{ttt} essentially states that a finer algorithm than the classical Euclidean remainder is needed in order to get a full understanding of $\mbox{Rees}(\I).$

The SERS will also allow us to compute the maps $\Phi_j,\,j=1,\,2,\, 3$ in  \eqref{ddos} as follows.  We start by setting, using the notation introduced in \eqref{bezz}, for $n=1,\ldots, q+1,$
\begin{equation}\label{fgen}
F_{n, b_n}(\T,\X)=\left\{\begin{array}{rcl}
T_0^{b_n}X_0^{-\sigma_n}X_2^{\tau_n}-T_1^{b_n}X_1^{\tau_n-\sigma_n}& \,\mbox{if}&\, \sigma_n\leq0,\\
T_0^{b_n}X_1^{\sigma_n-\tau_n}-T_1^{b_n}X_0^{\sigma_n}X_2^{-\tau_n}& \,\mbox{if}&\, \sigma_n>0.
\end{array}
\right.
\end{equation}

We define also $b_{q+2}=0$ and
$$F_{q+2,0}(\T,\X)=\left\{\begin{array}{ccl}
X_0^{d-u}X_2^u-X_1^d&\,\mbox{if}\,&\sigma_q>0\\
X_1^d-X_0^{d-u}X_2^u&\,\mbox{if}\,&\sigma_q< 0.
\end{array}\right.$$
 By using \eqref{bezz}, we easily verify that
$$F_{n,b_n}(T_0,T_1,ZT_0^d,\,ZT_0^{d-u}T_1^u, ZT_1^d)=0,\ \ n=1,\ldots, q+2,$$ i.e. all these elements belong to $\mbox{ker}(\Phi_0)$. Set $\cF_0:=\{F_{n, b_n}(\T,\X)\}_{n=1,\ldots, q+2}.$ We will see in Remark \ref{seqq} that the elements of $\cF_0$ can be defined recursively, without having to compute all  the B\'ezout's identities \eqref{bezz}. In fact,  one may regard this sequence as a generalization of the process we have set in \cite{CD13b} to produce minimal elements in $\mbox{ker}(\Phi_0).$

Recall that for a given monomial order  in $\S^k$, a {\em Gr\"obner basis} of an $\S$- submodule $M\subset\S$ is a set of generators of $M$ such that their leading terms with respect to this monomial order generate the {\em initial module} $\mbox{lt}(M),$ see Section \ref{GB} for more background on these concepts. We will denote with $\prec_l$ the lexicographic order on the monomials of $\S$ such that $X_2\prec_l X_1\prec_l X_0\prec_l T_1\prec_l T_0$ if $\sigma_q>0,$ or $X_2\prec_l X_0\prec_l X_1\prec_l T_1\prec_l T_0$ if $\sigma_q\leq0.$
\par
Consider the free module $\displaystyle {\bigoplus_{n=1}^{q+2}}\S(-(b_n, |\sigma_n-\tau_n|))$ with basis $\e_1,\dots,\e_{q+2}$ and  $\bdeg(\e_n)=(b_n, | \sigma_n-\tau_n|)$. We will identify $\e_n$ with $F_{n,b_n}(\T,\X),$ note that straightforwardly we have $\bdeg(F_{n,b_n}(\T,\X))=(b_n, | \sigma_n-\tau_n|)$ as well. The following result will be proven at the end of Section \ref{minimal}.
\begin{theorem}\label{mgenerators}
The map $\Phi_1$ in \eqref{ddos} can be made explicit as follows:
\[\begin{array}{ccc}
 \displaystyle{\bigoplus_{n=1}^{q+2}}\S(-(b_n, |\sigma_n-\tau_n|))&\stackrel{\Phi_1}{\longrightarrow}& \S \\
\e_n&\mapsto&F_{n,b_n}(\T,\X).
\end{array}
\]
Moreover, $\cF_0$ is a minimal set of generators of $\mbox{\rm ker}(\Phi_0)$, and  a reduced Gr\"obner basis of this ideal with respect to $\prec_l.$
\end{theorem}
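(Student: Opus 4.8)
The statement that $\Phi_1$ is the map $\e_n\mapsto F_{n,b_n}$ is, once unwound, exactly the assertion that $\cF_0$ generates $\mbox{ker}(\Phi_0)$ (exactness of the resolution at $F_0$); since the containment $\cF_0\subseteq\mbox{ker}(\Phi_0)$ has already been established from the B\'ezout identities \eqref{bezz}, the three things left to prove are that $\cF_0$ generates $\mbox{ker}(\Phi_0)$, that it is a reduced Gr\"obner basis for $\prec_l$, and that it is a minimal generating set. I would begin with the mechanical step of reading off leading terms: since $T_0\succ_l T_1$ and $b_n\ge 1$ for $n\le q+1$, formula \eqref{fgen} gives $\mbox{lt}(F_{n,b_n})=T_0^{b_n}X_0^{-\sigma_n}X_2^{\tau_n}$ when $\sigma_n\le 0$ and $T_0^{b_n}X_1^{\sigma_n-\tau_n}$ when $\sigma_n>0$, while $\mbox{lt}(F_{q+2,0})$ is the pure $X$-monomial $X_0^{d-u}X_2^u$ or $X_1^d$ singled out by the sign of $\sigma_q$. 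All exponents occurring here are the nonnegative quantities controlled by the SERS together with the bounds $|\sigma_n|<d-u$, $|\tau_n|<u$.

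The core of the argument is a comparison of standard monomials. Let $B\subset\S$ be the set of monomials divisible by no $\mbox{lt}(F_{n,b_n})$, i.e. the standard monomials of the monomial ideal $\langle\mbox{lt}(F_{1,b_1}),\ldots,\mbox{lt}(F_{q+2,0})\rangle$. On the one hand $B$ spans $\S/\langle\cF_0\rangle$, which surjects onto $\mbox{Rees}(\I)$. On the other hand, $\mbox{Rees}(\I)$ is generated as a $\K$-algebra by the $\Phi_0$-images of the five variables, so its natural monomial $\K$-basis $\mathcal M=\{T_0^aT_1^bZ^n\,:\,T_0^aT_1^b\in\I^n\}$ consists of $\Phi_0$-images of monomials of $\S$, and any such monomial can be rewritten modulo $\cF_0$---using that each $F_{n,b_n}$ is a binomial lying in $\mbox{ker}(\Phi_0)$ and that $\prec_l$ is a well order---into a $B$-monomial with the same $\Phi_0$-image. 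Hence $\Phi_0$ restricts to a surjection $B\to\mathcal M$. The crucial point to prove is that this restriction is \emph{injective}: for each triple $(a,b,n)$ with $T_0^aT_1^b\in\I^n$ there is a unique $B$-monomial mapping to $T_0^aT_1^bZ^n$. Granting this, the spanning set $B$ of $\S/\langle\cF_0\rangle$ is carried bijectively onto the $\K$-basis $\mathcal M$ of $\mbox{Rees}(\I)$, so $\S/\langle\cF_0\rangle\to\mbox{Rees}(\I)$ is an isomorphism and $\langle\cF_0\rangle=\mbox{ker}(\Phi_0)$; and since $B$ is then simultaneously the monomial basis of $\S/\langle\mbox{lt}(\cF_0)\rangle$ and of $\S/\mbox{ker}(\Phi_0)$, equality of their bigraded Hilbert functions forces $\langle\mbox{lt}(\cF_0)\rangle=\mbox{lt}(\mbox{ker}(\Phi_0))$, i.e. $\cF_0$ is a Gr\"obner basis for $\prec_l$. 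An equivalent route replaces this injectivity by a direct Buchberger check: every $S$-polynomial $S(F_{n,b_n},F_{m,b_m})$ is again a binomial, and the reduction to zero of $S(F_{n,b_n},F_{n+1,b_{n+1}})$ retraces precisely one slow-Euclidean step $\{b_{n+1},c_{n+1}\}=\{b_n-c_n,c_n\}$, with $F_{n+2,b_{n+2}}$---or $F_{q+2,0}$ near the tail---performing the reduction and the non-consecutive pairs reducing for degree reasons; but that route still needs the same Hilbert-function comparison to deduce $\langle\cF_0\rangle=\mbox{ker}(\Phi_0)$.

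Reducedness is then a short check. The trailing monomial of each $F_{n,b_n}$ carries $T_1$ but no $T_0$, whereas $\mbox{lt}(F_{1,b_1}),\ldots,\mbox{lt}(F_{q+1,b_{q+1}})$ all carry $T_0$; and every trailing monomial has $X$-degree $<d$ while $\mbox{lt}(F_{q+2,0})$ has $X$-degree $d$. So no $\mbox{lt}(F_{m,b_m})$ divides any trailing monomial, and the only nontrivial comparison is that $\mbox{lt}(F_{m,b_m})\nmid\mbox{lt}(F_{n,b_n})$ for distinct $m,n\le q+1$, which follows from the monotonicity of $\{b_n\}$ and the inequalities on $(\sigma_n,\tau_n)$ proved in Section \ref{s2}. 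For minimality one uses that the bigraded degrees $\bdeg(F_{n,b_n})=(b_n,|\sigma_n-\tau_n|)$ are pairwise distinct (again an output of the SERS analysis of Section \ref{s2}): since $\cF_0$ generates and these degrees differ, a minimal homogeneous generating subset of $\cF_0$ exists, and it is all of $\cF_0$ unless some $F_{n,b_n}$ lies in the ideal generated by the others, which by the distinctness of degrees forces $F_{n,b_n}\in\mathfrak m\,\mbox{ker}(\Phi_0)$ with $\mathfrak m=\langle T_0,T_1,X_0,X_1,X_2\rangle$; choosing a representation $F_{n,b_n}=\sum_m G_m F_{m,b_m}$ with all $G_m\in\mathfrak m$ having $\prec_l$-minimal leading term, and reducing via the Gr\"obner basis property (the standard rewriting behind Buchberger's criterion), leads to $\mbox{lt}(F_{n,b_n})$ being a proper multiple of some $\mbox{lt}(F_{m,b_m})$, contradicting reducedness. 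Hence the $q+2$ elements of $\cF_0$ form a minimal generating set, which is precisely the content of the stated $\Phi_1$.

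The step I expect to be the real obstacle is the injectivity of $\Phi_0$ on $B$ (equivalently, the bookkeeping in the Buchberger reductions). This is the point where the two arithmetic gadgets attached to $(d,u)$ must be matched against the integer arithmetic of the monomial map: the classical Euclidean data $\{a_n\}$, $\{q_m\}$, $\{m_\ell\}$ govern which $b_n$ occur and that they are distinct, while the B\'ezout coefficients $(\sigma_n,\tau_n)$ govern the admissible $X$-exponents---because the $b_n$ decrease until the tail of the SERS, membership of $\T^{\ue}\X^{\bff}$ in $B$ amounts to bounding the $T_0$-exponent $e_0$ in terms of which of the monomials $X_0^{-\sigma_k}X_2^{\tau_k}$ (or $X_1^{\sigma_k-\tau_k}$) divide $\X^{\bff}$, together with forbidding the curve relation $X_1^d\leftrightarrow X_0^{d-u}X_2^u$. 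Inverting the system $a=e_0+df_0+(d-u)f_1$, $b=e_1+uf_1+df_2$, $n=f_0+f_1+f_2$ uniquely inside this admissible region is the delicate part, and getting the tail of the SERS right---where several consecutive $b_n$ equal $1$ and the pure-$X$ generator $F_{q+2,0}$ enters---is where care is needed. Once the identities of Section \ref{s2} are in place, the remaining verifications are routine.
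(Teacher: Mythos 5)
Your overall strategy is legitimate but genuinely different from the paper's, and it leaves the proof incomplete at precisely the point where the real work lies. The paper does not argue via a standard-monomial/Hilbert-function comparison. Instead it invokes the Eisenbud--Sturmfels result (Proposition \ref{est}) that a reduced Gr\"obner basis of $\mbox{ker}(\Phi_0)$ consists of binomials, classifies in Lemma \ref{easy} the four possible shapes of irreducible bihomogeneous binomials in $\mbox{ker}(\Phi_0)$, and in Proposition \ref{gb} pins down a generating set $\mathcal S$ of the initial ideal in terms of \emph{minimal} solutions of two linear diophantine equations. The heart of the argument is then Proposition \ref{xxovni}, an induction along the SERS proving that the exponents $(-\sigma_n,\tau_n)$ (resp.\ $(\sigma_n,-\tau_n)$) divide those minimal solutions, so that $\mbox{lt}(\cF_0)$ already generates $\mbox{lt}(\mbox{ker}(\Phi_0))$. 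That is exactly the piece of combinatorial arithmetic that your proposal defers: you explicitly say the injectivity of $\Phi_0$ on $B$ (equivalently, the full Buchberger reduction, or the uniqueness of the inverse of the system $a=e_0+df_0+(d-u)f_1$, \dots) ``is the step I expect to be the real obstacle'' and ``is the delicate part,'' and you do not carry it out. Until that is done your proof has a hole exactly where the paper's proof has its substance, so the proposal is not a complete alternative argument; it is a correct reframing of what must be proved, with the proof itself missing.

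Two smaller points. First, your reducedness check is sound and essentially the same as the paper's: the trailing monomials are either $T_1$-multiples or have $X$-degree $<d$, so none of the leading terms of $\cF_0$ divides them. Second, your minimality argument is a bit loose: mere distinctness of the bidegrees $\bdeg(F_{n,b_n})$ is not enough, since a bihomogeneous combination $F_n=\sum_j G_j F_{j,b_j}$ only requires $\bdeg(F_j)\leq\bdeg(F_n)$ componentwise, not equality. The paper establishes the sharper Proposition \ref{keyy}, which says the \emph{only} $k$ with $\bdeg(F_{k,b_k})\leq\bdeg(F_{n,b_n})$ are $k=n$ and $k=m_{\ell(n)}$; this reduces a would-be nontrivial relation to a single term $G\cdot F_{m_{\ell(n)},b_{m_{\ell(n)}}}$ and kills it by irreducibility. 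You would need that poset structure, not just distinctness, for the Nakayama step to close.
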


Related to this result, we mention \cite[Proposition 8.3]{PS98}, where it is stated that the reduced Gr\"obner basis of $\mbox{\rm ker}(\Phi_0)$ with respect to some reverse lexicographic term order is actually a minimal generating set.

To explicit  $\Phi_2,$ in Section \ref{GB} we will endow the free  module $\S^{q+2}$ with a term ordering $\prec_{l,\cF_0}$ depending on both $\cF_0$ and the term order $\prec_{l}$ on $\S.$ We will then identify $2q$ specific syzygies on the elements of $\cF_0.$
To make this more precise, for $n=1,\ldots,q,$ and $\ell=\ell(n),$ we define
 \begin{equation}\label{rho} \rho(n)=\left\{\begin{array}{ccl}
n+1&\,\mbox{ if }\,&n+1<m_\ell\\
m_{\ell+1}&\,\mbox{ if }\,&n+1=m_\ell.
\end{array}\right.\end{equation}
Observe that  we always have $b_{m_{\ell(n)}}=c_n$ and $b_{\rho(n)}=b_n-c_n$.
By computing the ${\bf S}$-polynomials
 ${\bf S}\big(F_{n,b_n}(\T,\X), F_{m_{\ell(n)}, b_{m_{\ell(n)}}}(\T,\X)\big), \,
 {\bf S}\big(F_{n,b_n}(\T,\X), F_{\rho(n), b_{\rho(n)}}(\T,\X)\big)$
 in Lemmas \ref{szg} and \ref{extra},
we obtain the following elements in $\mbox{syz}(\cF_0)\subset\S^{q+2}$, the {\em syzygy module} of the family $\cF_0$, which is isomorphic to $\mbox{ker}(\Phi_1)$:
\begin{equation}\label{2en1}
\begin{array}{ccl}
\bs_{n,\rho(n)}&=&\left\{\begin{array}{lcr}
X_0^{\sigma_{m_{\ell(n)}}}X_2^{-\tau_{m_{\ell(n)}}}\e_n-T_0^{b_{m_{\ell(n)}}}\e_{\rho(n)}-T_1^{b_{\rho(n)}}X_1^{-\sigma_n+\tau_n}\e_{m_{\ell(n)}}&\,\mbox{if}&\, \sigma_n\leq0,\\
X_1^{\tau_{m_{\ell(n)}}-\sigma_{m_{\ell(n)}}}\e_n-T_0^{b_{m_{\ell(n)}}}\e_{\rho(n)}-T_1^{b_{\rho(n)}}X_0^{\sigma_n}X_2^{-\tau_n}\e_{m_{\ell(n)}}
&\,\mbox{if}&\, \sigma_n>0,\\
\end{array}
\right.\\ \\
\bs_{n,m_{\ell(n)}}&=&\left\{\begin{array}{lcr}
X_1^{\sigma_{m_{\ell(n)}}-\tau_{m_{\ell(n)}}}\e_n-T_0^{b_{\rho(n)}}X_0^{-\sigma_n}X_2^{\tau_n}\e_{m_{\ell(n)}}-T_1^{b_{m_{\ell(n)}}}\e_{\rho(n)}
 &\,\mbox{if}&\, \sigma_n\leq0,\\
X_0^{-\sigma_{m_{\ell(n)}}}X_2^{\tau_{m_{\ell(n)}}}\e_n-T_0^{b_{\rho(n)}}X_1^{\sigma_n-\tau_n}\e_{m_{\ell(n)}}-T_1^{b_{m_{\ell(n)}}}\e_{\rho(n)}&\,\mbox{if}&\,\sigma_n>0,
\end{array}
\right.
\end{array}
\end{equation}
for $n\leq q,$
and
\begin{equation}\label{rrest}
\bs_{q+1,q+2}=\left\{\begin{array}{lcl}
X_0^{-\sigma_{q}}X_2^{\tau_{q}}\e_{q+1}-T_0\e_{q+2}-X_1^{\tau_{q+1}-\sigma_{q+1}}\e_{q} &\,\mbox{if}\,&\sigma_{q+1}\leq 0,\\
X_1^{-\sigma_q+\tau_q}\e_{q+1}-T_0\e_{q+2}-X_0^{\sigma_{q+1}}X_2^{-\tau_{q+1}}\e_{q},&\,\mbox{if}\,&\sigma_{q+1}> 0.
\end{array}
\right.
\end{equation}
Set now
$\cF_1:= \left\{\bs_{q,q+1},\,  \bs_{q+1,q+2}\right\}\cup\left\{ \bs_{n,m_{\ell(n)}},\, \bs_{n,\rho(n)}
 \right\}_{n=1,\ldots, q-1}.$
Observe that this set has  $2q$ bihomogeneous elements with
\begin{equation}\label{25252}
\begin{array}{l}
 \bdeg(\bs_{n,m_{\ell(n)}})=\bdeg(\bs_{n,\rho(n)})=(b_n, |\sigma_n-\tau_n|+|\sigma_{m_{\ell(n)}}-\tau_{m_{\ell(n)}}|)=(b_n, |\sigma_{\rho(n)}-\tau_{\rho(n)}|),\\
  \bdeg(\bs_{q,q+1})=\bdeg(\bs_{q,q+2})=(1,d)=(b_q, |\sigma_{\rho(q)}-\tau_{\rho(q)}|).
\end{array}
\end{equation}
This lead us to consider the free bigraded module
$\displaystyle {\oplus_{n=1}^{q}}\S(-(b_n, |\sigma_{\rho(n)}-\tau_{\rho(n)}|  )^2,$ having canonical basis
\begin{equation}\label{ee}
\{\e_{n,\rho(n)},\,\e_{n,m_{\ell(n)}}\}_{n=1,\ldots, q-1}\bigcup\{\e_{q,q+1}\e_{q+1,q+2}\},
\end{equation}
and declaring that $\bdeg(\e_{n,\rho(n)})=\bdeg(\e_{n,m_{\ell(n)}})=(b_n, | \sigma_{\rho(n)}-\tau_{\rho(n)}| )$, and
 $\bdeg(\e_{q,q+1})=\bdeg(\e_{q+1,q+2})=(1,d).$

\begin{theorem}\label{phi1}
The map $\Phi_2$ in \eqref{ddos} can be made explicit as follows:
\[\begin{array}{ccl}
 \displaystyle {\bigoplus_{n=1}^{q}}\S(-(b_n, |\sigma_{\rho(n)}-\tau_{\rho(n)}| )) ^2 &\stackrel{\Phi_2}{\to}&
 \displaystyle{\bigoplus_{n=1}^{q+2}}\S(-(b_n, |\sigma_n-\tau_n|)) \\
 \e_{n,k}&\mapsto& \bs_{n,k}.
 \end{array}
\]
Moreover, $\cF_1$ is a reduced Gr\"obner basis of $\mbox{\em ker}(\Phi_1)$ with respect to $\prec_{l,\cF_0},$ and a minimal set of generators of this module.
\end{theorem}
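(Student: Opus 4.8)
The plan is to deduce the whole statement from Schreyer's theorem on the first syzygies of a Gr\"obner basis, fed with the explicit leading terms of $\cF_0$ coming from Theorem \ref{mgenerators}, and then to trim the resulting (large) Schreyer basis down to $\cF_1$ by Buchberger-type elimination criteria. First I would record, directly from \eqref{fgen}, that $\mbox{lt}_{\prec_l}(F_{n,b_n})$ equals $T_0^{b_n}X_0^{-\sigma_n}X_2^{\tau_n}$ when $\sigma_n\le 0$ and $T_0^{b_n}X_1^{\sigma_n-\tau_n}$ when $\sigma_n>0$, with the analogous reading for $F_{q+2,0}$. By Schreyer's theorem, recalled in Section \ref{GB}, the family of reduced $S$-polynomial syzygies $\{\bh_{i,j}\}_{1\le i<j\le q+2}$, where $\bh_{i,j}=m_{i,j}\e_i-m_{j,i}\e_j-r_{i,j}$ with $m_{i,j}=\mbox{lcm}\big(\mbox{lt}(F_i),\mbox{lt}(F_j)\big)/\mbox{lt}(F_i)$ and $r_{i,j}$ the standard reduction tail, is a Gr\"obner basis of $\mbox{syz}(\cF_0)\cong\mbox{ker}(\Phi_1)$ for the Schreyer order $\prec_{l,\cF_0}$, and (up to the tie-breaking convention fixed in Section \ref{GB}) $\mbox{lt}_{\prec_{l,\cF_0}}(\bh_{i,j})=m_{i,j}\e_i$.

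Next I would determine which of the $\bh_{i,j}$ are actually needed. Lemmas \ref{szg} and \ref{extra} compute the $S$-polynomials for the pairs $(n,m_{\ell(n)})$ and $(n,\rho(n))$ with $n\le q$, together with the exceptional pair $(q+1,q+2)$, and identify the resulting syzygies with $\bs_{n,m_{\ell(n)}},\,\bs_{n,\rho(n)}$ of \eqref{2en1} and $\bs_{q+1,q+2}$ of \eqref{rrest}; these are precisely the $2q$ members of $\cF_1$, and in particular each of them is genuinely a syzygy on $\cF_0$. To discard every other pair $(i,j)$ I would argue by cases. If $\gcd\big(\mbox{lt}(F_i),\mbox{lt}(F_j)\big)=1$, the syzygy form of Buchberger's coprimality (product) criterion places $\bh_{i,j}$ in the submodule spanned by syzygies of strictly smaller leading term, so it is redundant. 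Since for $n\le q+1$ the monomial $\mbox{lt}(F_n)$ is $T_0^{b_n}$ times an $\X$-monomial with $b_n>0$, the non-coprime pairs are exactly those sharing a positive power of $T_0$; for each such pair I would use the recursion \eqref{rho} and the arithmetic of the SERS to produce an index $k$ lying ``between'' $i$ and $j$ with $\mbox{lt}(F_k)\mid\mbox{lcm}\big(\mbox{lt}(F_i),\mbox{lt}(F_j)\big)$ and $\bh_{i,k},\bh_{k,j}\in\cF_1$, so that the chain (LCM) criterion removes $\bh_{i,j}$. Organizing this matching --- carried out in the two regimes $\sigma_q>0$ and $\sigma_q\le 0$, which only interchange the roles of $X_0$ and $X_1$ in $\prec_l$ --- is the combinatorial core of the argument and the step I expect to be the main obstacle; it is also where the special treatment of the pair $(q+1,q+2)$ in place of $(q,q+2)$ has to be handled. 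The conclusion is that $\cF_1$ is a Gr\"obner basis of $\mbox{ker}(\Phi_1)$ for $\prec_{l,\cF_0}$.

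Granting the Gr\"obner basis property, minimality is read off the leading terms. By the previous step $\mbox{lt}_{\prec_{l,\cF_0}}(\bs_{n,k})=m_{n,k}\e_n$ for the two choices $k\in\{m_{\ell(n)},\rho(n)\}$ (and for the exceptional indices). From \eqref{2en1} the two $\e_n$-coefficients attached to a fixed $n$ are monomials in disjoint sets of $X$-variables --- one a monomial in $X_0,X_2$, the other a power of $X_1$ --- hence incomparable under divisibility, while for distinct $n$ the module components $\e_n$ differ. Therefore no leading monomial of $\cF_1$ divides another, so no member of the Gr\"obner basis $\cF_1$ is superfluous and $\cF_1$ minimally generates $\mbox{ker}(\Phi_1)$. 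Reducedness is the matching finite verification: using the explicit tails in \eqref{2en1}--\eqref{rrest} and the leading monomials from \eqref{fgen}, one checks that no monomial occurring in the tail of any $\bs_{n,k}$, in any coordinate, is divisible by the leading monomial $m_{m,k'}\e_m$ of another element, and that all leading coefficients equal $1$.

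Finally, the explicit description of $\Phi_2$ requires nothing further: since $\cF_1$ generates $\mbox{ker}(\Phi_1)=\mbox{im}(\Phi_2)$, the assignment $\e_{n,k}\mapsto\bs_{n,k}$ is a well-defined surjection onto $\mbox{ker}(\Phi_1)$, and the bidegree identities \eqref{25252} show it to be bihomogeneous of bidegree $(0,0)$ from the free module with basis \eqref{ee} displayed just before the statement. Combined with Theorem \ref{mgenerators}, this also pins down the middle portion of the resolution \eqref{ddos}.
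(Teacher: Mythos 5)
Your overall architecture (Schreyer's theorem $\to$ explicit leading terms $\to$ trim to $\cF_1$) matches the paper's, but two points need attention.

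\textbf{The trimming step.} The paper does not invoke the Buchberger coprimality or chain criteria. Instead, it fixes the first index $n$, observes that all $\mbox{lt}_{\prec_{l,\cF_0}}(\bs_{n,m})$ for $m>n$ live in the single coordinate $\e_n$, and then exhibits, using \eqref{pprimer} and the monotonicity statements of Proposition \ref{connection}-iv) and Proposition \ref{stronzo}, that among the $\X$-monomials $\mbox{lcm}\big(\mbox{lt}(F_n),\mbox{lt}(F_m)\big)/\mbox{lt}(F_n)$ there are exactly two minimal ones, attained at $m\in\{k_{1n},k_{2n}\}$. That is a direct divisibility comparison; it is stronger and more concrete than appealing to the Buchberger criteria, and it already yields the reduction to $\cF_1^*$ and then to $\cF_1$ (via Lemma \ref{abb}, which shows $\mbox{lt}(\bs_{q,q+1})\mid\mbox{lt}(\bs_{q,q+2})$). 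You explicitly flag your combinatorial step as ``the main obstacle'' you have not carried out, and you would also need to be careful: the coprimality criterion only tells you that $\bs_{i,j}$ is a Koszul syzygy, not that its Schreyer leading term $\mbox{lt}(F_j)\e_i$ is divisible by the leading term of some retained generator. So what you propose is a plausible blueprint, but the paper's concrete leading-monomial bookkeeping is what actually closes it, and you have not reproduced that.

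\textbf{The passage from minimal Gr\"obner basis to minimal generating set.} Here you assert: ``Therefore no leading monomial of $\cF_1$ divides another, so no member of the Gr\"obner basis $\cF_1$ is superfluous and $\cF_1$ minimally generates $\mbox{ker}(\Phi_1)$.'' The first half is correct (it is exactly what ``minimal Gr\"obner basis'' means), but the inference to minimal generation is invalid in general: a reduced Gr\"obner basis minimally generates the initial module, not the module itself, and these two minimal numbers of generators can differ. The paper proves minimal generation by a separate argument: it supposes some $\bs_{n,k_{in}}$ lies in the $\S$-span of the others, specializes $\T\mapsto\mathbf{0}$, and uses the explicit shapes in Lemmas \ref{szg} and \ref{extra} (in particular that $b_k=0$ only for $k=q+2$, and that the two $\e_n$-monomials are coprime) to reach a contradiction by comparing the $\e_n$-coefficients. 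That specialization argument is essential and is absent from your write-up; as stated, your proof of the minimal-generation claim does not go through.

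The remaining pieces — the explicit leading terms from \eqref{fgen}, identification of the retained Schreyer syzygies with \eqref{2en1} and \eqref{rrest} via Lemmas \ref{szg} and \ref{extra}, reducedness by inspection of the tails, and the bidegree bookkeeping \eqref{25252} making $\Phi_2$ bihomogeneous — agree with the paper's treatment.
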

To complete our descripton of the resolution in \eqref{ddos},  we have to explicit $\Phi_3.$ In Lemma \ref{extra2}, we will see that for each $n=1,\ldots, q,$ with the induced order $\prec_{l,\cF_1},$
$$\begin{array}{rcl}
\bS\big(\bs_{n,\rho(n)}, \bs_{n,m_{\ell(n)}}\big)&=&F_{\rho(n),b_{\rho(n)}}(\T,\X)\e_{m_{\ell(n)}}-F_{m_{\ell(n)},b_{m_{\ell(n)}}}(\T,\X)\,\e_{\rho(n)},\\
T_1^{b_{m_{\ell(n)}}}\bs_{n,\rho(n)}-T_0^{b_{m_{\ell(n)}}}\bs_{n,m_{\ell(n)}}&=&F_{n,b_n}(\T,\X)\e_{m_{\ell(n)}}-F_{m_{\ell(n)},b_{m_{\ell(n)}}}(\T,\X)\e_n.
\end{array}
$$
These equalities will help us, in Proposition \ref{aiecomae},  write the syzygies $\bs_{(n,\rho(n)),(n,m_{\ell(n)})}\in\mbox{syz}(\cF_1)\subset\S^{2q}$ in an explicit way. We will denote these syzygies with $\bs_{n,\rho(n),\ell(n)}$ for short.
for $n=1,\ldots q.$  We will see also that:
\begin{equation}\label{consistent2}
  \bdeg (\bs_{n,\rho(n),\ell(n)})=(b_n, |\sigma_n-\tau_n|+|\sigma_{m_{\ell(n)}}-\tau_{m_{\ell(n)}}|), \
\ n=1,\ldots q,\end{equation}
which leads us to consider the module  $\displaystyle {\oplus_{n=1}^{q-1}}\S(-(b_n, |\sigma_{n}-\tau_{n}|+ 2|\sigma_{m_{\ell(n)}}-\tau_{m_{\ell(n)}}| ))$
where we denote its canonical basis with $\{\e_{n, \rho(n),\ell(n)}\}_{n=1,\ldots, q-1},$ and declare that
$$\bdeg (\e_{(n,\rho(n),\ell(n)})=(b_n, |\sigma_n-\tau_n|+|\sigma_{m_{\ell(n)}}-\tau_{m_{\ell(n)}}|), \
\ n=1,\ldots q.$$ Set $\cF_2:=\{\bs_{n,\rho(n),\ell(n)}\}_{n=1,\ldots, q-1} .$  In Section \ref{minimal2}, we will show the following:
\begin{theorem}\label{phi2}
The map $\Phi_3$ in \eqref{ddos} can be made explicit as follows:
$$
\begin{array}{ccc}
\displaystyle {\oplus_{n=1}^{q-1}}\S(-(b_n, |\sigma_{n}-\tau_{n}|+ 2|\sigma_{m_{\ell(n)}}-\tau_{m_{\ell(n)}}| )) &\stackrel{\Phi_3}{\to}&
 \displaystyle {\oplus_{n=1}^{q}}\S(-(b_n, |\sigma_{n}-\tau_{n}|+ |\sigma_{m_{\ell(n)}}-\tau_{m_{\ell(n)}}| )) ^2\\
 \e_{n,\rho(n),\ell(n)}&\mapsto& \bs_{n,\rho(n),\ell(n)}.
 \end{array}
$$
Moreover, $\cF_2$ is a Gr\"obner basis of $\mbox{\rm ker}(\Phi_2)$ with respect to $\prec_{l,\cF_1}.$ This set is also  $S$-linearly independent (in particular, a minimal set of generators of this module).
\end{theorem}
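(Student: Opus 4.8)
The plan is to derive the theorem from Schreyer's theorem applied to the Gr\"obner basis $\cF_1$ of $\mbox{ker}(\Phi_1)$ furnished by Theorem \ref{phi1}, using the $\bS$-polynomial identities of Lemma \ref{extra2} and the explicit syzygies produced in Proposition \ref{aiecomae}, and to obtain the $\S$-linear independence of $\cF_2$ from the ``staircase'' shape of the $\bs_{n,\rho(n),\ell(n)}$. Since Proposition \ref{aiecomae} already places each $\bs_{n,\rho(n),\ell(n)}$ in $\mbox{syz}(\cF_1)=\mbox{ker}(\Phi_2)$ with the bidegree recorded in \eqref{consistent2}, the map $\Phi_3$ of the statement is well defined and bihomogeneous of bidegree $(0,0)$; what remains is to show that $\cF_2$ generates $\mbox{ker}(\Phi_2)$, that it is a Gr\"obner basis with respect to $\prec_{l,\cF_1}$, and that it is $\S$-linearly independent.

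For the first two points I would first invoke the description of the leading terms of $\cF_1$ obtained while proving Theorem \ref{phi1} in Section \ref{minimal1}: with respect to $\prec_{l,\cF_0}$ the leading terms of both $\bs_{n,\rho(n)}$ and $\bs_{n,m_{\ell(n)}}$ are supported on the $n$-th basis vector $\e_n$ of $\bigoplus_{n=1}^{q+2}\S(-(b_n,|\sigma_n-\tau_n|))$ for $n=1,\dots,q-1$, while $\mathrm{lt}(\bs_{q,q+1})$ is supported on $\e_q$ and $\mathrm{lt}(\bs_{q+1,q+2})$ on $\e_{q+1}$. Hence, among all pairs of elements of $\cF_1$, the only ones whose leading terms lie on a common basis vector --- i.e. the only pairs with a nonvanishing $\bS$-polynomial in the module sense --- are the $q-1$ pairs $\{\bs_{n,\rho(n)},\bs_{n,m_{\ell(n)}}\}$, $n=1,\dots,q-1$. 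For each of these, Lemma \ref{extra2} identifies $\bS\big(\bs_{n,\rho(n)},\bs_{n,m_{\ell(n)}}\big)=F_{\rho(n),b_{\rho(n)}}(\T,\X)\,\e_{m_{\ell(n)}}-F_{m_{\ell(n)},b_{m_{\ell(n)}}}(\T,\X)\,\e_{\rho(n)}$, a Koszul relation lying in $\mbox{ker}(\Phi_1)=\langle\cF_1\rangle$; Proposition \ref{aiecomae} exhibits its standard reduction modulo $\cF_1$ (using the second identity of Lemma \ref{extra2} and the equalities $b_{m_{\ell(n)}}=c_n$, $b_{\rho(n)}=b_n-c_n$), and the Schreyer syzygy it produces is exactly $\bs_{n,\rho(n),\ell(n)}$. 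By Schreyer's theorem, as recalled in Section \ref{GB}, the collection of these Schreyer syzygies is a Gr\"obner basis of $\mbox{syz}(\cF_1)=\mbox{ker}(\Phi_2)$ for the induced order $\prec_{l,\cF_1}$; this collection is precisely $\cF_2$, which settles simultaneously that $\cF_2$ generates $\mbox{ker}(\Phi_2)$ and that it is a Gr\"obner basis with respect to $\prec_{l,\cF_1}$.

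For the $\S$-linear independence I would use that the $2q$ basis vectors of $\S^{2q}$ come in $q+1$ \emph{levels}: level $n$ (for $n\le q-1$) is $\{\e_{n,\rho(n)},\e_{n,m_{\ell(n)}}\}$, and levels $q$, $q+1$ are $\{\e_{q,q+1}\}$, $\{\e_{q+1,q+2}\}$. Reading off the formula in Proposition \ref{aiecomae}, the syzygy $\bs_{n,\rho(n),\ell(n)}$ is supported on the two level-$n$ basis vectors, each with coefficient equal up to sign to a monomial in $X_0,X_1,X_2$, and otherwise only on basis vectors of levels $>n$ (the reduction terms come from $\cF_1$-elements whose leading terms sit on $\e_{\rho(n)}$ or $\e_{m_{\ell(n)}}$, both of index $>n$, and the reduction cascades only upward). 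Consequently, in a relation $\sum_{n=1}^{q-1}g_n\,\bs_{n,\rho(n),\ell(n)}=0$ the $\e_{1,\rho(1)}$-component receives a contribution only from $n=1$, forcing $g_1=0$; and assuming $g_1=\dots=g_{n-1}=0$, the $\e_{n,\rho(n)}$-component of the relation equals $g_n$ times a nonzero monomial, so $g_n=0$. Thus only the trivial relation holds, $\cF_2$ is $\S$-linearly independent, $\Phi_3$ is injective (so the resolution has length $3$), and $\cF_2$, being a linearly independent generating set of $\mbox{ker}(\Phi_2)$, is a basis of it and in particular a minimal generating set.

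The step I expect to demand the most care is the leading-term bookkeeping underlying the claim that the only ``colliding'' pairs in $\cF_1$ are the $q-1$ level pairs: one must check, uniformly in the sign of $\sigma_n$ and in the shape of the SERS, that both $\mathrm{lt}(\bs_{n,\rho(n)})$ and $\mathrm{lt}(\bs_{n,m_{\ell(n)}})$ fall on $\e_n$ for $n\le q-1$ (this relies on $0<c_n<b_n$ in that range together with the tie-breaking convention built into $\prec_{l,\cF_0}$), and that the two tail syzygies $\bs_{q,q+1}$, $\bs_{q+1,q+2}$ have their leading terms on the distinct vectors $\e_q$, $\e_{q+1}$. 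Most of this is, however, already established in the proof of Theorem \ref{phi1}, so within the present proof it is essentially a matter of assembling Schreyer's theorem and the staircase argument.
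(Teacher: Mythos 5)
Your reduction of the statement to Schreyer's theorem applied to the Gr\"obner basis $\cF_1$, together with the bookkeeping of leading terms from \eqref{liding} showing that the only nonvanishing $\bS$-polynomials on $\cF_1$ come from the $q-1$ pairs $\{\bs_{n,\rho(n)},\bs_{n,m_{\ell(n)}}\}$ with $n\le q-1$, is exactly the route the paper takes for the generating and Gr\"obner-basis assertions, so that part of the proposal is a close match.

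Where you deviate from the paper is in the proof of $\S$-linear independence. The paper applies Theorem \ref{3713} a second time, now to the Gr\"obner basis $\cF_2$ of $\mbox{ker}(\Phi_2)$: since by \eqref{remate} the leading terms of distinct $\bs_{n,\rho(n),\ell(n)}$ sit on distinct coordinates of $\S^{2q}$, all their $\bS$-polynomials are $\mathbf{0}$, hence $\mbox{syz}(\cF_2)=\mathbf{0}$ and the family is a free basis. You instead give a direct elimination (``staircase'') argument: from Proposition \ref{aiecomae}, each $\bs_{n,\rho(n),\ell(n)}$ has support on the two level-$n$ coordinates with coefficients that are nonzero monomials in $\X$, and otherwise only on higher levels; reading a hypothetical relation $\sum g_n\bs_{n,\rho(n),\ell(n)}=0$ coordinate by coordinate from level $1$ upward forces all $g_n$ to vanish. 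Both arguments are correct; the paper's is more uniform (it reuses the same Schreyer machinery and only needs the leading-term information already recorded in \eqref{remate}), whereas yours is more elementary, avoids a second invocation of Theorem \ref{3713}, and makes the triangular shape of the $\cF_2$-matrix visible. One small caveat in your write-up: the reduction terms of $\bs_{n,\rho(n),\ell(n)}$ live specifically in level $n+1$ (not arbitrary higher levels), but this only sharpens, and does not change, your induction.
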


In Section \ref{adj}, we will focus on geometric features of the monomial curve $\cC_{u,d}$ which is the image of \eqref{varphi}, and its connections with elements of $\T$-degree one in $\mbox{ker}(\Phi_0).$ The exploration of this kind of relations was suggested by David Cox in \cite{cox08}, and some partial studies over specific families of curves have been obtained in \cite{bus09, CD13b}.  We will focus there on the monomial plane curve case. As we are going to use standard tools of Algebraic Geometry designed for curves over the complex numbers,  all along that section we will assume that $\K$ is an algebraically closed field of characteristic zero.  In a rough way,  a curve $\tilde{\cC}$ is adjoint to another curve $\cC$ if for any point ${\bf p}\in\cC$, including  those of {\em virtual} nature, we have
\begin{equation}\label{isi}
m_{\bf p}(\tilde{\cC})\geq m_{\bf p}(\cC)-1.
\end{equation}
Here, $m_{\bf p}(\cC)$ denotes the multiplicity of ${\bf p}$ with respect to $\cC$.  This definition is not precise at all, and we refer the reader to \cite[Sections 4.1 and 4.8]{cas00} for a correct statement of this concept. Adjoint curves are of importance in computational algebra due to their use in the inverse of the implicitization problem, see for instance \cite{SWP08} and the references therein.

\begin{definition}
A pencil of adjoints of  $\cC$ of degree $\ell\in\N$ is a bihomogeneous form $T_0 C^0_\ell(\X)+T_1C^1_\ell(\X)\in\S,$ with $C^i_\ell(\X)$ of degree $\ell$, defining (scheme-theoretically) a curve $\cC^i_\ell$ adjoint of $\cC,$ for  $i=0,1.$
\end{definition}
For $\ell\in\N,$ we denote with $\mbox{Adj}_\ell(\cC)$ the $\K$-vector space of pencils of adjoints of $\cC$ of degree $\ell$.
\par
In \cite[Conjecture 3.8]{cox08}, it was conjectured that for $u>1,$ a set of minimal generators of $\mbox{ker}(\Phi_0)$ of bidegree $(1,m)$, with $m\in\{d-1,\,d-2\},$ can be chosen to be pencils of adjoints. This conjecture was shown to hold for curves with ``mild'' multiplicities (see \cite[Corollaries 4.10 \& 4.11]{bus09}), but fails in general, see for instance \cite{CD13b}. We will show also in Theorem \ref{kkk}  below, that in the monomial plane curve, the conjecture does not hold either.

Let $\nu_{u,d}$ be the number of solutions of $\ualpha=(\alpha_0, \alpha_1, \alpha_2)\in\N^3$ such that $|\ualpha|=\ell-|\sigma_q-\tau_q|,$ with $\ell\geq d-2,$ satisfying

\begin{equation}\label{pp1}
\begin{array}{ccl}
u\alpha_1+d\alpha_2&<& (d-1)(u-1)-d|\tau_q|, \\  & \mbox{or}&  \\
d\alpha_0+(d-u)\alpha_1&<&(d-1)(d-u-1)-(d-u)|\sigma_q-\tau_q|,
\end{array}
\end{equation}
 plus  the number of solutions of $\ubeta=(\beta_0, \beta_1, \beta_2)\in\N^3$ such that $|\ubeta|=\ell-|\sigma_{q+1}-\tau_{q+1}|$ with $\ell\geq d-2,$ satisfying 
\begin{equation}\label{pp2}
\begin{array}{ccl}
u\beta_1+d\beta_2&<& (d-1)(u-1)-u|\sigma_{q+1}-\tau_{q+1}|, \\  & \mbox{or} & \\
d\beta_0+(d-u)\beta_1&<&(d-1)(d-u-1)-d|\sigma_{q+1}|.
\end{array}
\end{equation}

Note that $\nu_{u,d}$ does not depend on $\ell.$ In Section \ref{adj}, we will prove both Lemma \ref{ddim} and Theorem \ref{adjj}, from which one deduces straightforwardly the following result.
\begin{theorem}\label{kkk} For $\ell\geq d-2,$ we have
$$\dim_\K\left(\mbox{\rm ker}(\Phi_0)_{(1,\ell)}\, /\,\mbox{\rm Adj}_\ell(\cC_{u,d})\cap\mbox{\rm ker}(\Phi_0)_{(1,\ell)} \right)=\nu_{u,d}.
$$
\end{theorem}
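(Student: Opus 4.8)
\smallskip
\noindent\textbf{Proof strategy for Theorem~\ref{kkk}.} The plan is to reduce the statement to two Hilbert–function computations joined by a rank count. Both $\mbox{ker}(\Phi_0)_{(1,\ell)}$ and its subspace $\mbox{Adj}_\ell(\cC_{u,d})\cap\mbox{ker}(\Phi_0)_{(1,\ell)}$ are finite dimensional over $\K,$ so the quotient dimension is the difference of the two. It is convenient to encode the second space as a kernel. Let $\cD\subset\K[\X]$ be the adjoint ideal of $\cC_{u,d}$ (which, $\cC_{u,d}$ being a plane curve, coincides with its conductor), so that a form $C(\X)$ defines an adjoint curve of $\cC_{u,d}$ if and only if $C\in\cD;$ since $\cC_{u,d}$ has only unibranch singularities, $\cD$ is in fact a monomial ideal (see below). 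Every element of $\mbox{ker}(\Phi_0)_{(1,\ell)}$ is automatically of the form $T_0C^0(\X)+T_1C^1(\X)$ with $\deg C^i=\ell,$ and by definition it is a pencil of adjoints exactly when $C^0,C^1\in\cD_\ell.$ Hence, putting
\[\Psi\colon\ \mbox{ker}(\Phi_0)_{(1,\ell)}\ \longrightarrow\ \bigl(\K[\X]/\cD\bigr)^{\oplus 2},\qquad T_0C^0(\X)+T_1C^1(\X)\ \longmapsto\ \bigl(\overline{C^0},\,\overline{C^1}\bigr),\]
we have $\mbox{Adj}_\ell(\cC_{u,d})\cap\mbox{ker}(\Phi_0)_{(1,\ell)}=\ker\Psi,$ and the quantity sought equals $\mathrm{rk}\,\Psi.$

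The first step (this is Lemma~\ref{ddim}) is to compute $\dim_\K\mbox{ker}(\Phi_0)_{(1,\ell)}$ from the resolution of Theorem~\ref{ttt}. By Theorems~\ref{mgenerators} and~\ref{phi1}, the only elements of $\cF_0$ of $\T$-degree one are $F_{q,b_q}(\T,\X)$ and $F_{q+1,b_{q+1}}(\T,\X)$ (one has $b_q=b_{q+1}=1,$ immediate from the definition of the SERS in Section~\ref{s2}), while the syzygies $\bs_{q,q+1}$ and $\bs_{q+1,q+2}$ of $\cF_1,$ both of bidegree $(1,d),$ rewrite $T_0F_{q+2,0}(\T,\X)$ and $T_1F_{q+2,0}(\T,\X)$ as $\K[\X]$-combinations of $F_{q,b_q}$ and $F_{q+1,b_{q+1}};$ all remaining elements of $\cF_1,$ and all of $\cF_2,$ have $\T$-degree $\geq2.$ Hence the $\T$-degree-one strand $\bigoplus_\ell\mbox{ker}(\Phi_0)_{(1,\ell)}$ is generated over $\K[\X]$ by $F_{q,b_q}$ and $F_{q+1,b_{q+1}},$ and inspecting the two pairs of monomials occurring in these two binomials one checks that any relation $A(\X)F_{q,b_q}+B(\X)F_{q+1,b_{q+1}}=0$ is trivial, since after cancelling a common factor it exhibits a polynomial multiple of the nonzero form $F_{q+2,0}(\X)$ equal to zero. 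So this strand is a free $\K[\X]$-module on those two generators, of $\X$-degrees $|\sigma_q-\tau_q|$ and $|\sigma_{q+1}-\tau_{q+1}|,$ and therefore
\[\mathcal B:=\bigl\{\X^{\ualpha}F_{q,b_q}(\T,\X):\ualpha\in\N^3,\ |\ualpha|=\ell-|\sigma_q-\tau_q|\bigr\}\ \cup\ \bigl\{\X^{\ubeta}F_{q+1,b_{q+1}}(\T,\X):\ubeta\in\N^3,\ |\ubeta|=\ell-|\sigma_{q+1}-\tau_{q+1}|\bigr\}\]
is a $\K$-basis of $\mbox{ker}(\Phi_0)_{(1,\ell)};$ reading off the Hilbert function then gives $\dim_\K\mbox{ker}(\Phi_0)_{(1,\ell)}=\binom{\ell-|\sigma_q-\tau_q|+2}{2}+\binom{\ell-|\sigma_{q+1}-\tau_{q+1}|+2}{2},$ the hypothesis $\ell\geq d-2$ being exactly what forces $\ell-d+2\geq0$ so that in the alternating sum of Hilbert functions of the resolution the $F_{q+2,0}$-term genuinely cancels the $F_2$-term and the two displayed binomials are honest dimensions.

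Next one must decide which members of $\mathcal B$ lie in $\ker\Psi.$ The curve $\cC_{u,d}$ is smooth away from $P_0=[1:0:0]$ and $P_2=[0:0:1],$ each a unibranch singularity with monomial local equation $X_1^d=X_2^u,$ resp.\ $X_1^d=X_0^{d-u},$ branch semigroup $\langle u,d\rangle,$ resp.\ $\langle d-u,d\rangle,$ and conductor $(u-1)(d-1),$ resp.\ $(d-u-1)(d-1)$ (the whole multiplicity analysis being governed by the Euclidean algorithm on $(u,d)$ and on $(d-u,d),$ as in \cite[Ch.~8]{BK86}). Pulling monomials back along the two local parametrizations and using that for a unibranch plane singularity the adjoint ideal is the conductor, one finds that $\cD$ is the monomial ideal generated by all $X_0^{c_0}X_1^{c_1}X_2^{c_2}$ with $u c_1+d c_2\geq(u-1)(d-1)$ and $d c_0+(d-u)c_1\geq(d-u-1)(d-1).$ Writing $\X^{\ualpha}F_{q,b_q}(\T,\X)=T_0C^0_{\ualpha}(\X)+T_1C^1_{\ualpha}(\X)$ from \eqref{fgen}, the monomials $C^0_{\ualpha}$ and $C^1_{\ualpha}$ are $\X^{\ualpha}$ shifted by two fixed vectors, so imposing $C^0_{\ualpha},C^1_{\ualpha}\in\cD$ yields four linear inequalities in $\ualpha;$ the B\'ezout identity \eqref{bezz} for $b_q=1$ then forces two of them to imply the remaining two, so that $\Psi(\X^{\ualpha}F_{q,b_q})=0$ exactly when $\ualpha$ does \emph{not} satisfy \eqref{pp1} (here one distinguishes the sign of $\sigma_q,$ just as in the choice of $\prec_l$). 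The same computation with $b_{q+1}=1$ and \eqref{pp2} handles $\X^{\ubeta}F_{q+1,b_{q+1}}.$ Consequently $\Psi$ annihilates the ``good'' members of $\mathcal B$ and sends each of the remaining $\nu_{u,d}$ ``bad'' ones to a nonzero element.

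It then remains to prove $\mathrm{rk}\,\Psi=\nu_{u,d},$ i.e.\ that the $\Psi$-images of these $\nu_{u,d}$ bad basis vectors are $\K$-linearly independent in $(\K[\X]/\cD)^{\oplus 2}.$ I expect this to be the main obstacle: the $C^0$- or $C^1$-monomial attached to a bad $\ualpha$ may coincide with the one attached to some $\ubeta$ (bad or good), or to another $\ualpha',$ so coefficients cannot simply be matched off. The plan is to fix a term order on $\K[\X]$ adapted to the two conductor inequalities and, in a hypothetical dependence relation, look at the $\prec$-minimal monomial occurring with nonzero coefficient in $C^0$ or in $C^1$ that fails to lie in $\cD;$ a combinatorial lemma tracking how the shift maps $\ualpha\mapsto C^0_{\ualpha},$ $\ualpha\mapsto C^1_{\ualpha}$ and their $\ubeta$-counterparts interact with membership in $\cD$ should show that such a minimal monomial is produced by a unique bad index, forcing its coefficient to vanish; peeling it off and iterating finishes the argument. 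Granting this, the three computed numbers combine to give
\[\dim_\K\bigl(\mbox{ker}(\Phi_0)_{(1,\ell)}\,/\,(\mbox{Adj}_\ell(\cC_{u,d})\cap\mbox{ker}(\Phi_0)_{(1,\ell)})\bigr)=\mathrm{rk}\,\Psi=\nu_{u,d},\]
which is the assertion. Everything else in the argument is either the already-established resolution (Theorems~\ref{ttt}--\ref{phi2}) or standard facts about adjoints of unibranch monomial curve singularities.
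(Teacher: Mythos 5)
Your overall strategy is correct and matches the paper's: use Lemma~\ref{ddim} to identify the free $\K[\X]$-module structure with basis $\{F_{q,1},F_{q+1,1}\},$ describe $\cD$ via the conductor inequalities at the two unibranch singularities (this is the paper's Proposition~\ref{key}), translate the adjoint condition into vanishing of certain monomial coefficients, and count. But you stop short at the one step that actually carries weight, and the way you describe it suggests you do not yet see how to close it.

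The obstacle you name at the end — that a $C^0$- or $C^1$-monomial arising from a bad $\ualpha$ might coincide with one arising from some $\ubeta$, so that a rank count on $\Psi$ cannot be read off coordinatewise — is precisely the crux, and your proposed fix (a term order adapted to the conductor inequalities together with an unspecified ``combinatorial lemma'') is not carried out and cannot succeed unless one already knows that such collisions are harmless. What the paper establishes, in the proof of Theorem~\ref{adjj}, is a clean arithmetic fact that dissolves the issue entirely: if a monomial can be written simultaneously as $\X^{\ualpha}X_0^{\sigma_q}X_2^{-\tau_q}$ and $\X^{\ubeta}X_1^{-\sigma_{q+1}+\tau_{q+1}}$ (the collision case in the $T_0$-coefficient; the $T_1$-coefficient is symmetric), then it is a multiple of $X_0^{\sigma_q}X_1^{-\sigma_{q+1}+\tau_{q+1}}X_2^{-\tau_q},$ and using $\sigma_q u+\tau_q(d-u)=1,$ $\sigma_{q+1}u+\tau_{q+1}(d-u)=1,$ together with $\sigma_q-\sigma_{q+1}=\pm(d-u)$ and $\tau_{q+1}-\tau_q=\mp u,$ one computes directly
$$u(-\sigma_{q+1}+\tau_{q+1})-d\tau_q=-1+du\geq(d-1)(u-1),\qquad d\sigma_q+(d-u)(-\sigma_{q+1}+\tau_{q+1})=d(d-u)+1\geq(d-1)(d-u-1),$$
so the collision monomial lies in the conductor $\cD$ already. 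Consequently, in $\K[\X]/\cD$ the $\ualpha$-images and $\ubeta$-images never overlap, the adjoint condition on each of the $T_0$- and $T_1$-coefficients decouples into independent conditions on the $a_\ualpha$'s and on the $b_\ubeta$'s, and the ``good'' basis vectors are exactly those in $\ker\Psi$ with no further linear-algebra argument needed. Your proposal needs this lemma, stated and verified; as written, the proof has a genuine gap at its central step.
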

For $(d,u)=(10,3),$  we have that \eqref{pp1} turns into
$$
3\alpha_1+10\alpha_2<-2  \quad \mbox{or} \quad 
10\alpha_0+7\alpha_1<5,
$$
with $(\alpha_0,\alpha_1,\alpha_2)\in\N^3,\,\alpha_0+\alpha_1+\alpha_2=\ell-7.$
So, the only solution to this system of inequalities is actually $(0,0,\ell-7),$ for $\ell\geq7.$ On the other hand, we get that \eqref{pp2} turns into
$$ 3\beta_1+10\beta_2<9\quad \mbox{or} \quad
10\beta_0+7\beta_1<34.
$$
It is easy to see that there are $4$ solutions to the inequality on the left-hand side, namely one per each of the following values of $3\beta_1+10\beta_2:\, 0,\,3,\,6,\,9.$ For the second inequality, by computing straightforwardly one gets that the values of $10\beta_0+7\beta_1$ attainable with $\beta_0,\,\beta_1\in\N$ are the following twelve:
$$0,\,7,\,10,\,14,\,17,\,20,\,21,\,24,\,27,\,28,\,30,\,31.
$$
So, we have that $\nu_{10,3}=1+4+12=17.$
\par 
It is interesting to remark that the dimension of the quotient is independent of $\ell$ for $\ell\geq d-2,$ which is a situation that we already encountered in the case of $u=2$ with a point of very high multiplicity, see  \cite[Remark 4.5]{CD13b}. We wonder if this situation holds in general. To be more precise, we state the following conjecture.
\begin{conj}
Replace $T_0^d,\,T_0^{d-u}T_1^u,\,T_1^d$ in \eqref{presentacion} with polynomials \newline $u_0(\T),\,u_1(\T),\,u_2(\T)\in\K[\T]$ for  $i=0,1,2,$ homogeneous of degree $d$ and without common factors, such that they define a birational parametrization of a plane algebraic curve $\cC\subset\P^2.$  Then for $\ell\geq d-2,$
$\dim_\K\left(\mbox{\rm ker}(\Phi_0)_{(1,\ell)}\, /\,\mbox{\rm Adj}_\ell(\cC)\cap\mbox{\rm ker}(\Phi_0)_{(1,\ell)} \right)$ does not depend on $\ell.$
\end{conj}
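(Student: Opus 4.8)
The plan is to reduce the statement to a finite-dimensional computation on the conductor of $\cC$, passing through the normalization $\nu:\P^1\to\cC$ (note that the parametrization forces $\cC$ to be rational). First I would make $\mathrm{ker}(\Phi_0)_{(1,\ell)}$ explicit: such an element is $T_0C^0+T_1C^1$ with $C^0,C^1\in\K[\X]_\ell$, and the relation $\Phi_0(T_0C^0+T_1C^1)=0$ reads $T_0\,C^0(\u)+T_1\,C^1(\u)=0$ in $\K[\T]$, with $\u=(u_0,u_1,u_2)$. As $\K[\T]$ is a UFD and $\gcd(T_0,T_1)=1$, this forces $C^0(\u)=T_1G$ and $C^1(\u)=-T_0G$ for a unique $G\in\K[\T]_{d\ell-1}$. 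Setting $V_\ell:=\{C(\u):C\in\K[\X]_\ell\}\subseteq\K[\T]_{d\ell}$ and $W_\ell:=\{G\in\K[\T]_{d\ell-1}:T_0G,\,T_1G\in V_\ell\}$, the assignment $(C^0,C^1)\mapsto G$ is onto $W_\ell$ with kernel $\big(E\cdot\K[\X]_{\ell-d}\big)^2$, where $E$ is the implicit equation of $\cC$; hence $\dim_\K\mathrm{ker}(\Phi_0)_{(1,\ell)}=\dim_\K W_\ell+2\dim_\K\big(E\cdot\K[\X]_{\ell-d}\big)$. Running the same bijection through adjoint curves, and using that the adjoint ideal is stable under multiplication by arbitrary forms, that $E$ is (trivially) adjoint, and --- this is where $\ell\ge d-2$ enters --- the classical fact that for $\ell\ge d-2$ the degree-$\ell$ adjoints of a degree-$d$ plane curve cut out the \emph{complete} series $\left|\mathcal{O}_{\P^1}(d\ell)(-\mathfrak{C})\right|$ on $\P^1$, $\mathfrak{C}$ being the conductor divisor, of degree $(d-1)(d-2)$ (see \cite{cas00}), one obtains analogously $\dim_\K\!\big(\mathrm{Adj}_\ell(\cC)\cap\mathrm{ker}(\Phi_0)_{(1,\ell)}\big)=\dim_\K W^{\mathrm{adj}}_\ell+2\dim_\K\big(E\cdot\K[\X]_{\ell-d}\big)$, where $V^{\mathrm{adj}}_\ell:=\H^0(\mathcal{O}_{\P^1}(d\ell)(-\mathfrak{C}))$ and $W^{\mathrm{adj}}_\ell:=\{G\in\K[\T]_{d\ell-1}:T_0G,\,T_1G\in V^{\mathrm{adj}}_\ell\}$. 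Subtracting, the conjecture amounts to the assertion that $\dim_\K(W_\ell/W^{\mathrm{adj}}_\ell)$ does not depend on $\ell$ for $\ell\ge d-2$.

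Next I would encode $V_\ell$ and $V^{\mathrm{adj}}_\ell$ as jet conditions along $\mathfrak{C}$. Put $\Lambda:=\mathcal{O}_\mathfrak{C}$, an Artinian $\K$-algebra of length $(d-1)(d-2)$, and fix a linear form $h_0\in\K[\T]_1$ not vanishing on $\mathfrak{C}$; dividing by powers of $h_0$ trivializes all the restrictions $\mathcal{O}_{\P^1}(m)|_\mathfrak{C}\cong\Lambda$ compatibly with multiplication, so the restriction-to-$\mathfrak{C}$ maps become $\K$-linear maps $\rho_m:\K[\T]_m\to\Lambda$ with $\mathrm{ker}(\rho_m)=\H^0(\mathcal{O}_{\P^1}(m)(-\mathfrak{C}))$, and multiplication by $T_i$ becomes multiplication by the fixed element $\bar T_i:=(T_i/h_0)|_\mathfrak{C}\in\Lambda$. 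Then $V^{\mathrm{adj}}_\ell=\mathrm{ker}(\rho_{d\ell})$, while the Gorenstein (``symmetry of the conductor'') property of the plane curve $\cC$ --- the inclusions $\mathcal{O}_{\P^1}(d\ell)(-\mathfrak{C})\subseteq\mathcal{O}_\cC(\ell)\subseteq\nu_*\mathcal{O}_{\P^1}(d\ell)$ with middle subquotient a $(d-1)(d-2)/2$-dimensional $\Lambda$-submodule that the $h_0$-trivializations carry, for every $\ell$, onto one and the same submodule $\Lambda_0\subseteq\Lambda$ --- yields $V_\ell=\rho_{d\ell}^{-1}(\Lambda_0)$ whenever $\H^1(\mathcal{O}_{\P^1}(d\ell)(-\mathfrak{C}))=0$. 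For such $\ell$ one then gets $W_\ell=\rho_{d\ell-1}^{-1}(\Lambda_1)$ and $W^{\mathrm{adj}}_\ell=\rho_{d\ell-1}^{-1}(\Lambda^{\mathrm{adj}}_1)$ with the $\ell$-independent subspaces $\Lambda_1:=\{\lambda\in\Lambda:\bar T_0\lambda,\bar T_1\lambda\in\Lambda_0\}$ and $\Lambda^{\mathrm{adj}}_1:=\{\lambda\in\Lambda:\bar T_0\lambda=\bar T_1\lambda=0\}$.

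Finally I would clear the cohomology: $\mathrm{coker}(\rho_m)$ injects into $\H^1(\mathcal{O}_{\P^1}(m-(d-1)(d-2)))$, which vanishes once $m\ge (d-1)(d-2)-1$; for $m=d\ell$ and $m=d\ell-1$ this holds precisely when $\ell\ge d-2$ (and fails at $\ell=d-3$, which is the source of the threshold). Thus for $\ell\ge d-2$ both $\rho_{d\ell}$ and $\rho_{d\ell-1}$ are surjective, whence $\dim_\K W_\ell=\dim_\K\mathrm{ker}(\rho_{d\ell-1})+\dim_\K\Lambda_1$ and $\dim_\K W^{\mathrm{adj}}_\ell=\dim_\K\mathrm{ker}(\rho_{d\ell-1})+\dim_\K\Lambda^{\mathrm{adj}}_1$, so $\dim_\K(W_\ell/W^{\mathrm{adj}}_\ell)=\dim_\K\Lambda_1-\dim_\K\Lambda^{\mathrm{adj}}_1$, a number not involving $\ell$; equivalently it equals $\dim_\K\{(\bar T_0\lambda,\bar T_1\lambda):\lambda\in\Lambda,\ \bar T_0\lambda,\bar T_1\lambda\in\Lambda_0\}$. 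This proves the conjecture and describes the common value intrinsically in terms of the conductor; for the monomial parametrization it should recover the number $\nu_{u,d}$ of Theorem \ref{kkk}.

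The crux is the middle step: one must show, carefully and without circularity, that $\mathcal{O}_\cC(\ell)/\mathcal{O}_{\P^1}(d\ell)(-\mathfrak{C})$, viewed inside $\Lambda$ through the $h_0$-trivialization, is literally the same $(d-1)(d-2)/2$-dimensional submodule $\Lambda_0$ for all $\ell$, and that this identification intertwines multiplication by $T_0,T_1$ with multiplication by $\bar T_0,\bar T_1$. This is exactly where the Gorenstein/duality structure of the conductor of a plane curve (equivalently, the symmetry of the value semigroups of its branches) must be invoked, and one has to take care at the coordinate points $[1:0],[0:1]$ of $\P^1$, where $T_0$ or $T_1$ may be a zero-divisor of $\Lambda$ --- which is why one divides by the auxiliary general form $h_0$ throughout. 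The remaining inputs --- completeness of the adjoint series for degrees $\ge d-2$ and the $\H^1$-vanishing on $\P^1$ --- are standard, and the rest is bookkeeping.
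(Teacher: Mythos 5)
The statement you were asked to prove is stated in the paper as a \emph{conjecture}, not a theorem; the authors verify it only for monomial parametrizations (Theorem \ref{kkk}), via the explicit diophantine count of Lemma \ref{ddim} and Theorem \ref{adjj}. There is therefore no proof in the paper to compare yours against; your proposal, if it holds up, would settle an open question, and it is considerably more conceptual than the paper's case-by-case computation. The outer structure is right: the reduction from pencils $T_0C^0+T_1C^1\in\mbox{\rm ker}(\Phi_0)_{(1,\ell)}$ to the cofactor $G\in W_\ell$ (with kernel $(E\cdot\K[\X]_{\ell-d})^2$ in both the unrestricted and the adjoint settings), the identification of the adjoint condition with vanishing along the conductor divisor $\mathfrak{C}$ on $\P^1$, and the observation that the evaluation maps $\rho_m:\K[\T]_m\to\Lambda$ are surjective exactly for $m\geq(d-1)(d-2)-1$, i.e.\ $\ell\geq d-2$, are all correct and account precisely for the threshold.

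But the step you single out as ``the crux'' is false as asserted, and Gorenstein duality will not rescue it in the form you wrote. Choose a local generator $e_\cC$ of $\mathcal{O}_{\P^2}(1)|_\cC$ on a neighbourhood of $\mathrm{Sing}(\cC)$ (e.g.\ $X_0$ near $\bp_0$ and $X_2$ near $\bp_\infty$ in the monomial case) and set $w:=(\nu^*e_\cC/h_0^d)|_\mathfrak{C}\in\Lambda^*$. Then $\rho_{d\ell}\big(C(\u)\big)=(C(\u)/h_0^{d\ell})|_\mathfrak{C}=\big((C/e_\cC^\ell)|_\cC\big)\cdot w^\ell$, so $\rho_{d\ell}(V_\ell)=w^\ell\cdot\Lambda_0'$, where $\Lambda_0'$ is the \emph{fixed} image of $\mathcal{O}_\cC/\mathfrak{c}$ in $\Lambda$. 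The unit $w$ is the discrepancy between the $h_0^d$-trivialization of $\mathcal{O}_{\P^1}(d)$ and the one pulled back from $\mathcal{O}_{\P^2}(1)$, and it is not $1$ in general; so the trivialized subquotient genuinely drifts with $\ell$, and so does $\Lambda_1^{(\ell)}:=\{\lambda:\bar T_0\lambda,\bar T_1\lambda\in\rho_{d\ell}(V_\ell)\}$. Fortunately the argument survives a one-line patch: multiplication by $w^\ell$ is a $\K$-linear automorphism of $\Lambda$ that commutes with multiplication by $\bar T_0,\bar T_1$, hence carries $\Lambda_1':=\{\lambda:\bar T_0\lambda,\bar T_1\lambda\in\Lambda_0'\}$ bijectively onto $\Lambda_1^{(\ell)}$ while leaving $\Lambda_1^{\mathrm{adj}}:=\{\lambda:\bar T_0\lambda=\bar T_1\lambda=0\}$ untouched; for $\ell\geq d-2$, where $\rho_{d\ell-1}$ is onto, one still concludes $\dim(W_\ell/W_\ell^{\mathrm{adj}})=\dim\Lambda_1'-\dim\Lambda_1^{\mathrm{adj}}$, an $\ell$-independent number. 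Two smaller inaccuracies: $\Lambda_0'$ is an $\mathcal{O}_\cC/\mathfrak{c}$-submodule (in fact a subring), not a $\Lambda$-submodule of $\Lambda$, though only $\K$-linearity is used so this is harmless; and ``completeness of the adjoint series'' is automatic in every degree from $\H^1(\P^2,\mathcal{O}(\ell-d))=0$, so the hypothesis $\ell\geq d-2$ enters only through the surjectivity of $\rho_{d\ell}$ and $\rho_{d\ell-1}$. With these repairs the proof appears to go through.
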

We will see in Proposition \ref{arriba} a bound which is quadratic in $d$ for the value of $\nu_{\mu,d},$ and show in Section \ref{examples} that the quadratic nature of this bound is unavoidable.

\bigskip
\section{Euclidean Remainder Sequences }\label{s2}
As in Section \ref{setup}, consider the standard Euclidean Remainder Sequence  $\{a_n\}_{n=0,\ldots, p},$ $\{q_{m}\}_{m=1,\ldots, p-1},$  associated to the data $(d,u)$ which is defined as follows:
$a_0=d-u,\,a_1=u.$
And for $1\leq i\leq p-1$ we write $a_{i-1}=q_{i}a_{i}+a_{i+1},$ with $q_i,\,a_{i+1}\in\Z,\,0\leq a_{i+1}<a_{i}.$
The number $p=p(d,u)$ is such that $a_p=0.$
\par Associated with this well-known mathematical object, we define the so called {\em Extended Euclidean Remainder Sequence} $\{(s_n,\,t_n)\}_{n=0,\ldots, p}$ as follows:
$$\begin{array}{lcl}
s_0=0&\,&t_0=1\\
s_1=1&\,&t_1=0\\
s_{i+1}=s_{i-1}-q_i\,s_i&\,&t_{i+1}=t_{i-1}-q_it_i,\, i=1,\ldots, p-1.
\end{array}
$$
The following lemma collects some properties of these sequences.
\begin{lemma}\label{corr}$^{}$
\begin{enumerate}
\item[i)] $s_iu+t_i(d-u)=a_i$ for $0\leq i\leq p.$
\item[ii)] For all admissible values of $i\geq0,$ both $s_{2i},\,t_{2i+1}$ are either zero or negative integers, and both $s_{2i+1},\, t_{2i}$ are nonnegative.
\item[iii)]  $|s_0|<|s_1|\leq |s_2|<|s_3|<\ldots$ and also $ |t_1|< |t_2|\leq|t_3|<|t_4|<\ldots$
\item[iv)] For $i=1,\ldots p,\,|s_i|\leq \frac{d-u}{a_{i-1}},\, |t_i|\leq \frac{u}{a_{i-1}}.$
In particular, $|s_i|<d-u,$  and $|t_i|<u$ for all $i\leq p-1.$
\end{enumerate}
\end{lemma}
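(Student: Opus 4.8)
The plan is to prove the four items essentially by simultaneous induction on $i$, exploiting the parallel recursions for $(s_i)$ and $(t_i)$ together with the defining recursion $a_{i-1}=q_ia_i+a_{i+1}$. First I would establish (i): the base cases $i=0,1$ are immediate from $s_0u+t_0(d-u)=d-u=a_0$ and $s_1u+t_1(d-u)=u=a_1$; for the inductive step, using $s_{i+1}=s_{i-1}-q_is_i$, $t_{i+1}=t_{i-1}-q_it_i$, one computes
\[
s_{i+1}u+t_{i+1}(d-u)=(s_{i-1}u+t_{i-1}(d-u))-q_i(s_iu+t_i(d-u))=a_{i-1}-q_ia_i=a_{i+1}.
\]
This is the routine linear-algebra core and presents no difficulty.

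Next I would prove (ii) and (iii) together. The point is that all $q_i\geq 1$ (since $0<a_{i+1}<a_i$ forces $q_i\geq 1$ for $i\le p-1$; in fact $q_1\geq 1$ because $a_0=d-u>u=a_1$ as $u<d/2$), and the recursion $x_{i+1}=x_{i-1}-q_ix_i$ with alternating signs propagates. Concretely, I would show by induction that $\operatorname{sgn}(s_i)=(-1)^i$ (with the convention that $0$ has any sign as needed for $s_0$) and $\operatorname{sgn}(t_i)=(-1)^{i+1}$: if $s_{i-1}$ and $s_i$ have opposite signs, then $s_{i+1}=s_{i-1}-q_is_i$ has the sign of $s_{i-1}$ (both terms contribute with that sign), which is $(-1)^{i+1}$, and similarly for $t$. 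This gives (ii). For (iii), from $|s_{i+1}|=|s_{i-1}|+q_i|s_i|\geq |s_{i-1}|+|s_i|$ one gets the strict increase $|s_i|<|s_{i+1}|$ for $i\geq 1$ (and $|s_1|=1\le|s_2|$, with equality exactly when $q_1=1$, i.e. impossible here since $q_1\geq 1$ — actually one should be careful: $|s_2|=|s_0-q_1s_1|=q_1\geq 1=|s_1|$, hence $\le$ as stated), and the same argument for the $t_i$ starting the strict chain from $i=2$, with $|t_1|=0<|t_2|$.

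Finally, for (iv) I would combine (i), (ii), (iii). From (i) and the sign information in (ii), in the identity $s_iu+t_i(d-u)=a_i$ the two summands $s_iu$ and $t_i(d-u)$ have opposite signs (since $s_i$ and $t_i$ do, by (ii)), so $|t_i|(d-u)=|s_i|u-a_i \le |s_i|u$ when $i$ is odd, and dually; more usefully, applying the same identity at index $i-1$ and using $s_{i-1},s_i$ opposite signs together with the Cramer-type relation $s_it_{i-1}-s_{i-1}t_i=\pm 1$ (which follows by an immediate induction from the recursions, the determinant being multiplied by $-1$ at each step from its value $-1$ at $i=1$), one gets $|s_i|a_{i-1}=|s_i(s_{i-1}u+t_{i-1}(d-u))|$ and, expanding and using $s_it_{i-1}-s_{i-1}t_i=\pm1$, this equals $|s_{i-1}(s_iu+t_i(d-u))\pm(d-u)| = |s_{i-1}a_i\pm(d-u)|\le |s_{i-1}|a_i+(d-u)$... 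I would instead take the cleaner route: prove directly by induction that $|s_i|a_{i-1}\le d-u$ and $|t_i|a_{i-1}\le u$. The base cases are $|s_1|a_0=d-u$ and $|t_1|a_0=0\le u$. For the step, $|s_{i+1}|a_i=(|s_{i-1}|+q_i|s_i|)a_i = |s_{i-1}|a_i+q_i a_i|s_i| \le |s_{i-1}|a_{i-1}' \dots$ — here the clean way is the determinant identity: from $|s_it_{i-1}-s_{i-1}t_i|=1$ and (i) applied twice, eliminate to get $|s_i|\cdot a_{i-1} = |t_{i-1}u - \text{(stuff)}|$; the sign control from (ii) makes the cancellations go the right way, yielding $|s_i|a_{i-1}\le d-u$, hence $|s_i|\le (d-u)/a_{i-1}$, and since $a_{i-1}\ge 1$ for $i\le p$ and $a_{i-1}\ge a_1=u\ge 1$ forces the bound, while for $i\le p-1$ we have $a_{i-1}\ge a_{p-1}\ge 1$ and in fact $a_{i-1}>a_i\ge 1$ so $a_{i-1}\ge 2$ is not needed — one just notes $|s_i|\le (d-u)/a_{i-1}\le d-u$ with equality only at $i=1$, and strict for $i\le p-1$ since then $a_{i-1}\ge 2$. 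The analogous statement for $t_i$ is symmetric.

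The main obstacle I anticipate is item (iv): getting the \emph{sharp} bound $|s_i|\le (d-u)/a_{i-1}$ (rather than a weaker $|s_i|\le d-u$) requires carefully coupling the determinant relation $s_it_{i-1}-s_{i-1}t_i=(-1)^{i}$ with the B\'ezout identity (i) and the sign pattern (ii) so that the triangle-inequality estimates do not lose a factor; the bookkeeping of which index is even versus odd, and hence which quantity is the "large" one in each B\'ezout identity, is where care is needed. Everything else is a routine two-term-recursion induction.
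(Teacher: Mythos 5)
The paper itself gives no proof of this lemma; it simply cites \cite[Lemma 3.8, Lemma 3.12 \& Exercise 3.15]{vzGG03}. Your plan reconstructs the standard textbook argument, and its core ideas are all correct: the linear recursion for (i), the alternating-sign induction for (ii), the recurrence-based growth estimate for (iii), and the Cramer determinant identity $s_it_{i-1}-s_{i-1}t_i=\pm1$ for (iv). Three points would need tightening in a final write-up.

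First, the parity convention in (ii) is off by one: since $s_1=1>0$ and $t_0=1>0$, the correct assertions are $(-1)^{i+1}s_i\geq 0$ and $(-1)^{i}t_i\geq 0$ (so $s$ is nonnegative at odd indices and nonpositive at even ones, and $t$ the other way), whereas your written formulas $\mathrm{sgn}(s_i)=(-1)^i$, $\mathrm{sgn}(t_i)=(-1)^{i+1}$ invert both; this is what the lemma's statement that $s_{2i},t_{2i+1}\leq 0$ and $s_{2i+1},t_{2i}\geq 0$ encodes. Second, in (iii) the single non-strict link in the $t$-chain sits between $|t_2|$ and $|t_3|$, not between $|t_1|$ and $|t_2|$ (which is strict: $0<1$); it arises because $t_1=0$ plays the same role that $s_0=0$ plays in producing $|s_1|\leq|s_2|$, so your phrase ``the strict chain from $i=2$'' should be corrected. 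Third, for (iv) the determinant route you gesture at can be made both clean and lossless: combining (i) at indices $i-1$ and $i$ gives
\[
s_ia_{i-1}-s_{i-1}a_i=(s_it_{i-1}-s_{i-1}t_i)(d-u)=\pm(d-u),
\]
and by (ii) the products $s_ia_{i-1}$ and $s_{i-1}a_i$ have opposite signs (or one vanishes), so taking absolute values yields the exact identity $|s_i|\,a_{i-1}+|s_{i-1}|\,a_i=d-u$, which is strictly stronger than the stated bound $|s_i|\leq(d-u)/a_{i-1}$ and involves no triangle-inequality slack; the $t$-estimate follows symmetrically from $t_ia_{i-1}-t_{i-1}a_i=\mp u$. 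The concluding ``in particular'' clause then follows since $a_{i-1}\geq a_{p-2}\geq 2$ for $2\leq i\leq p-1$, while for $i=1$ one has $|s_1|=1<d-u$ directly from $u<d/2$. With these corrections your argument is complete and self-contained, supplying what the paper delegates to the reference.
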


\begin{proof}
These results are classical. See for instance \cite[Lemma 3.8,\, Lemma 3.12 \& Exercise 3.15]{vzGG03}.
\end{proof}
\smallskip
For example, in the case $u=3,\,d=10,$ we have that
$$\{(s_n,\,t_n)\}_{n=0,1,2,3}=\{(0,1),\,(1,0),\,(-2,1),(7,-3)\}.
$$
We now recall the Slow Euclidean Remainder Sequence (SERS) from the introduction: it is a sequence of pairs $\{(b_n,\,c_n)\}_{n=1,\ldots, q+1}$  defined recursively as follows:
$b_1=d-u,\,c_1=u,$ and for $0\leq n\leq q,$ $(b_{n+1},\,c_{n+1})$ is univocally defined in such a way that $\{b_{n+1},\,c_{n+1}\}=\{b_n-c_n,\,c_n\},$ and
$b_{n+1}\geq c_{n+1}.$ Note that we straightforwardly have that $\{b_n\}$ is a decreasing sequence of nonegative integer numbers.
We will also consider a  sort of {\em Extended SERS}, which will be a sequence of $4$-tuples of integers $\{(\sigma_n,\,\tau_n,\,\alpha_n,\,\beta_n)\}_{n=1,\ldots, q+1}$ defined recursively as follows: \par\noindent $(\sigma_1,\tau_1,\alpha_1,\beta_1)=(0,1,1,0),$
\begin{equation}\label{abcd}
(\sigma_{n+1},\,\tau_{n+1},\,\alpha_{n+1},\,\beta_{n+1})=\left\{
\begin{array}{lcr}
(\sigma_n-\alpha_n,\,\tau_n-\beta_n,\,\alpha_n,\,\beta_n) &\ \mbox{if} & b_n-c_n\geq c_n\\
(\alpha_n,\,\beta_n,\,\sigma_n-\alpha_n,\,\tau_n-\beta_n) &\ \mbox{if} & b_n-c_n< c_n,
\end{array}\right.
\end{equation}
for $1\leq n\leq q.$
\par For the case $u=3,\,d=10,$ we have that the sequence $\{(\sigma_n,\,\tau_n,\,\alpha_n,\,\beta_n)\}_{n=1,\ldots,6}$ is equal to
$$
\{(0,1,1,0), (-1,1,1,0), (1,0, -2,1), (3,-1,-2,1), (5,-2,-2,1), (-2,1, 7,-3)\}.
$$
Recall the definition of the sequence $\{m_\ell\}_{\ell=0,\ldots, p}$ given in the introduction:  set $m_0=1,$ and for $\ell=1,\ldots, p-1,\,m_\ell=1+\sum_{j=1}^\ell q_j.$ Set also  $m_{p}=m_{p-1}+1=q+2.$
The reason we call the sequences $\{(b_n,\,c_n)\}_{n=1,\ldots, q+1}$ and $\{(\sigma_n, \tau_n, \alpha_n, \beta_n))\}_{n=1,\ldots, q+1}$ ``slow'' Euclidean  and Extended Euclidean respectively is the following:
\begin{proposition}\label{connection}
With the notation established above, we have
\begin{enumerate}
	\item [i)] For $0\leq\ell\leq p-1$ and $m_\ell \leq n< m_{\ell+1},$ $$(b_n, c_n)=(a_{\ell}-(n-m_\ell)a_{\ell+1},a_{\ell+1}).$$
In particular,  $(b_{m_\ell}, c_{m_\ell})=(a_{\ell},a_{\ell+1})$ for $\ell\leq p-1.$
\item[ii)]  For $0\leq\ell\leq p-1$ and $m_\ell\leq n< m_{\ell+1},$
$$(\sigma_n, \tau_n, \alpha_n, \beta_n)=(s_{\ell}-(n-m_\ell)s_{\ell+1}, t_{\ell}-(n-m_\ell)t_{\ell+1}, s_{\ell+1}, t_{\ell+1}).$$
In particular,$(\sigma_{m_\ell}, \tau_{m_\ell}, \alpha_{m_\ell}, \beta_{m_\ell})=(s_{\ell}, t_\ell, s_{\ell+1}, t_{\ell+1})$ for $\ell\leq p-1.$
\item[iii)] For $n=1,\ldots q+1,$
$$\sigma_nu+\tau_n(d-u)=b_n \ \mbox{and} \ \ \alpha_nu+\beta_n(d-u)=c_n.
$$
\item[iv)]  Set $s_{p+1}=d-u$ and $t_{p+1}=u.$ Then, for $0\leq\ell\leq p-2,$
$$|s_\ell|=|\sigma_{m_\ell}|\leq |\sigma_{m_\ell+1}|\leq \ldots < |\sigma_{m_\ell+q_{\ell}-1}|\leq |s_{\ell+2}|=|\sigma_{m_{\ell+2}}|,$$
and all these numbers have the same sign. Also,
$$|t_\ell|=|\tau_{m_\ell}|\leq |\tau_{m_\ell+1}|\leq \ldots \leq |\tau_{m_\ell+q_{\ell}-1}|\leq |t_{\ell+2}|=|\tau_{m_{\ell+2}}|,$$
and all these numbers have the same sign.
\item[v)] For $0\leq\ell\leq p-2$ and $m_\ell \leq n< m_{\ell+1},$
$$|\sigma_n|\leq\frac{d-u}{a_{\ell+1}}<d-u, \ \mbox{and} \ \ |\tau_n|\leq\frac{u}{a_{\ell+1}}<u.$$
\item[vi)] $|\sigma_{q+1}|\leq\frac{d-u}{a_{p-2}}<d-u, \ \mbox{and} \ \ |\tau_{q+1}|\leq\frac{u}{a_{p-2}}<u.$
\item[vii)] For $j\leq q+1,\,|\sigma_{j}|< d-u,$ and $|\tau_{j}|<u.$
\item[viii)] $|\sigma_{q+1}-\tau_{q+1}|\leq \frac{d}2\leq |\sigma_q-\tau_q|.$
\end{enumerate}
\end{proposition}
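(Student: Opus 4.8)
The plan is to prove items i)--viii) in the order listed, using induction on $n$ for the first few items and then deducing the arithmetic inequalities from the dictionary between the SERS and the standard Euclidean Remainder Sequence. For i), I would argue by induction on $\ell$ and, within each block $m_\ell \le n < m_{\ell+1}$, by induction on $n$. The base case $\ell=0$ is immediate since $b_1=d-u=a_0$, $c_1=u=a_1$, and $m_0=1$. For the inductive step, I would observe that as long as $b_n - c_n = a_\ell - (n-m_\ell)a_{\ell+1} - a_{\ell+1} \ge a_{\ell+1} = c_n$, i.e.\ as long as $n+1 < m_{\ell+1}$, the recursion sets $(b_{n+1},c_{n+1}) = (b_n - c_n, c_n) = (a_\ell - (n+1-m_\ell)a_{\ell+1}, a_{\ell+1})$, which is the claimed formula; and precisely when $n+1 = m_{\ell+1}$ we have $b_n - c_n = a_\ell - (q_\ell - 1)a_{\ell+1} = a_{\ell+1} + a_{\ell+2} \geq a_{\ell+1}$ unless the division was exact, so after this last subtraction of this block we land at $(b_{m_{\ell+1}}, c_{m_{\ell+1}}) = (a_{\ell+1}, a_{\ell+2})$, which restarts the next block. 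One has to handle the terminal block carefully: when $a_{\ell+2}=0$ the sequence stops at $n=q+1$, and $b_{q+1} = a_{p-1} = \gcd(d-u,u) = 1$. Item ii) is proven identically, replacing $(b,c,a)$ by $(\sigma,\tau,s)$ and $(\alpha,\beta,t)$ throughout, using the recursion \eqref{abcd}; the only thing to check is that the case split $b_n - c_n \gtrless c_n$ in \eqref{abcd} matches the case split in i) (which it does, by i) itself), so the ``active'' pair $(\sigma_n,\tau_n)$ tracks $(s_\ell - (n-m_\ell)s_{\ell+1}, \dots)$ exactly as $b_n$ tracks $a_\ell - (n-m_\ell)a_{\ell+1}$.

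Item iii) then follows by combining ii) with Lemma \ref{corr} i): since $s_\ell u + t_\ell(d-u) = a_\ell$ and $s_{\ell+1}u + t_{\ell+1}(d-u) = a_{\ell+1}$, linearity in the combination $\sigma_n = s_\ell - (n-m_\ell)s_{\ell+1}$ gives $\sigma_n u + \tau_n(d-u) = a_\ell - (n-m_\ell)a_{\ell+1} = b_n$, and similarly $\alpha_n u + \beta_n(d-u) = c_n$. (Alternatively one can prove iii) directly by induction on $n$ from the recursions, bypassing ii), which is probably cleaner to write.) For iv), I would use ii) to write $\sigma_{m_\ell} = s_\ell$, $\sigma_{m_{\ell+2}} = s_{\ell+2}$, and note that for $m_\ell \le n \le m_{\ell+1}-1$ we have $\sigma_n = s_\ell - (n-m_\ell)s_{\ell+1}$, a sequence that moves monotonically from $s_\ell$ towards $s_\ell - (q_\ell-1)s_{\ell+1} = s_{\ell+2} + s_{\ell+1}$; since by Lemma \ref{corr} ii)--iii) consecutive $s_i$ alternate in sign and grow in absolute value, $s_\ell, s_{\ell+1}, s_{\ell+2}$ have the pattern ($s_\ell$ and $s_{\ell+2}$ same sign, $s_{\ell+1}$ opposite), and hence $s_\ell - k s_{\ell+1}$ for $k=0,\dots,q_\ell-1$ is a sequence of the sign of $s_\ell$ with nondecreasing absolute value, bounded above in absolute value by $|s_{\ell+2}|$; the statement for $\tau$ is the same argument with $t_i$. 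The subtlety is the sign bookkeeping and the borderline equalities ($\le$ vs $<$), which come from whether $q_\ell = 1$ and from the fact that $|s_1| = |s_2|$ can actually occur.

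Items v), vi), vii) are then immediate consequences: v) follows from iv) together with Lemma \ref{corr} iv) (which bounds $|s_i| \le \frac{d-u}{a_{i-1}}$), since every $|\sigma_n|$ in the block is $\le |s_{\ell+2}| \le \frac{d-u}{a_{\ell+1}}$ (one checks the indices line up), and $a_{\ell+1}\ge 1$ gives the crude bound $< d-u$; vi) is the special case of v) at the last block, using $s_{p+1}=d-u$, $t_{p+1}=u$ as declared; vii) merely collects v) and vi). The main obstacle, and the one part I would budget real care for, is viii): the claim $|\sigma_{q+1}-\tau_{q+1}| \le \frac d2 \le |\sigma_q - \tau_q|$. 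For the left inequality, $(\sigma_{q+1},\tau_{q+1}) = (s_{p-1}, t_{p-1})$ by ii), and $s_{p-1}u + t_{p-1}(d-u) = a_{p-1} = 1$; combined with the sign pattern of $(s_{p-1},t_{p-1})$ (opposite signs, so $|\sigma_{q+1}-\tau_{q+1}| = |\sigma_{q+1}| + |\tau_{q+1}|$) and the sharp bounds $|s_{p-1}| \le \frac{d-u}{a_{p-2}}$, $|t_{p-1}| \le \frac{u}{a_{p-2}}$ from Lemma \ref{corr} iv) with $a_{p-2}\ge 2$ (since $a_{p-1}=1 < a_{p-2}$ forces $a_{p-2}\ge 2$), one gets $|\sigma_{q+1}-\tau_{q+1}| \le \frac{d-u}{a_{p-2}} + \frac{u}{a_{p-2}} = \frac{d}{a_{p-2}} \le \frac d2$. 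For the right inequality I would use that $n=q$ lies in the last nontrivial block, so $(\sigma_q,\tau_q) = (s_{p-1} - (q-m_{p-1})s_p, \dots)$ with $s_p = \pm(d-u)$, $t_p = \mp u$ at the end, and since $q - m_{p-1} = q_{p-1}-1 \ge 0$ and the $s_p, t_p$ terms dominate, $|\sigma_q - \tau_q|$ picks up a full $|s_p| + |t_p| = d$ contribution from that last step minus a correction of size $|s_{p-1}| + |t_{p-1}| = |\sigma_{q+1} - \tau_{q+1}| \le \frac d2$, giving $|\sigma_q - \tau_q| \ge d - \frac d2 = \frac d2$. Getting the signs exactly right here (which of $\sigma_q,\tau_q$ is positive, and hence that the subtraction really is $d - |\sigma_{q+1}-\tau_{q+1}|$ rather than a smaller difference) is the delicate point; I expect to reduce it to the two cases $\sigma_q > 0$ and $\sigma_q < 0$ separately, matching the case split already used in \eqref{fgen} and \eqref{rrest}, and in each case to verify $\sigma_q - \tau_q = s_p - t_p - (s_{p-1}-t_{p-1})$ has the asserted absolute value.
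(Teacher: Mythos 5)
Your overall plan matches the paper's for items i)--iv): induction on $n$ (resp.\ $\ell$) for i)--iii), and iv) from ii) together with Lemma~\ref{corr}-ii),iii). Two points need repair. In v), you conclude $\frac{d-u}{a_{\ell+1}}<d-u$ from ``$a_{\ell+1}\ge 1$,'' but this fails when $a_{\ell+1}=1$, which happens precisely at $\ell=p-2$. The paper splits into $\ell\le p-3$ (where $a_{\ell+1}\ge a_{p-2}\ge 2$ gives strictness) and $\ell=p-2$, where strictness is instead read off the B\'ezout identity $\sigma_n u+\tau_n(d-u)=b_n$ with $0<b_n<u$; without this case split the chain of inequalities in v) is simply false at the last block.

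For viii), your left-hand bound is a genuine (and cleaner) alternative: $|\sigma_{q+1}-\tau_{q+1}|=|\sigma_{q+1}|+|\tau_{q+1}|\le\frac{d-u}{a_{p-2}}+\frac{u}{a_{p-2}}=\frac{d}{a_{p-2}}\le\frac d2$ directly from Lemma~\ref{corr}-iv) and $a_{p-2}\ge2$, which the paper does not do directly. But your sketch of the right-hand inequality has an index error and a deferred core step. Since $q+1=m_{p-1}$, the index $q$ sits in the block $m_{p-2}\le q<m_{p-1}$, so $(\sigma_q,\tau_q)=(s_{p-2}-(q-m_{p-2})s_{p-1},\,t_{p-2}-(q-m_{p-2})t_{p-1})$ with $q-m_{p-2}=q_{p-1}-1$; your formula $s_{p-1}-(q-m_{p-1})s_p$ with ``$q-m_{p-1}=q_{p-1}-1\ge0$'' is off by a block (in fact $q-m_{p-1}=-1$). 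You then explicitly punt on the sign bookkeeping, but that bookkeeping \emph{is} the proof: the paper establishes the exact relations $\sigma_q=\sigma_{q+1}\mp(d-u)$, $\tau_q=\tau_{q+1}\pm u$ (equation~\eqref{t1t}), hence $|\sigma_q-\tau_q|=d-|\sigma_{q+1}-\tau_{q+1}|$, and combines this with $|\sigma_q|\ge|\sigma_{q+1}|$, $|\tau_q|\ge|\tau_{q+1}|$ (equation~\eqref{idf}, using that $s_{p-2},s_{p-1}$ alternate in sign) and the fact that $\sigma_n,\tau_n$ always have opposite signs to get $|\sigma_q-\tau_q|\ge|\sigma_{q+1}-\tau_{q+1}|$; both halves of viii) then drop out at once. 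Your direct $\le\frac d2$ bound plus the identity $|\sigma_q-\tau_q|=d-|\sigma_{q+1}-\tau_{q+1}|$ would also finish, but you must actually derive that identity rather than gesture at ``picking up a full $d$ contribution.''
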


\begin{proof}
i),\, ii),\, and iii) follow easily by induction. 
 iv) can be deduced from ii) and  Lemma \ref{corr}-ii) and iii).
\par To prove v), note that  $|\sigma_n|=|s_{\ell}-(n-m_\ell)s_{\ell+1}|=|s_\ell|+(n-m_\ell)|s_{\ell+1}|$ due to  Lemma \ref{corr}-ii). This shows that,  if $\ell\leq p-3,$
\begin{equation}\label{eqref}
 |\sigma_n|\leq |s_\ell-q_{\ell+1} s_{\ell+1}|= |s_{\ell+2}|\leq\frac{d-u}{a_{\ell+1}}<d-u
\end{equation} by   Lemma \ref{corr}-iv) and the fact that $a_{\ell+1}>a_{p-2}>1.$ An analogue bound holds for $\tau_n$ by using $t_\ell$ instead of $s_\ell$ in \eqref{eqref}. 
\par If $\ell=p-2,$ we have that \eqref{eqref} holds but without the last strict inequality. On the other hand, the fact that
$\sigma_n\,u+\tau_n(d-u)=b_n,
$
with $0<|b_n|<u$ holds, shows immediately that $|\sigma_n|<d-u$ and also that $|\tau_n|<u.$
\par For vi), as 
$(\sigma_{q+1},\tau_{q+1})=(\sigma_{m_{p-1}},\tau_{m_{p-1}})=(s_{p-1},t_{p-1}),
$  the claim also holds for this pair due to Lemma \ref{corr}-iv), and the fact that $a_{p-2}>1$ (as $a_{p-1}=1$).
\par
Now we will prove vii). By v) and vi), it is enough to prove the claim for $m_{p-2}\leq j<q+1.$ But by the ascending condition given in iii),it will suffice to show that the  claim holds for $j=q.$ But it is easy to see in this case that
\begin{equation}\label{t1t}
\begin{array}{cccccc}
\sigma_q&=&\left\{\begin{array}{lcr}
\sigma_{q+1}-(d-u)&\mbox{if}&\,\sigma_{q+1}>0\\
\sigma_{q+1}+(d-u)&\mbox{if}&\,\sigma_{q+1}\leq0
\end{array}
\right. & 
\tau_q&=&\left\{\begin{array}{lcr}
\tau_{q+1}-u&\mbox{if}&\tau_{q+1}>0\\
\tau_{q+1}+u&\mbox{if}&\tau_{q+1}\leq0
\end{array}
\right.
\end{array},
\end{equation}
so the claim follows straightforwardly, as it holds for $\sigma_{q+1}$ and $\tau_{q+1},$ thanks to vi).
\par We are left to prove viii). The  identities given in \eqref{t1t} imply that
$$|\sigma_q-\tau_q|=d-|\sigma_{q+1}-\tau_{q+1}|.
$$
On the other hand, as $m_{p-2}\leq q<q+1=m_{p-1},$ and $|s_{p-1}|<|s_{p-2}|$ due to Lemma \ref{corr}-iii), we have
\begin{equation}\label{idf}
|\sigma_q|=|s_{p-2}-(q-m_{p-2})s_{p-1}|\geq|s_{p-1}|=|\sigma_{q+1}|,
\end{equation}
the first equality thanks to iii), and the first inequality due to the fact that $s_{p-1}$ and $s_{p-2}$ have different signs (see Lemma \ref{corr}-ii) ). An analogous inequality holds for $|\tau_q|$ and $|\tau_{q+1}|.$ Identity \eqref{idf} and these two inequalities,  combined with the fact that $\sigma_n$ and $\tau_n$ have opposite signs for all  $n=1,\ldots, q+1,$ complete the proof of the claim.
\end{proof}

\bigskip
\section{Gr\"obner bases on $\S^m$ and syzygies}\label{GB}
In this section we will recall definitions and properties of Gr\"obner bases of submodules  of $\S^m$ for $m\in\N$. All the known material is classical, we refer the reader to Chapter $3$ in \cite{AL94} for proofs and further references.
\par
Denote with $\{\e_1,\ldots,\e_m\}$ the canonical basis of $\S^m$. Recall that a {\em monomial} in $\S^m$ is a vector of the type $\T^{\ualpha}\X^{\ubeta}\,\e_i,\,1\leq i\leq m$, with $\T^{\ualpha}\X^{\ubeta}$ being a monomial in $S$. A {\em term order} on the monomials in $\S^m$ is a total order $\prec$ on these monomials satisfying:
\begin{enumerate}
\item $\bU\prec \T^{\ualpha}\X^{\ubeta}\,\bU$ for every monomial $\bU\in \S^m$ and $\T^{\ualpha}\X^{\ubeta}\neq1,$ and
\item if $\bU,\,\bV$ are monomials in $\S^m$ with $\bU\prec\bV,$ then $\T^{\ualpha}\X^{\ubeta}\bU\prec \T^{\ualpha}\X^{\ubeta}\,\bV$ for every monomial $\T^{\ualpha}\X^{\ubeta}\in \S.$
\end{enumerate}
With these definitions, for an element $\bff\in \S^m$,  one defines the {\em leading monomial}, the {\em leading coefficient} and the {\em leading term} of $\bff$ in the usual way, and denotes them with
$\mbox{lm}(\bff),\,\mbox{lc}(\bff)$ and $\mbox{lt}(\bff)$ respectively.
\par
Given a submodule $M\subset\S^m,$ a set $G=\{\bg_1,\ldots,\bg_t\}\subset M$ is called a {\em Gr\"obner basis} for $M$ with respect to $\prec$ if and only if for any $\bff\in M\setminus\{\bf0\},$ there exists $i\in\{1,\ldots, t\}$ such that $\mbox{lt}(\bg_i)$ divides $\mbox{lm}(\bff).$
\par
\begin{definition}
A Gr\"obner basis $G=\{\bg_1,\,\ldots,\, \bg_t\}$ of $M$ is called {\em minimal} if for all $i,\,\mbox{lc}(\bg_i)=1$ and for $i\neq j,\,\mbox{lm}(\bg_i)$ does not divide $\mbox{lm}(\bg_j).$
\par A minimal Gr\"obner basis $G=\{\bg_1,\,\ldots,\,\bg_t\}$ of $M$ is said to be {\em reduced} if, for all $i,$ no nonzero term in $\bg_i$ is divisible by any $\mbox{lm}(\bg_j)$ for any $j\neq i.$
\end{definition}

For a given monomial order $\prec$ on $\S^m$ and any submodule $M\subset\S^m$, the {\em initial submodule} of $M$, which we denote with $\mbox{lt}(M)$, is the submodule of $\S^m$ generated by
$\{\mbox{lt}(\bff),\,\bff\in M\}.$

\begin{theorem}
For a given term order $\prec,$ every nonzero submodule $M\subset\S^m$ has a unique reduced Gr\"obner basis $G$.
If $G$ is a Gr\"obner basis of $M$, then $\langle G\rangle =M,$ and
$\mbox{\rm lt}(M)=\langle \mbox{\rm lt}(\bg),\,\bg\in G\rangle.$
\end{theorem}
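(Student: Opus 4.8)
The plan is to follow the classical development of Gr\"obner basis theory for submodules of a free module over a polynomial ring, as in \cite[Chapter 3]{AL94}; the only non-formal ingredient is Dickson's Lemma, which ensures both that $\S^m$ is a Noetherian $\S$-module and that the given term order $\prec$ is a well-ordering on the monomials of $\S^m$ (an infinite $\prec$-descending chain of monomials would, by Noetherianity of the monomial submodules it successively generates, contain a pair $\bU_i,\bU_j$ with $i<j$ and $\bU_i\mid\bU_j$, hence $\bU_i\preceq\bU_j$, a contradiction).

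First I would prove existence of a Gr\"obner basis together with the two displayed identities. For a nonzero submodule $M$, the initial submodule $\mathrm{lt}(M)$ is a monomial submodule of $\S^m$, so by Dickson's Lemma it is generated by finitely many monomials, each of the form $\mathrm{lt}(\bg)$ with $\bg\in M$; the resulting finite set $G=\{\bg_1,\dots,\bg_t\}$ is then a Gr\"obner basis of $M$ directly from the definition (any $\bff\in M\setminus\{\mathbf0\}$ has $\mathrm{lt}(\bff)\in\mathrm{lt}(M)$, hence divisible by some $\mathrm{lt}(\bg_i)$), and it also yields $\mathrm{lt}(M)=\langle\mathrm{lt}(\bg):\bg\in G\rangle$ because the $\mathrm{lt}(\bg_i)$ were chosen to generate $\mathrm{lt}(M)$; the same divisibility observation gives this last identity for \emph{any} Gr\"obner basis $G$. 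To see $\langle G\rangle=M$ for an arbitrary Gr\"obner basis, I would argue by contradiction: choose $\bff\in M\setminus\langle G\rangle$ with $\mathrm{lm}(\bff)$ minimal (using well-orderedness), cancel its leading term against a monomial multiple of some $\bg_i$ with $\mathrm{lt}(\bg_i)\mid\mathrm{lt}(\bff)$, and note that the difference lies in $M$ with strictly smaller leading monomial, hence in $\langle G\rangle$, forcing $\bff\in\langle G\rangle$.

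Next I would produce a reduced Gr\"obner basis. Starting from any Gr\"obner basis, one first passes to a minimal one by rescaling leading coefficients to $1$ and repeatedly discarding any $\bg_i$ whose leading monomial is divisible by some $\mathrm{lm}(\bg_j)$, $j\ne i$ --- this preserves the property of being a Gr\"obner basis since the surviving leading terms still generate $\mathrm{lt}(M)$. For a minimal basis $\{\bg_1,\dots,\bg_t\}$ the leading monomials are pairwise non-divisible, so replacing each $\bg_i$ in turn --- always dividing by the current forms of the remaining generators --- by the remainder of that division leaves $\mathrm{lt}(\bg_i)$ unchanged, keeps the set a Gr\"obner basis with the same leading terms, and removes from $\bg_i$ every tail monomial divisible by some $\mathrm{lm}(\bg_j)$, $j\ne i$; since subsequent replacements never touch leading monomials, the reducedness attained for $\bg_i$ survives, and each individual division step strictly lowers the element in the well-order, so the procedure terminates at a reduced Gr\"obner basis.

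Finally, uniqueness. If $G=\{\bg_i\}$ and $G'=\{\bg'_j\}$ are two reduced Gr\"obner bases of $M$, then $\{\mathrm{lm}(\bg_i)\}$ and $\{\mathrm{lm}(\bg'_j)\}$ are both minimal monomial generating sets of the monomial submodule $\mathrm{lt}(M)$; such a generating set is unique --- it is exactly the set of monomials of $\mathrm{lt}(M)$ minimal for divisibility --- so after reindexing $\mathrm{lm}(\bg_i)=\mathrm{lm}(\bg'_i)$ for all $i$, with leading coefficients $1$. Then $\bg_i-\bg'_i\in M$; its leading terms cancel, so every monomial appearing in it is a non-leading monomial of $\bg_i$ or of $\bg'_i$, hence (by reducedness of $G$, resp. $G'$, together with $\mathrm{lm}(\bg'_k)=\mathrm{lm}(\bg_k)$) divisible by no $\mathrm{lm}(\bg_k)$. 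Were $\bg_i-\bg'_i\ne\mathbf0$, the Gr\"obner basis property of $G$ would force its leading monomial to be divisible by some $\mathrm{lm}(\bg_k)$, a contradiction; hence $\bg_i=\bg'_i$. The only points that genuinely need care are the bookkeeping ensuring the reduction step simultaneously reduces all generators, and the uniqueness of minimal monomial generators of a monomial submodule (again a consequence of Dickson's Lemma); everything else is formal.
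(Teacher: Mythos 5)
Your proof is correct and follows exactly the classical development that the paper defers to: Dickson's Lemma and well-foundedness of $\prec$, existence of a Gr\"obner basis from a finite monomial generating set of $\mathrm{lt}(M)$, the descent argument for $\langle G\rangle=M$, passage to a minimal and then reduced basis, and uniqueness by comparing tail monomials of $\bg_i-\bg'_i$. The paper itself supplies no argument but simply cites Theorems 3.5.14, 3.5.22 and Corollary 3.5.15 of \cite{AL94}, whose proofs are precisely the ones you have reproduced.
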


\begin{proof}
See \cite[Theorems 3.5.14 \& 3.5.22,\,\& Corollary 3.5.15]{AL94}.
\end{proof}
\smallskip
We now turn our attention to modules of syzygies of submodules of $\S^m.$  Let $\T^{\ualpha}\X^{\ubeta}\e_i,\,\T^{\ualpha'}\X^{\ubeta'}\e_j$ be two monomials in $\S^m, $ the {\em least common multiple} of these two monomials (denoted $\mbox{lcm}\big(\T^{\ualpha}\X^{\ubeta}\e_i, \T^{\ualpha'}\X^{\ubeta'}\e_j\big)$) is equal to either ${\bf0}$ if $i\neq j,$ or $\T^{\max\{\ualpha,\,\ualpha'\}}\X^{\max\{\ubeta,\,\ubeta'\}}\e_i$ otherwise.
\par
Let $\bff\neq0\neq\bg\in\S^m,$ the vector
\begin{equation}\label{spol}
S(\bff,\bg)=\frac{\mbox{lcm}\big(\mbox{lm}(\bff), \mbox{lm}(\bg)\big)}{\mbox{lt}(\bff)}\,\bff-
\frac{\mbox{lcm}\big(\mbox{lm}(\bff), \mbox{lm}(\bg)\big)}{\mbox{lt}(\bg)}\,\bg\in\S^m
\end{equation}
is called the {\em S-polynomial} of $\bff,\,\bg.$ Note that the S-polynomial is actually a vector of polynomials.
\begin{theorem}
Let $G=\{\bg_1,\ldots,\bg_t\}$ be a set of non-zero vectors in $\S^m$, and $\prec$ a monomial order in $\S^m.$ Then $G$ is a Gr\"obner basis for the submodule $M=\langle \bg_1,\ldots,\bg_t\rangle\subset\S^m$ if and only if for all $i\neq j,$ we can write
\begin{equation}\label{sui}
S(\bg_i, \bg_j)=\sum_{\nu=1}^tF_{ij\nu}(\T,\X)\bg_\nu,
\end{equation}
with $F_{ij\nu}(\T,\X)\in S,$ such that
\begin{equation}\label{gcond}
\max_{1\leq\nu\leq t}\{\mbox{\rm lm}\big(F_{ij\nu}(\T,\X)\mbox{\rm lm}(\bg_\nu)\big)\}=\mbox{\rm lm}\left(S(\bg_i, \bg_j)\right).
\end{equation}
\end{theorem}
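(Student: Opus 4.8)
The plan is to establish the module version of Buchberger's criterion by the classical argument, the only extra ingredient being that every monomial of $\S^m$ carries a basis vector ("colour") $\e_k$, and that whenever several monomials $\T^\ualpha\X^\ubeta\,\e_k$ share the same colour the situation collapses to the polynomial case. Note first that $M=\langle\bg_1,\dots,\bg_t\rangle$ is part of the hypothesis, so what must be shown is only the leading‑term divisibility property in the definition of a Gr\"obner basis.

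For the ``only if'' direction I would argue as follows: if $G$ is a Gr\"obner basis, then each $S(\bg_i,\bg_j)$ lies in $M$, and running the division algorithm of $\S^m$ on it with respect to $G$ yields remainder $\mathbf 0$, hence a representation $S(\bg_i,\bg_j)=\sum_\nu F_{ij\nu}(\T,\X)\bg_\nu$ in which $\mbox{lm}\big(F_{ij\nu}(\T,\X)\bg_\nu\big)\preceq\mbox{lm}(S(\bg_i,\bg_j))$ for every $\nu$ with $F_{ij\nu}\neq 0$. Since the right‑hand side equals $S(\bg_i,\bg_j)$, if this vector is nonzero its leading monomial forces the maximum in \eqref{gcond} to be attained; if $S(\bg_i,\bg_j)=\mathbf 0$ the condition is vacuous. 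This gives \eqref{sui}--\eqref{gcond}.

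For the ``if'' direction I would take $\bff\in M\setminus\{\mathbf 0\}$ and, among all representations $\bff=\sum_i h_i\bg_i$, choose one minimizing $\delta:=\max_i\mbox{lm}(h_i\bg_i)$ (possible since $\prec$ is a well‑order). Always $\mbox{lm}(\bff)\preceq\delta$. If $\mbox{lm}(\bff)=\delta$, the $\delta$‑homogeneous parts of $\sum_i h_i\bg_i$ cannot all cancel (their sum is $\mbox{lt}(\bff)\neq\mathbf 0$), so $\mbox{lm}(h_i\bg_i)=\delta$ for some $i$, whence $\mbox{lm}(\bg_i)$ divides $\mbox{lm}(\bff)$ and we are done. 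Otherwise $\mbox{lm}(\bff)\prec\delta$, and I would derive a contradiction with minimality. Put $S=\{i:\mbox{lm}(h_i\bg_i)=\delta\}$; all $\mbox{lm}(\bg_i)$ with $i\in S$ carry the same colour $\e_k$ (the one occurring in $\delta$), and $\sum_{i\in S}\mbox{lt}(h_i)\mbox{lt}(\bg_i)=\mathbf 0$. Exactly as in the polynomial setting (the ``syzygies among leading terms of the same colour are generated by the $S$‑pairs'' lemma), one rewrites $\sum_{i\in S}\mbox{lt}(h_i)\bg_i$ as an $\S$‑combination of monomial multiples $\T^\ualpha\X^\ubeta\cdot S(\bg_i,\bg_j)$ with $i,j\in S$, each such multiple having leading monomial strictly below $\delta$. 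Substituting for the $S(\bg_i,\bg_j)$ their representations \eqref{sui}, whose bound \eqref{gcond} guarantees that each $\T^\ualpha\X^\ubeta F_{ij\nu}(\T,\X)\bg_\nu$ still has leading monomial strictly below $\delta$, and combining this with the observation that the tails $h_i-\mbox{lt}(h_i)$ for $i\in S$, and the terms $h_i\bg_i$ for $i\notin S$, already have leading monomial strictly below $\delta$, produces a representation of $\bff$ with $\max_i\mbox{lm}(h_i\bg_i)\prec\delta$ — contradiction. Hence $\mbox{lm}(\bff)=\delta$ in all cases, so $G$ is a Gr\"obner basis of $M$.

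The delicate but routine part is the bookkeeping in the last step: after replacing each $S$‑polynomial by \eqref{sui} one must verify that no term of leading monomial $\delta$ is re‑created, which is precisely the role of the normalization \eqref{gcond}. The only genuinely structural point is the reduction to the polynomial case via the colour $\e_k$; once that is in place the classical monomial‑syzygy lemma (see \cite[\S3.5]{AL94}) applies verbatim, so I expect no real obstacle beyond careful accounting of leading monomials.
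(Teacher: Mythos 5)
The paper does not prove this theorem itself; the proof is simply the citation ``\cite[Theorem 3.5.19]{AL94}.'' Your sketch is a correct reconstruction of the classical Buchberger‐criterion argument for modules — the ``only if'' via the division algorithm with zero remainder, the ``if'' via the representation of $\bff$ minimizing $\delta=\max_i\operatorname{lm}(h_i\bg_i)$, the reduction to a single basis vector $\e_k$ (your ``colour''), the monomial‐syzygy lemma, and the bound \eqref{gcond} to drop strictly below $\delta$ — which is essentially what the cited reference contains, so there is nothing to contrast beyond the fact that the paper defers to the textbook.
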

\begin{proof}
\cite[Theorem 3.5.19]{AL94}
\end{proof}
\smallskip
For a sequence $\bff_1,\ldots,\bff_s\in\S^m$, the {\em syzygy} module of this sequence is the submodule of $\S^s$ defined as
\begin{equation}\label{sis}
\mbox{\rm syz}(\bff_1,\ldots,\bff_s)=\{(\bh_1,\ldots,\bh_s)\in\S^s:\,\sum_{j=1}^s\bh_j\bff_j={\bf0}\}.
\end{equation}
Suppose that $G=\{\bg_1,\ldots,\bg_t\}$ is a Gr\"obner basis of a submodule $M$ of $\S^m$ for a term order $\prec.$ Let $\{\tilde{\e}_1,\ldots,\tilde{\e}_t\}$ be the canonical basis of $\S^t,$ and write $S(\bg_i,\bg_j)$ with $1\leq i<j\leq t$ as in \eqref{sui}. Set $\bX_{i,j}:=\mbox{lcm}\big(\mbox{lm}(\bg_i), \mbox{lm}(\bg_j)\big)$ and
\begin{equation}\label{bs}
\bs_{i,j}=\frac{\bX_{i,j}}{\mbox{lt}(\bg_i)}\,\tilde{\e}_i-
\frac{\bX_{i,j}}{\mbox{lt}(\bg_j)}\,\tilde{\e}_j-\sum_{\nu=1}^tF_{ij\nu}(\T,\X)\tilde{\e_\nu}\in\S^t.
\end{equation}
By \eqref{sis}, we easily see that $\bs_{i,j}\in\mbox{syz}(\bg_1,\ldots, \bg_t).$\par
Let ${\cF}=\{\bff_1,\ldots,\bff_t\}$ be a sequence  non-zero vectors in $\S^m,$ and $\prec$ a term order in $\S^m$. We define an order $\prec_\cF$ on the monomials of $\S^t$ as follows
\begin{equation}\label{prorder}
\T^{\ualpha}\X^{\ubeta}\tilde{\e}_i\prec_\cF\T^{\ualpha'}\X^{\ubeta'}\tilde{\e}_j\iff
\left\{\begin{array}{llr}
\mbox{lm}(\T^{\ualpha}\X^{\ubeta}\bff_i)\prec\mbox{lm}(\T^{\ualpha'}\X^{\ubeta'}\bff_j)&\,\mbox{or}&\\
\mbox{lm}(\T^{\ualpha}\X^{\ubeta}\bff_i)=\mbox{lm}(\T^{\ualpha'}\X^{\ubeta'}\bff_j)&\,\mbox{and} &\,j<i.
\end{array}
\right.
\end{equation}
We call $\prec_\cF$ the {\em order induced} by $\cF$.

\begin{theorem}\label{3713}
Let $\prec$ be a term ordering on $\S^m$. If $G=\{\bg_1,\ldots,\,\bg_t\}$ is a Gr\"obner basis of a submodule $M\subset\S^m,$ then $\{\bs_{i,j}\}_{1\leq i<j\leq t}$ is a Gr\"obner basis of $\mbox{\rm syz}(\bg_1,\ldots,\bg_t)\subset\S^t$ with respect to $\prec_G.$ Moreover,
if $\bs_{i,j}\neq{\bf0},$ then
\begin{equation}\label{liding}
\mbox{\rm lm}(\bs_{i,j})=\frac{\bX_{i,j}}{\mbox{\rm lm}(\bg_i)}\,\tilde{\e}_i, \quad 1\leq i<j\leq t.
\end{equation}
\end{theorem}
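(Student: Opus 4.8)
The statement is Schreyer's theorem on syzygies of Gr\"obner bases; as indicated in the text it can be quoted from \cite[Section 3.7]{AL94}, but here is the route I would take. The plan is to first determine $\mbox{lm}(\bs_{i,j})$ with respect to the induced order $\prec_G$ of \eqref{prorder}, thereby proving \eqref{liding}, and then to feed this into the standard ``cancellation of the largest term'' argument applied to an arbitrary element of $\mbox{syz}(\bg_1,\dots,\bg_t)$.

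First I would prove the leading-monomial formula \eqref{liding}. By Buchberger's criterion (the criterion quoted above from \cite[Theorem 3.5.19]{AL94}), since $G$ is a Gr\"obner basis the representation \eqref{sui} used to build $\bs_{i,j}$ in \eqref{bs} may be chosen so that $\mbox{lm}\big(F_{ij\nu}(\T,\X)\,\mbox{lm}(\bg_\nu)\big)\preceq\mbox{lm}\big(S(\bg_i,\bg_j)\big)$ for all $\nu$; moreover $\mbox{lm}(S(\bg_i,\bg_j))\prec\bX_{i,j}$ (when $S(\bg_i,\bg_j)\neq\mathbf{0}$) because the two leading terms cancel in the $S$-polynomial \eqref{spol}. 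Passing the three groups of terms of \eqref{bs} through $\tilde\e_\nu\mapsto\bg_\nu$: the monomials $\frac{\bX_{i,j}}{\mbox{lt}(\bg_i)}\tilde\e_i$ and $\frac{\bX_{i,j}}{\mbox{lt}(\bg_j)}\tilde\e_j$ both correspond to the module monomial $\bX_{i,j}$ in $\S^m$, while every monomial of every $F_{ij\nu}(\T,\X)\tilde\e_\nu$ corresponds to something strictly below $\bX_{i,j}$. By the tie-break in \eqref{prorder}, among the first two the $\tilde\e$ of smaller index wins, i.e. $\frac{\bX_{i,j}}{\mbox{lt}(\bg_i)}\tilde\e_i$ since $i<j$; and the bound $\mbox{lm}(F_{iji})\prec\bX_{i,j}/\mbox{lm}(\bg_i)$ shows this monomial is not cancelled in \eqref{bs}, so $\mbox{lm}(\bs_{i,j})=\frac{\bX_{i,j}}{\mbox{lm}(\bg_i)}\tilde\e_i$ with leading coefficient $\mbox{lc}(\bg_i)^{-1}$. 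In particular $\bs_{i,j}\neq\mathbf{0}$ whenever $\bX_{i,j}\neq\mathbf{0}$, that is, whenever $\mbox{lm}(\bg_i)$ and $\mbox{lm}(\bg_j)$ lie in the same component of $\S^m$.

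Next I would show $\{\bs_{i,j}\}_{i<j}$ is a Gr\"obner basis of $\mbox{syz}(\bg_1,\dots,\bg_t)$ for $\prec_G$. Let $\bh=(\bh_1,\dots,\bh_t)$ be a nonzero syzygy, put $\bU=\max_\nu\{\mbox{lm}(\bh_\nu)\mbox{lm}(\bg_\nu)\}$ (a module monomial of $\S^m$, say in component $c$), and let $J=\{\nu:\mbox{lm}(\bh_\nu)\mbox{lm}(\bg_\nu)=\bU\}$ and $i_0=\min J$. Unwinding \eqref{prorder} gives $\mbox{lm}(\bh)=\mbox{lm}(\bh_{i_0})\tilde\e_{i_0}$. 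Because $\sum_\nu\bh_\nu\bg_\nu=\mathbf{0}$, the contributions at level $\bU$ in component $c$ must cancel, forcing $|J|\geq2$; pick $j\in J\setminus\{i_0\}$, necessarily $j>i_0$. Then $\mbox{lm}(\bg_{i_0})$ and $\mbox{lm}(\bg_j)$ both divide $\bU$, so they lie in component $c$, whence $\bX_{i_0,j}\neq\mathbf{0}$ (so $\mbox{lm}(\bs_{i_0,j})$ is defined, by the previous step) and $\bX_{i_0,j}\mid\bU$. Consequently $\frac{\bX_{i_0,j}}{\mbox{lm}(\bg_{i_0})}$ divides $\frac{\bU}{\mbox{lm}(\bg_{i_0})}=\mbox{lm}(\bh_{i_0})$, i.e. $\mbox{lm}(\bs_{i_0,j})$ divides $\mbox{lm}(\bh)$. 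This is precisely the defining property of a Gr\"obner basis, and \eqref{liding} was established en route.

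The only genuine difficulty here is bookkeeping rather than anything conceptual: keeping the induced order $\prec_G$ straight (in particular its tie-breaking clause), remembering that \eqref{sui}, and hence $\bs_{i,j}$, depends on a \emph{choice} of representation, and verifying in each relevant case that $\bs_{i_0,j}\neq\mathbf{0}$ so that $\mbox{lm}(\bs_{i_0,j})$ is meaningful --- all of which follows from the component/degree bound supplied by \eqref{gcond}. No homological machinery is needed; everything is an elementary consequence of Buchberger's criterion.
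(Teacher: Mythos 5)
Your proof is correct and is the standard Schreyer argument. The paper itself does not prove the statement but simply cites \cite[Lemma 3.7.9 \& Theorem 3.7.13]{AL94}, whose proof is the same two-step scheme you carry out: establish \eqref{liding} from the bound \eqref{gcond} and the tie-break in $\prec_G$ (smaller index wins), then show the leading monomial of any nonzero syzygy $\bh$ is divisible by some $\mbox{lm}(\bs_{i_0,j})$ via the cancellation-at-top-level argument producing $|J|\geq 2$.
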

\begin{proof}
See \cite[Lemma 3.7.9 \& Theorem 3.7.13]{AL94}.
\end{proof}
\smallskip
If $m=1,$ the reader will find the usual definitions and properties of Gr\"obner bases of ideals in a ring of polynomials with coefficients in a field. 
\bigskip
\section{Minimal generators of $\mbox{ker}(\Phi_0)$}\label{minimal}
In this section we will prove Theorem \ref{mgenerators}, i.e. we will show that the family of $q+2$ polynomials $\{F_{n,b_n}(\T,\X)\}_{n=1,\ldots, q+2}$ defined in \eqref{fgen} is both a reduced Gr\"obner basis of $\mbox{ker}(\Phi_0)\subset\S,$ and a minimal set of generators of this ideal.

\begin{proposition}\label{est}
For any monomial order $\prec$ in $\S$, a reduced Gr\"obner basis of $\mbox{\rm ker}(\Phi_0)$ consists of binomials.
\end{proposition}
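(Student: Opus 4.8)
The plan is to reduce the statement to the single structural fact that $\mbox{ker}(\Phi_0)$ is a lattice ideal, and then to check that the Buchberger machinery recalled in Section~\ref{GB} never leaves the class of binomials. First I would record that $\Phi_0$ in~\eqref{presentacion} has image the affine semigroup ring $\K[T_0,\,T_1,\,ZT_0^d,\,ZT_0^{d-u}T_1^u,\,ZT_1^d]\subset\K[T_0,T_1,Z]$; thus $\mbox{ker}(\Phi_0)$ is the toric (codimension~$2$ lattice) ideal attached to the monomial map~\eqref{presentacion}, cf.~\cite{PS98}, and in particular it is generated by pure-difference binomials $\T^{\ualpha}\X^{\ubeta}-\T^{\ualpha'}\X^{\ubeta'}$. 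For completeness I would include the elementary proof of this: fix any monomial order, take $0\neq f\in\mbox{ker}(\Phi_0)$ with leading term $c\,\bU$; since $\Phi_0(f)=0$ and $\Phi_0(\bU)$ is a nonzero monomial of $\K[T_0,T_1,Z]$, it must be cancelled in $\Phi_0(f)$ by the image of another monomial $\bV$ occurring in $f$, so $\bU-\bV\in\mbox{ker}(\Phi_0)$ and $f-c(\bU-\bV)$ has strictly fewer terms than $f$; an induction on the number of terms (whose base case, two terms, is immediate) then writes $f$ as a polynomial combination of binomials lying in $\mbox{ker}(\Phi_0)$.

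Second, I would check that Buchberger's algorithm, started from a pure-difference binomial generating set, produces nothing but pure-difference binomials (and monomials, which are excluded below). If $\bU_1-\bU_2$ and $\bV_1-\bV_2$ are such binomials with leading monomials $\bU_1,\bV_1$ and $L=\mbox{lcm}(\bU_1,\bV_1)$, then $S(\bU_1-\bU_2,\,\bV_1-\bV_2)=\frac{L}{\bV_1}\bV_2-\frac{L}{\bU_1}\bU_2$ is again a difference of two monomials with unit coefficients; and reducing a binomial $\bU_1-\bU_2$ by a binomial $\bV_1-\bV_2$ gives $\frac{\bU_1}{\bV_1}\bV_2-\bU_2$ when $\bV_1\mid\bU_1$, or $\bU_1-\frac{\bU_2}{\bV_1}\bV_2$ when $\bV_1\mid\bU_2$, each again a difference of two monomials with unit coefficients. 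Hence every polynomial arising in the algorithm is such a binomial or a monomial. But $\mbox{ker}(\Phi_0)$ is prime, being the kernel of a $\K$-algebra homomorphism onto the domain $\mbox{Rees}(\I)\subset\K[T_0,T_1,Z]$, so it contains no monomial; therefore no nonzero element arising in the computation is a monomial, and Buchberger's algorithm outputs a Gr\"obner basis of $\mbox{ker}(\Phi_0)$ all of whose elements are binomials.

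Finally I would pass from this binomial Gr\"obner basis to the reduced one, which exists and is unique by the material of Section~\ref{GB}: discard every element whose leading monomial is divisible by that of another, rescale leading coefficients to $1$, and replace each surviving generator by its normal form modulo the others. In the resulting minimal basis the leading monomials are pairwise non-divisible, so the only reductions performed act on the single trailing monomial of each binomial; by the computation above every such step keeps the element a binomial, and the trailing monomial can never reduce to $0$, since otherwise the leading monomial alone would lie in $\mbox{ker}(\Phi_0)$, contradicting primality. Thus the reduced Gr\"obner basis consists of binomials, for \emph{every} monomial order $\prec$. The one step that genuinely requires care is this last one, namely verifying that autoreduction neither destroys the binomial shape nor annihilates a generator; the preceding two steps are, respectively, a citation (or the short elementary argument above) and routine monomial bookkeeping.
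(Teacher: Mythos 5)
Your proof is correct and is a self-contained unwinding of the reference the paper cites: the published proof is simply a pointer to \cite[Proposition 1.1 and Corollary 1.9]{ES96}, and your three-step argument (toric presentation of $\ker(\Phi_0)$ implies generation by pure-difference binomials, Buchberger's algorithm preserves the binomial shape, autoreduction preserves it and cannot annihilate the trailing term by primality of $\ker(\Phi_0)$) is precisely the content of those cited results. The step you single out as needing care, that in passing to the reduced basis only the trailing monomial is modified, always to another single monomial, and never to zero, is indeed where the argument would be incomplete if glossed over, and you handle it correctly.
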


\begin{proof}
See \cite[Proposition 1.1 and Corollary 1.9]{ES96}.
\end{proof}
\smallskip
Clearly $\mbox{ker}(\Phi_0)$ is a bihomogeneous ideal of $\S,$ so any of its irreducible homogeneous binomials have to be of the form $T_0^a\X^\ugamma-T_1^a\X^\udelta$ with $|\ugamma|=|\udelta|$ and $\gcd(\X^\ugamma,\X^\udelta)=1.$
We will get more precisions on the exponents in the following claim.
\begin{lemma}\label{easy}
If  $T_0^a\X^\ugamma-T_1^a\X^\udelta\in\mbox{\rm ker}(\Phi_0),$ with $\ugamma,\,\udelta\in\N^3$ such that $|\ugamma|=|\udelta|$ and $\gcd(\X^\ugamma,\X^\udelta)=1,$ then this binomial is one of the following
\begin{equation}\label{binomial}
\begin{array}{lcl}
T_0^aX_0^{\gamma_0}X_2^{\gamma_2}-T_1^aX_1^{\gamma_0+\gamma_2}& \mbox{with}& a=-u \gamma_0+(d-u)\gamma_2\\
T_0^aX_1^{\delta_0+\delta_2}-T_1^aX_0^{\delta_0}X_2^{\delta_2}& \mbox{with}& a=u \delta_0-(d-u)\delta_2\\
T_0^aX_2^{\delta_0+\delta_1}-T_1^aX_0^{\delta_0} X_1^{\delta_1}& \mbox{with}& a=d \delta_0+(d-u)\delta_2\\
T_0^aX_1^{\gamma_1} X_2^{\gamma_2}-T_1^aX_0^{\gamma_1+\gamma_2}& \mbox{with}& a=u \gamma_1+d\gamma_2.
\end{array}
\end{equation}
\end{lemma}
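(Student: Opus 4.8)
The plan is to classify, purely combinatorially, which binomials $T_0^a\X^\ugamma-T_1^a\X^\udelta$ can lie in $\mbox{ker}(\Phi_0)$. Applying $\Phi_0$ to such a binomial gives
$T_0^a(ZT_0^d)^{\gamma_0}(ZT_0^{d-u}T_1^u)^{\gamma_1}(ZT_1^d)^{\gamma_2}
-T_1^a(ZT_0^d)^{\delta_0}(ZT_0^{d-u}T_1^u)^{\delta_1}(ZT_1^d)^{\delta_2}$,
and the binomial is in the kernel exactly when these two monomials in $Z,T_0,T_1$ coincide. Since $|\ugamma|=|\udelta|$ the powers of $Z$ automatically match, so the condition reduces to equality of the $T_0$-exponent and of the $T_1$-exponent:
\[
a+d\gamma_0+(d-u)\gamma_1=d\delta_0+(d-u)\delta_1,\qquad
u\gamma_1+d\gamma_2=a+u\delta_1+d\delta_2.
\]
Subtracting these two identities eliminates $a$ and, after using $\gamma_0+\gamma_1+\gamma_2=\delta_0+\delta_1+\delta_2$, yields a single homogeneous linear relation among $\ugamma-\udelta$.

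Next I would exploit the coprimality hypothesis $\gcd(\X^\ugamma,\X^\udelta)=1$, which forces, for each of the three variables $X_0,X_1,X_2$, that at most one of $\gamma_j,\delta_j$ is nonzero. Thus the pair $(\X^\ugamma,\X^\udelta)$ is determined by a partition of $\{0,1,2\}$ into the ``$\gamma$-support'' and the ``$\delta$-support'', and I would go through the possibilities. The case where one support is empty is excluded (it would force $a=0$ and, since the remaining monomial is then $1$, give a trivial binomial, contradicting irreducibility — or more precisely $\X^\ugamma=\X^\udelta=1$). Up to swapping the roles of $T_0$ and $T_1$ (which just exchanges the two binomials with $a$ replaced by $-a$, and we take $a>0$ by convention), the genuinely distinct splittings of the support are: $\gamma$-support $=\{0,2\}$, $\delta$-support $=\{1\}$ (this is the first line of \eqref{binomial}); $\gamma$-support $=\{1\}$, $\delta$-support $=\{0,2\}$ (the second line); $\gamma$-support $=\{2\}$, $\delta$-support $=\{0,1\}$ (the third line); and $\gamma$-support $=\{1,2\}$, $\delta$-support $=\{0\}$ (the fourth line). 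The remaining splittings either reproduce these after the $T_0\leftrightarrow T_1$ swap, or are ruled out because the resulting value of $a$ would be forced to be $0$ or negative.

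In each of these four cases I would simply substitute the known zero coordinates into the two exponent identities above and solve for $a$. For instance, with $\gamma$-support $=\{0,2\}$ and $\delta$-support $=\{1\}$ we have $\gamma_1=0$, $\delta_0=\delta_2=0$, and $|\ugamma|=|\udelta|$ reads $\gamma_0+\gamma_2=\delta_1$; the $T_1$-identity $u\gamma_1+d\gamma_2=a+u\delta_1+d\delta_2$ becomes $d\gamma_2=a+u(\gamma_0+\gamma_2)$, i.e. $a=d\gamma_2-u\gamma_0-u\gamma_2=(d-u)\gamma_2-u\gamma_0$, which is exactly the relation $a=-u\gamma_0+(d-u)\gamma_2$ recorded in the first line of \eqref{binomial}; one checks the $T_0$-identity is then automatically satisfied. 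The other three cases are handled identically, and the displayed relations for $a$ fall out. I expect the only real care needed is the bookkeeping in the case analysis — making sure every support splitting has been accounted for and that the $T_0\leftrightarrow T_1$ symmetry is invoked correctly so that nothing genuinely new is missed — but there is no conceptual obstacle; it is a finite, elementary verification once the two exponent identities are written down.
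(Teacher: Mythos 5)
Your proof is correct and takes essentially the same route as the paper's (one-line) proof: apply $\Phi_0$ term by term, equate the $T_0$- and $T_1$-exponents, and classify the possible support splittings forced by $\gcd(\X^\ugamma,\X^\udelta)=1$. You have simply spelled out the case analysis the paper leaves implicit. Two small remarks. First, your phrase ``swapping the roles of $T_0$ and $T_1$'' is a bit misleading, since $\Phi_0$ is not symmetric under $T_0\leftrightarrow T_1$; what you actually use (and correctly describe in the parenthetical) is the relabeling $\ugamma\leftrightarrow\udelta$, $a\mapsto -a$, which converts the two remaining splittings $(\{0\},\{1,2\})$ and $(\{0,1\},\{2\})$ into lines $4$ and $3$ respectively; equivalently, with $a\geq0$ fixed, those two splittings force $a\leq0$ and hence triviality. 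Second, had you actually carried out the substitution in the third case rather than taking \eqref{binomial} on faith, you would have obtained $a=d\delta_0+(d-u)\delta_1$, not $a=d\delta_0+(d-u)\delta_2$; since $\delta_2=0$ in that case, the $\delta_2$ in the paper's third line is a typo for $\delta_1$. This is worth knowing both as a correction to the text and as a reminder to verify each branch of a ``handled identically'' case analysis.
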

\begin{proof}
By computing explicitly 
$\Phi_0\big(T_0^a\X^\ugamma-T_1^a\X^\udelta\big)$ with \eqref{presentacion}, and equating the latter to zero, we straightforwardly obtain that the possible distributions of supports are those appearing in \eqref{binomial}.
\end{proof}
\smallskip
\begin{definition}
Let $a,b,c$ be positive integers, with $a$ being a multiple of $\gcd(b,c).$ A solution $(\gamma_0,\delta_0)\in\N^2$ of the diophantine equation
$a=b\,\gamma-c\,\delta$
is said to be {\em minimal} if $$\gamma_0+\delta_0=\min\{\gamma'+\delta':\, a=b\,\gamma'-c\,\delta',\ (\gamma',\delta')\in\N^2\}.$$
\end{definition}
\begin{remark}\label{dioff}
It is easy to verify that there is a unique minimal solution for each triple $(a,b,c)$ of nonegative numbers with $\gcd(b,c)\,|\,a$ and $b\neq c.$ Indeed, if $(\gamma_0,\delta_0)$ is a minimal solution of  $a=b\,\gamma-c\,\delta$, then as all other integer solutions of this diophantine equation are of the form
\begin{equation}\label{soliton}
\left\{\begin{array}{ccl}
\gamma'&=&\gamma_0+\kappa\,\frac{c}{\gcd(b,c)}\\
\delta'&=&\delta_0+\kappa\,\frac{b}{\gcd(b,c)}.
\end{array}
\right. \ \kappa\in\Z,
\end{equation}
We deduce straightforwardly that a minimal solution with $a\neq0$ verifies either $\gamma_0<c,$ or $\delta_0<b,$ or both conditions at the same time. Moreover, if $(\gamma',\delta')$ is a nonegative solution of the diophantine equation $a=b\gamma-c\delta$ and either $\gamma'<c$ or $\delta'<b,$ then $(\gamma',\delta')$ is the minimal solution.
\end{remark}

 We recall the definition of $\prec_l,$ the lexicographic order on monomials of $\S$ given in the introduction, with $X_2\prec_l X_1\prec_l X_0\prec_l T_1\prec_l T_0$ if $\sigma_q>0$ or $X_2\prec_l X_0\prec_l X_1\prec_l T_1\prec_l T_0$ if $\sigma_q\leq0.$

There are always two elements in $\cF_0$ which are linear in $\X.$ Indeed, an explicit computation shows that they are the following:
\begin{equation}\label{sgen}
\begin{array}{lcl}
F_{1,d-u}(\T,\X)&=&T_0^{d-u}X_2-T_1^{d-u}X_1\\
F_{m_1,u}(\T,\X)&=&T_0^u X_1-T_1^u X_0.
\end{array}
\end{equation}
Via the identification $X_i\mapsto e_i,$ the set $\{F_{1,d-u}(\T,\X),\,F_{m_1,u}(\T,\X)\}$ turns out to be a basis of $\mbox{syz}(T_0^d,\,T_0^{d-u}T_1^u,\,T_1^d)$ regarded as an $R-$submodule of  $\R^3,$ which is a free $R$-module of rank $2,$   see \cite{cox08}. One can always show that (cf.\cite[Prop.3.6]{BJ03}):
$$\mbox{ker}(\Phi_0)\cong\langle F_{1,d-u}(\T,\X),\,F_{m_1,u}(\T,\X)\rangle:\langle T_0,T_1\rangle^\infty.
$$
\begin{remark}\label{seqq}
The elements in $\cF_0$ can be computed recursively by starting with $F_{1,d-u}(\T,\X)$ and $F_{m_1,u}(\T,\X)$ -the elements of the basis of $\mbox{\rm syz}(T_0^d,\,T_0^{d-u}T_1^u,\,T_1^d)-$  and applying recursively the properties we have used in \cite[Section 2.2]{CD13b}: as follows:
\begin{itemize}
\item Write $F_{1,d-u}(\T,\X)=T_0^{d-2u}{\bf T_0^u}X_2-T_1^{d-2u}{\bf T_1^u}X_1,$ and set
$$F_{i,d-2u}(\T,\X)=T_0^{d-2u}{\bf X_0}X_2-T_1^{d-2u}{\bf X_1}X_1,$$ with
$i=\left\{\begin{array}{cl}
2&\,\mbox{if}\ d-2u<u,\\
m_2&\, \mbox{otherwise.}
\end{array}
\right.
$
\item In general, recalling that $\ell(n)$ is such that $m_{\ell(n)-1}\leq n<m_{\ell(n)},$ we write
\begin{equation}\label{ee1}
\begin{array}{lcl}
F_{n,b_n}(\T,\X)&=&T_0^{b_n-b_{m_{\ell(n)}}}{\bf T_0^{b_{m_{\ell(n)}}}}\X^{\ualpha_n}-T_1^{b_n-b_{m_{\ell(n)}}}{\bf T_1^{b_{m_{\ell(n)}}}}\X^{\ubeta_n}\\
F_{m_{\ell(n)},b_{m_{\ell(n)}}}(\T,\X)&=&T_0^{b_{m_{\ell(n)}}}\X^{\ualpha'_n}-T_1^{b_{m_{\ell(n)}}}\X^{\ubeta'_n},
\end{array}
\end{equation}
and set
\begin{equation}\label{ee2}
F_{i,b_n-b_{m_{\ell(n)}}}:=T_0^{b_n-b_{m_{\ell(n)}}}\X^{\ualpha_n+\ubeta'_n}-T_1^{b_n-b_{m_{\ell(n)}}}\X^{\ubeta_n+\ualpha'_n},
\end{equation}
with \begin{equation}\label{ee3}
i=\left\{\begin{array}{cl}
n+1&\,\mbox{if}\ b_n-b_{m_{\ell(n)}}<b_{m_{\ell(n)}},\\
m_{\ell(n)+1}&\, \mbox{otherwise.}
\end{array}
\right.
\end{equation}
\end{itemize}
For instance, in the case $(d,u)=(10,3),$ the process is as follows:

$$\begin{array}{cclcl}
F_{3,3}(\T,\X)&=&T_0^3X_1-T_1^3X_0,&&\\
F_{1,7}(\T,\X)&=&T_0^7X_2-T_1^7X_1&=&T_0^4\cdot T_0^3X_2-T_1^4\cdot T_1^3X_1.
\end{array}$$
We set then
$$F_{2,4}(\T,\X)=T_0^4X_0X_2-T_1^4X_1^2=T_0\cdot T_0^3X_0X_2-T_1\cdot T_1^3X_1^2,$$
and a fortiori 
$$F_{6,1}(\T,\X)=T_0X_0^2X_2-T_1X_1^3.
$$
From here, everything goes straightforwardly:
$$\begin{array}{cclcl}
F_{3,3}(\T,\X)&=& T_0^3X_1- T_1^3X_0&=&T_0^2\cdot T_0X_1-T_1^2\cdot T_1X_0\\
F_{4,2}(\T,\X)&=&T_0^2 X_1^4-T_1^2X_0^3X_2&=&T_0\cdot T_0X_1^4-T_1\cdot T_1X_0^3X_2\\
F_{5,1}(\T,\X)&=&T_0X_1^7-T_1X_0^5X_2^2&&\\
F_{7,0}(\T,\X)&=&X_0^7X_2^3-X_1^7.&&
\end{array}
$$
\end{remark}

\smallskip
Via this algorithm, one gets a stronger inequality in Proposition \ref{connection}-iv), which will be useful in the sequel.
\begin{proposition}\label{stronzo}
If $n<k,\,k\neq m_{\ell(n)},$ then $|\sigma_n-\tau_n|<|\sigma_k-\tau_k|.$
\end{proposition}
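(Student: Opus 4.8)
The plan is to reduce the statement to a monotonicity property of the single quantity $D_j:=|\sigma_j-\tau_j|$. By Proposition~\ref{connection}-ii together with Lemma~\ref{corr}-ii, the integers $\sigma_j$ and $\tau_j$ always have opposite signs (one of them possibly zero), so $D_j=|\sigma_j|+|\tau_j|$; and, setting $e_\ell:=|s_\ell|+|t_\ell|$, the same two facts give the closed expression $D_j=e_\ell+(j-m_\ell)\,e_{\ell+1}$ for every $j$ with $m_\ell\le j<m_{\ell+1}$. Since each slope $e_{\ell+1}$ is a positive integer, $D_j$ is \emph{strictly increasing} in $j$ within every block $\{m_\ell,\dots,m_{\ell+1}-1\}$.

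First I would record the arithmetic of the $e_\ell$: from $s_{\ell+1}=s_{\ell-1}-q_\ell s_\ell$, $t_{\ell+1}=t_{\ell-1}-q_\ell t_\ell$ and the sign alternation one gets $e_0=e_1=1$ and $e_{\ell+1}=e_{\ell-1}+q_\ell e_\ell$, whence (as every $q_\ell\ge1$) $1=e_1<e_2<e_3<\cdots$. Feeding this recursion back into the closed formula yields the two facts I really need: at the last index of block $\ell$ one has $D_{m_{\ell+1}-1}=e_\ell+(q_{\ell+1}-1)e_{\ell+1}=e_{\ell+2}-e_{\ell+1}$, while at the head of the next block the value drops to $D_{m_{\ell+1}}=e_{\ell+1}$. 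This downward jump at $m_{\ell+1}$ is exactly the configuration excluded by the hypothesis $k\neq m_{\ell(n)}$.

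Then comes the case analysis. Fix $n$ with $m_L\le n<m_{L+1}$ (so $m_{\ell(n)}=m_{L+1}$; note $n\le q$ forces $L\le p-2$) and $k>n$ with $k\neq m_{L+1}$. If $k$ also lies in block $L$, then $D_n<D_k$ by within-block monotonicity. Otherwise $k\ge m_{L+1}$ and, $k$ being $\neq m_{L+1}$, in fact $k\ge m_{L+1}+1$. I would then bound the left-hand side by $D_n\le D_{m_{L+1}-1}=e_{L+2}-e_{L+1}<e_{L+2}$, and the right-hand side from below by $D_k\ge e_{L+2}$: if $k$ is in block $L+1$ then $D_k=e_{L+1}+(k-m_{L+1})e_{L+2}\ge e_{L+1}+e_{L+2}>e_{L+2}$, and if $k$ lies in some block $L'\ge L+2$ then $D_k\ge e_{L'}\ge e_{L+2}$ since $(e_\ell)$ is increasing. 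In every case $D_n<D_k$, as claimed; this in particular sharpens Proposition~\ref{connection}-iv.

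There is no genuinely hard step here; the only points requiring care are the two competing conventions for block labels (the indices $m_\ell$ versus $m_{\ell(n)-1}\le n<m_{\ell(n)}$) and the degenerate last block, where $m_p=m_{p-1}+1$ so that block $p-1$ reduces to the single index $q+1$. One must check that $e_{L+2}$ is defined wherever it is used --- which holds because $n\le q$ gives $L\le p-2$ --- and that every $k$ occurring in the ``later block'' case really lies in a block with a valid closed formula, both of which follow from the identity $m_{\ell+1}-m_\ell=q_{\ell+1}$ for $\ell\le p-2$ together with $m_p-m_{p-1}=1$.
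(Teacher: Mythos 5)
Your argument is correct for $n<k\le q+1$, and it takes a genuinely different route from the paper's. The paper proves Proposition~\ref{stronzo} by induction along the recursive construction of the binomials $F_{n,b_n}$ from Remark~\ref{seqq}: identities \eqref{ee2}--\eqref{ee3} say that each new binomial has $\X$-degree equal to the \emph{sum} of the $\X$-degrees of $F_{n,b_n}$ and $F_{m_{\ell(n)},b_{m_{\ell(n)}}}$, and the strict inequality is propagated along the two parity classes of $\ell(\cdot)$, with the ``drop'' at $k=m_{\ell(n)}$ handled separately. Your approach instead goes through the closed-form description in Proposition~\ref{connection}-ii together with the sign alternation in Lemma~\ref{corr}-ii, obtaining $D_j=|\sigma_j|+|\tau_j|=e_\ell+(j-m_\ell)e_{\ell+1}$ on each block, with $e_\ell=|s_\ell|+|t_\ell|$ satisfying $e_{\ell+1}=e_{\ell-1}+q_\ell e_\ell$. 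This turns the statement into a transparent arithmetic fact: within a block $D$ grows with positive slope, and the only index $k>n$ at which $D_k$ can drop back below $D_n$ is the head $m_{\ell(n)}$ of the next block, which is precisely what the hypothesis excludes. As you note, this also yields the sharpening of Proposition~\ref{connection}-iv for free, and the bound $D_n\le e_{L+2}-e_{L+1}$ versus $D_k\ge e_{L+2}$ is a nice way to package the comparison.

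There is one case your proof does not reach, and it matters because of how the proposition is used: in the proof of Proposition~\ref{keyy} the inequality is applied with $k$ running up to $q+2$, where $|\sigma_{q+2}-\tau_{q+2}|$ must be read as $\deg_\X F_{q+2,0}=d$. The closed formula from Proposition~\ref{connection}-ii only covers $1\le j\le q+1$, so your trichotomy (``$k$ in block $L$, in block $L+1$, or in some block $L'\ge L+2$'') never hits $k=q+2$. The repair is one line. For $n\le q$ you already have $L\le p-2$ and $D_n<e_{L+2}$; since $L+2\le p$ and $(e_\ell)_{\ell\ge1}$ is strictly increasing, $e_{L+2}\le e_p$, and $e_p=|s_p|+|t_p|=(d-u)+u=d$ because $a_p=0$ forces $|s_p|u=|t_p|(d-u)$ while Lemma~\ref{corr}-iv with $a_{p-1}=1$ gives $|s_p|\le d-u$, $|t_p|\le u$ (and $t_p\neq0$). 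Hence $D_n<d=|\sigma_{q+2}-\tau_{q+2}|$. For $n=q+1$ the only $k>n$ is $q+2=m_{\ell(q+1)}=m_p$, which is excluded, so nothing further is needed. With this addition the proof is complete.
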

\begin{proof}
Note that $|\sigma_n-\tau_n|$ is degree on the $\X$-variables of $F_{n,b_n}(\T,\X).$ By an easy induction, using \eqref{ee1},\,\eqref{ee2}, and \eqref{ee3}, one can show that always we have
$$|\sigma_n-\tau_n|>0,\ \ \forall n=1,\ldots, q+2.
$$ 
Let us fix $n$, and denote with $k$ the first integer satisfying $k>n$ and $\ell(k)\equiv\ell(n)\,\mod\,2.$ Then, it is easy to see that
$k\in\{n+1,m_{\ell(n)+1}\}.$ In both cases, due to \eqref{ee2} and \eqref{ee3}, we have that
$$\begin{array}{lclcl}
|\sigma_k-\tau_k|&=&\deg_\X(F_{k,b_k}(\T,\X))&=&\deg_\X\big(F_{n,b_n}(\X,\X)\big)+\deg_\X\big(F_{m_{\ell(n)},b_{m_{\ell(n)}}}(\T,\X)\big)
\\&>&\deg_\X\big(F_{n,b_n}(\X,\X)\big)&=&|\sigma_n- \tau_n|.
\end{array}
$$
This proves the claim for this value of $k$, and by induction one can straightforwardly show that it holds for any $k_0>n$ such that $\ell(k_0)\equiv\ell(n)\,\mbox{mod}\,2.$
\par Let $k$ be now the first integer satisfying $n<k,$ with $k\neq m_{\ell(n)}$ having the property that $\ell(n)$ and $\ell(k)$ having different parity. Computing explicitly, we get that
$k\in\{m_{\ell(n)}+1,\,m_{\ell(n)+2}\}.$
In the first case, thanks to \eqref{ee2} and \eqref{ee3}, we get that
\begin{equation}\label{sultan}
\begin{array}{lcl}
|\sigma_k-\tau_k|&=&\deg_\X(F_{k,b_k}(\T,\X))\\
&=&\deg_\X\big(F_{m_{\ell(n)},b_{m_{\ell(n)}}}(\T,\X)\big)+\deg_\X\big(F_{m_{\ell(n)+1},b_{m_{\ell(n)+1}}}(\T,\X)\big)
\\&>&\deg_X\big(F_{m_{\ell(n)+1},b_{m_{\ell(n)+1}}}(\T,\X)\big)\\
&=&|\sigma_{m_{\ell(n)+1}}-\tau_{m_{\ell(n)+1}}|>|\sigma_n- \tau_n|,
\end{array}
\end{equation}
the last inequality due to the fact that $\ell\big(m_{\ell(n)+1}\big)=\ell(n)+2\equiv\ell(n),$ combined with the case shown in the first part of this proof.
\par If $k= m_{\ell(n)+2},$ then we have that, using \eqref{ee2} and \eqref{ee3}, that there exists $j<m_{\ell(n)+2}$ such that
$$\begin{array}{lcl}
|\sigma_k-\tau_k|&=&\deg_\X(F_{k,b_k}(\T,\X))\\
&=&\deg_\X\big(F_{j,b_j}(\T,\X)\big)+\deg_X\big(F_{m_{\ell(n)+1},b_{m_{\ell(n)+1}}}(\T,\X)\big)
\\&>&\deg_X\big(F_{m_{\ell(n)+1},b_{m_{\ell(n)+1}}}(\T,\X)\big)\\
&=&|\sigma_{m_{\ell(n)+1}}-\tau_{m_{\ell(n)+1}}|>|\sigma_n- \tau_n|,
\end{array}
$$
where the last inequality holds for the same reasons as in \eqref{sultan}. This completes the proof for the first value of $k>n$ such that 
$k\neq m_{\ell(n)},$ having $\ell(k)$ and $\ell(n)$ different parities. For larger values of $k_0$ satisfying that $\ell(k_0)$ and $\ell(n)$ are not equal modulo $2$, by using again the first part of the proof (as now we have $\ell(k)\equiv\ell(k_0)\,\mbox{mod}\,2$), we get 
$$|\sigma_{k_0}-\tau_{k_0}|>|\sigma_k-\tau_k|,
$$
and from here the claim follows straightforwardly.
\end{proof}
\smallskip
\begin{proposition}\label{gb}
Consider the monomial order $\prec_l.$ Let $\mathcal{S}$ be the set made by $\mbox{\rm lm}\big(F_{q+2,0}(\T,\X)\big),$ and also by those monomials of the form
\begin{itemize}
\item  $T_0^aX_0^{\gamma_0}X_2^{\delta_0}$ with $(\gamma_0,\beta_0)$ being the minimal solution of  $a=-u\gamma+(d-u)\delta$ for $1\leq a\leq d-u,$
\par or
\item  $T_0^aX_1^{\gamma_0+\delta_0}$ with $(\gamma_0, \delta_0)$ being the minimal solution of $ a=u\gamma-(d-u)\delta,$ with $1\leq a\leq u.$ 
\end{itemize}
Then $\mathcal{S}$ generates   $\mbox{\rm lt}\big(\mbox{\rm ker}(\Phi_0)\big).$ 
\end{proposition}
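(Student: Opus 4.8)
The plan is to prove separately the two inclusions $\langle\mathcal{S}\rangle\subseteq\mbox{lt}(\mbox{ker}(\Phi_0))$ and $\mbox{lt}(\mbox{ker}(\Phi_0))\subseteq\langle\mathcal{S}\rangle$. For the first one I would exhibit, for each monomial of $\mathcal{S}$, a binomial of $\mbox{ker}(\Phi_0)$ of which it is the $\prec_l$-leading monomial: the monomial $T_0^aX_0^{\gamma_0}X_2^{\delta_0}$ is the leading term of $T_0^aX_0^{\gamma_0}X_2^{\delta_0}-T_1^aX_1^{\gamma_0+\delta_0}$, and $T_0^aX_1^{\gamma_0+\delta_0}$ is the leading term of $T_0^aX_1^{\gamma_0+\delta_0}-T_1^aX_0^{\gamma_0}X_2^{\delta_0}$, both of which belong to $\mbox{ker}(\Phi_0)$ by Lemma \ref{easy} once one notes that the exponent vectors prescribed in the definition of $\mathcal{S}$ really are nonnegative solutions of the pertinent diophantine equations. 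Here one uses only that $T_0$ is the $\prec_l$-largest variable, so that the term carrying $T_0^a$ wins whenever $a>0$. Finally $\mbox{lm}(F_{q+2,0}(\T,\X))\in\mbox{lt}(\mbox{ker}(\Phi_0))$ because $F_{q+2,0}(\T,\X)\in\mbox{ker}(\Phi_0)$.

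For the converse I would use that $\mbox{ker}(\Phi_0)$ is prime (being the kernel of a surjection onto the domain $\mbox{Rees}(\I)$), that by Proposition \ref{est} a reduced Gr\"obner basis of $\mbox{ker}(\Phi_0)$ for $\prec_l$ consists of binomials, and that the leading terms of a Gr\"obner basis generate the initial ideal. Dividing each such binomial by the gcd of its two terms, which keeps it in $\mbox{ker}(\Phi_0)$ by primeness and can only shrink its leading monomial, it suffices to show $\mbox{lm}(b)\in\langle\mathcal{S}\rangle$ for every $b=T_0^a\X^\ugamma-T_1^a\X^\udelta\in\mbox{ker}(\Phi_0)$ with $\gcd(\X^\ugamma,\X^\udelta)=1$. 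By Lemma \ref{easy}, such a $b$ lies in one of four families, and when $a>0$ its leading monomial is the term carrying $T_0^a$. I would then distinguish cases. If $b$ is in the first family with $1\leq a\leq d-u$, then its $\X$-part $X_0^{e_0}X_2^{e_2}$ is a nonnegative solution $(e_0,e_2)$ of $a=-u\gamma+(d-u)\delta$, which by the description of the general solution in Remark \ref{dioff} is coordinatewise at least the minimal solution; hence the member of $\mathcal{S}$ attached to the minimal solution divides $\mbox{lm}(b)=T_0^aX_0^{e_0}X_2^{e_2}$. The second family with $1\leq a\leq u$ is handled the same way with $a=u\gamma-(d-u)\delta$. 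In every remaining case with $a>0$, the defining identity of the corresponding family forces either the $X_2$-exponent of $\mbox{lm}(b)$ to be positive with $a\geq d-u$, in which case $T_0^{d-u}X_2\in\mathcal{S}$ (the member with $a=d-u$, minimal solution $(0,1)$) divides $\mbox{lm}(b)$, or the $X_1$-exponent of $\mbox{lm}(b)$ to be positive with $a\geq u$, in which case $T_0^uX_1\in\mathcal{S}$ (the member with $a=u$, minimal solution $(1,0)$) divides $\mbox{lm}(b)$; this covers the first family with $a>d-u$, the second with $a>u$, the whole third family, and both branches of the fourth. Finally, if $a=0$, then $\gcd(u,d-u)=1$ forces $b=\pm\big((X_0^{d-u}X_2^u)^k-(X_1^d)^k\big)$ for some $k\geq1$, so $\mbox{lm}(b)$ is a multiple of $\mbox{lm}(F_{q+2,0}(\T,\X))$, which is $X_0^{d-u}X_2^u$ when $\sigma_q>0$ and $X_1^d$ when $\sigma_q\leq0$.

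The genuine inputs are Proposition \ref{est} (which makes the initial ideal monomially generated by leading terms of binomials), the classification in Lemma \ref{easy}, and the observation in Remark \ref{dioff} that every nonnegative solution of $a=b\gamma-c\delta$ coordinatewise dominates the minimal one; everything else is bookkeeping and I do not expect a real obstacle. The order $\prec_l$ is arranged so that $T_0$ dominates every $X_i$, which removes all ambiguity about the leading term of a binomial with $a>0$; the two flavours of $\prec_l$, according to the sign of $\sigma_q$, differ only in which monomial of $F_{q+2,0}(\T,\X)$ is leading, so they intervene only in the $a=0$ case, the argument being otherwise uniform in $(d,u)$. The one point demanding a little care is checking, family by family, the inequalities $a\geq d-u$ or $a\geq u$ together with the positivity of the relevant $X_i$-exponent of $\mbox{lm}(b)$, both of which follow at once from the nonnegativity of the exponents occurring in $b$ and the defining identities of the four families in Lemma \ref{easy}.
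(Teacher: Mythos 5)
Your proposal is correct and follows essentially the same route as the paper: Proposition \ref{est} to reduce to binomials, Lemma \ref{easy} to classify them into four families, elimination of the last two families by divisibility by $T_0^{d-u}X_2$ and $T_0^uX_1$, and Remark \ref{dioff} (equivalently \eqref{soliton}) to descend from an arbitrary nonnegative solution to the minimal one. The paper packages this via an intermediate set $\cS^*$ and phrases the irreducibility of reduced Gr\"obner basis elements slightly differently (rather than your explicit division by the monomial $\gcd$), but these are cosmetic differences; the mathematical content and the key inputs are identical.
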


\begin{proof}
Let $\cS^*$ be the set of monomials containig  $\mbox{\rm lm}\big(F_{q+2,0}(\T,\X)\big),$ and also $T_0^aX_0^{\gamma}X_2^{\delta}$ with $(\gamma,\delta)\in\N^2$ satisfying $a=-u\gamma+(d-u)\delta,$ and also by those monomials
of the form $T_0^aX_1^{\gamma+\delta}$ with $(\gamma,\delta)\in\N^2$ such that $ a=u\gamma-(d-u)\delta.$ We claim that $\cS^*$ generates $\mbox{lt}\big(\mbox{ker}(\Phi_0)\big).$
\par
Indeed, by Proposition \ref{est}, a reduced Gr\"obner basis of $\mbox{ker}(\Phi_0)$ consists of binomials. As this ideal is prime and bihomogeneous, the elements in its reduced Gr\"obner basis must be bihomogeneous and irreducible, so they belong to the list given in Lemma \ref{easy}.
Noting that $T_1\prec_l T_0,$ the fact that $\cS^*$ generates  $\mbox{lt}(\mbox{ker}(\Phi_0))$ will follow straightforwardly if we show that we can generate the initial ideal only with leading terms of binomials coming from the first two rows of \eqref{binomial}. To do this, we observe first that
the two binomials listed in \eqref{sgen} appear in the first two rows of \eqref{binomial}, which implies that $T_0^{d-u}X_2$ and $T_0^{u}X_1$ are elements of $\cS^*$. The fact that all the leading terms of binomials appearing in the last two rows of \eqref{binomial} can be ignored follows directly from these observations, due to the fact that
\begin{itemize}
\item $T_0^aX_2^{\gamma+\delta}$ with $a=d\gamma+(d-u)\delta$ is always multiple of $T_0^{d-u}X_2,$
\item $T_0^aX_1^\gamma X_2^\delta$ with $a=u \gamma+d\delta$ is either a multiple of $T_0^u X_1$ (if $\gamma>0$), or of   $T_0^{d-u} X_2$ (if $\gamma=0$).
\end{itemize}
These observations also imply that we can reduce the values of $a$ to the set $\{0,1,\ldots, d-u\}$ (resp. $\{1, 2,\ldots, u\})$) in the first (resp. second) row of \eqref{binomial} to generate all the monomials in $\cS^*.$
From here, it is very easy to show that every monomial in $\cS^*$ is a multiple of one in $\cS$, thanks to \eqref{soliton}. This concludes the proof of the claim.
\end{proof}
\smallskip
Recall now the family $\cF_0=\{F_{n, b_n}(\T,\X)\}_{n=1,\ldots, q+2}$  introduced in Section \ref{setup}, with  $F_{n,b_n}(\T,\X)$ defined in \eqref{fgen}, and set
\begin{equation}\label{sf0}
\cS_{\cF_0}=\{\mbox{lt}\big(F_{n,b_n}(\T,\X)\big),\,n=1,\ldots, q+2\}.
\end{equation}

\begin{lemma}\label{seethe}
$\cS_{\cF_0}\subset \cS.$
\end{lemma}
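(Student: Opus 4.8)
The plan is to show that every element of $\cS_{\cF_0}$, i.e.\ every leading term $\mbox{lt}\big(F_{n,b_n}(\T,\X)\big)$, lies in the set $\cS$ described in Proposition \ref{gb}. I would split the argument according to the three cases in which the polynomials $F_{n,b_n}(\T,\X)$ fall: the generic members $F_{n,b_n}$ with $1\le n\le q+1$ defined by \eqref{fgen} (further subdivided by the sign of $\sigma_n$), and the special member $F_{q+2,0}(\T,\X)$, whose leading monomial is by construction one of the distinguished elements of $\cS$. The special case is immediate; the real content is the generic one.

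For the generic case, first I would use the hypothesis $u<d/2$ together with Lemma \ref{easy} and Proposition \ref{connection}-iii), which guarantees $\sigma_n u+\tau_n(d-u)=b_n$, to identify which row of \eqref{binomial} each $F_{n,b_n}$ belongs to: when $\sigma_n\le 0$ the polynomial is $T_0^{b_n}X_0^{-\sigma_n}X_2^{\tau_n}-T_1^{b_n}X_1^{\tau_n-\sigma_n}$, matching the first row with $\gamma_0=-\sigma_n$, $\gamma_2=\tau_n$; when $\sigma_n>0$ it is $T_0^{b_n}X_1^{\sigma_n-\tau_n}-T_1^{b_n}X_0^{\sigma_n}X_2^{-\tau_n}$, matching the second row. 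Since $T_1\prec_l T_0$, the leading monomial in each case is the $T_0$-term, namely $T_0^{b_n}X_0^{-\sigma_n}X_2^{\tau_n}$ or $T_0^{b_n}X_1^{\sigma_n-\tau_n}$ respectively. Next I would check the two numerical conditions defining $\cS$: that $1\le b_n\le d-u$ (resp.\ $1\le b_n\le u$), which follows from the SERS being a decreasing sequence with $b_1=d-u$, $c_1=u$, and from Proposition \ref{connection}-i) locating $b_n$ between consecutive $a_\ell$'s; and that the exponent pair is the \emph{minimal} solution of the relevant diophantine equation. For minimality I would invoke Remark \ref{dioff}: it suffices to show that either $-\sigma_n<d-u$ or $\tau_n<u$ (resp.\ $\sigma_n<u$ or $-\tau_n<d-u$), and these bounds are exactly Proposition \ref{connection}-vii), $|\sigma_j|<d-u$ and $|\tau_j|<u$ for all $j\le q+1$.

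The step I expect to be the main obstacle is the bookkeeping of signs and of which of the two diophantine equations (and hence which minimality criterion in Remark \ref{dioff}) applies in each subcase, making sure the inequalities coming from Proposition \ref{connection}-vii) are the ones actually needed — in particular confirming that when $\sigma_n\le 0$ one genuinely has a solution of $a=-u\gamma+(d-u)\delta$ with $1\le a\le d-u$ and that $\tau_n$ (or $-\sigma_n$) is small enough to trigger the "minimal solution" clause of Remark \ref{dioff}. Once the correspondence $F_{n,b_n}\leftrightarrow$ (row of \eqref{binomial}, minimal diophantine solution) is pinned down, the containment $\mbox{lt}(F_{n,b_n})\in\cS$ is a direct reading-off, and the lemma follows by ranging $n$ over $1,\ldots,q+2$.
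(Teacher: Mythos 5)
Your proposal is correct and follows essentially the same route as the paper: reduce the claim to a minimality statement for the diophantine equations coming from the first two rows of \eqref{binomial}, and then establish minimality via the bounds on $|\sigma_n|$ and $|\tau_n|$ from Proposition~\ref{connection}-vii) together with the uniqueness/minimality criterion in Remark~\ref{dioff}. The paper's own (very terse) proof cites Proposition~\ref{connection}-v)-vi)-vii) and Proposition~\ref{stronzo}, but the operative content is the same bound $|\sigma_n|<d-u$, $|\tau_n|<u$ plugged into Remark~\ref{dioff}, which is exactly what you do.

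One bookkeeping slip worth flagging: in the case $\sigma_n>0$ the relevant equation is $a=u\gamma-(d-u)\delta$, so in the notation of Remark~\ref{dioff} one has $b=u$, $c=d-u$, and the minimality criterion reads $\sigma_n<d-u$ or $-\tau_n<u$, not ``$\sigma_n<u$ or $-\tau_n<d-u$'' as you wrote. Since Proposition~\ref{connection}-vii) supplies both $|\sigma_n|<d-u$ and $|\tau_n|<u$ simultaneously, the conclusion is unaffected, but the criterion you state is transposed. A second small point you gesture at but should make explicit is why $b_n\le u$ whenever $\sigma_n>0$ (so that the $a$ falls in the range required by Proposition~\ref{gb}): this is because $\sigma_n>0$ forces $n\ge m_1$, and $\{b_n\}$ is decreasing with $b_{m_1}=a_1=u$ by Proposition~\ref{connection}-i).
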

\begin{proof}
By   Proposition \ref{connection} v)-vi)-vii), and Proposition \ref{stronzo}, we deduce from the identity $\sigma_nu+\tau_n(d-u)=b_n$ that $(-\sigma_n,\tau_n)$ is the minimal solution of the diophantine equation $b_n=-u\gamma+(d-u)\delta$ if $\sigma_n\leq0,$ and $(\sigma_n,-\tau_n)$ is the minimal solution of the equation $b_n=u\gamma-(d-u)\delta$ if $\sigma_n>0.$  This fact, combined with the definition of $F_{n,b_n}(\T,\X)$ given in \eqref{fgen}, shows that $\cS_{\cF_0}\subset \cS,$ which concludes the proof.
\end{proof}
\smallskip
The following claim will help us to show that $\cF_0$ is a Gr\"obner basis of $\mbox{ker}(\Phi_0).$
\begin{proposition}\label{xxovni}
For $a=1,\ldots, d-u,$ let $(\gamma_a,\delta_a)$ be the minimal solution of $a=-u\gamma+(d-u)\delta,$  and $n\in\{1,\ldots, q+1\}$ the minimum such that $b_n\leq a$ and $\sigma_n\leq0.$ Then, $-\sigma_n\leq\gamma_a$ and $\tau_n\leq\delta_a.$
Analogously, for $a=1,\ldots,u,$ if  $(\gamma_a, \delta_a)$ is the minimal solution of $ a=u\gamma-(d-u)\delta,$ and  $n\in\{1,\ldots, q+1\}$ is the minimum such that $b_n\leq a$ and $\sigma_n>0,$ then, $\sigma_n\leq\gamma_a$ and $-\tau_n\leq\delta_a.$
\end{proposition}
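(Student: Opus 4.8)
The plan is to reduce the statement to a coordinatewise domination between minimal solutions of the relevant Diophantine equations, and then to establish that domination by induction along the Euclidean structure of $(d,u)$.

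First I would record the reduction. By Proposition \ref{connection}(v),(vii) together with Remark \ref{dioff} --- exactly as in the proof of Lemma \ref{seethe} --- the pair $(-\sigma_n,\tau_n)$ is itself the unique minimal solution of $b_n=-u\gamma+(d-u)\delta$ (and, symmetrically, $(\sigma_n,-\tau_n)$ is the minimal solution of $b_n=u\gamma-(d-u)\delta$ when $\sigma_n>0$). Moreover $\delta_a\ge1$ is immediate, since otherwise $a=-u\gamma_a\le0$. Hence the content of the Proposition is that the minimal solution of the equation attached to $a$ dominates, coordinatewise, the minimal solution of the same equation attached to $b_n$, where $b_n$ is the largest SERS value $\le a$ carrying the prescribed sign of $\sigma_n$. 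The two assertions of the Proposition are mirror images of one another under $\gamma\leftrightarrow\delta$, $u\leftrightarrow d-u$, so it is enough to run the argument for both simultaneously.

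Next, the key case, which also serves as the base of an induction on the length $p=p(d,u)$ of the Euclidean remainder sequence. If $a\ge a_1+a_2$ then, by Proposition \ref{connection}(i),(ii), the index $n$ lies in the first even block, $(b_n,\sigma_n,\tau_n)=\bigl((d-u)-(n-1)u,\,-(n-1),\,1\bigr)$, and $n-1=\lceil((d-u)-a)/u\rceil$. From $a=(d-u)\delta_a-u\gamma_a$ with $\delta_a\ge1$ one reads off: if $\delta_a=1$ then $\gamma_a=((d-u)-a)/u$, which is forced to be an integer, so $\gamma_a=n-1$; if $\delta_a\ge2$ then $u\gamma_a=(d-u)\delta_a-a\ge2(d-u)-a>(d-u)-a$, so $\gamma_a>((d-u)-a)/u$ and again $\gamma_a\ge n-1$. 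In either case $\gamma_a\ge-\sigma_n$ and $\delta_a\ge1=\tau_n$, and the $X_1$-variant is the mirror computation. When $p$ is minimal (equivalently $u=1$) every $a\in\{1,\dots,d-u\}$ satisfies $a\ge a_1+a_2$, so this also disposes of the base case.

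For $p\ge3$ and $a<a_1+a_2$ the index $n$ lies in a later even block, and the plan is to descend one Euclidean macro-step. The segment of the SERS of $(d,u)$ from index $m_1$ onward coincides, after re-indexing, with the SERS of the pair $(d'',u''):=(a_1+a_2,\,a_2)$, whose Euclidean sequence $a_1,a_2,\dots,a_p$ is one step shorter; using the identity $a_0=q_1a_1+a_2$ one translates both the value $a$ and the Bézout coefficients $(\sigma_n,\tau_n)$ to the corresponding data for $(d'',u'')$, applies the inductive hypothesis there, and transports the resulting inequalities back. The hard part will be precisely this translation: the coefficients $\sigma_n,\tau_n$ are canonical Bézout coefficients relative to the basis $\{u,d-u\}=\{a_1,a_0\}$, whereas for $(d'',u'')$ one needs them canonical relative to $\{a_2,a_1\}$; the change of basis is a shear of the form $(\sigma,\tau)\mapsto(\tau,\sigma+q_1\tau)$, and one must check --- this is where Lemma \ref{corr}(ii)--(iv) and Proposition \ref{connection}(iv)--(vii) are used --- that it carries canonical solutions to canonical solutions, that it interchanges the two types of binomials (an even block of $(d,u)$ mapping to an odd block of $(d'',u'')$, consistently with the sign reversal of $\sigma$), and that it respects the alternating sign pattern, so that the inductive hypothesis is genuinely available for the translated data. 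The block-head indices $n=m_\ell$ need a separate but easy check, which is just the first-block estimate of the previous paragraph carried out one level down. Apart from this bookkeeping the argument is routine.
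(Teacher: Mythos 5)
Your proposal takes a genuinely different route from the paper: you induct on the length $p$ of the Euclidean remainder sequence, descending from $(d,u)$ to $(d'',u'')=(a_1+a_2,a_2)$, whereas the paper inducts on $a$ itself, staying inside the single pair $(d,u)$. Your base case (all $a\ge a_1+a_2$, i.e.\ the first $\ell=0$ block) is correctly worked out and is in fact cleaner and more explicit than what the paper records. However, the inductive step is not a proof but a plan: you yourself write that ``the hard part will be precisely this translation'' and then list several things that ``one must check''. Those checks are exactly the mathematical content and they are nontrivial. Concretely, you would need to show that the shear $(\gamma,\delta)\mapsto(\delta,\gamma-q_1\delta)$ (a) carries a nonnegative solution of $a=-u\gamma+(d-u)\delta$ to a nonnegative solution of one of the two Diophantine types attached to $(d'',u'')$ --- this already fails to be automatic, since $\gamma-q_1\delta$ need not be nonnegative, and its sign determines which of the two types one lands in; (b) carries the \emph{minimal} solution to the \emph{minimal} solution; and (c) matches the SERS Bézout data $(\sigma_n,\tau_n)$ of $(d,u)$, index-shifted by $m_1-1$, with the SERS Bézout data of $(d'',u'')$, with the claimed sign reversal. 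None of these are verified, and (a) in particular shows the translation is more delicate than a single formula.

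By contrast, the paper's descent avoids a change of basis entirely: for $a>1$ it passes to $a-b_{m_\ell}$ (with $\ell=\ell(n)$), observes that $k$ with $b_k=b_n-b_{m_\ell}$ is again the minimal admissible index for $a-b_{m_\ell}$, and that $(\gamma_a+\sigma_{m_\ell},\,\delta_a-\tau_{m_\ell})$ is a nonnegative solution of the same Diophantine equation at level $a-b_{m_\ell}$ (nonnegativity holds because $\sigma_{m_\ell}>0$ and $\tau_{m_\ell}\le 0$ when $\ell$ is odd, which is forced by $\sigma_n\le 0$). The coordinatewise comparison with the minimal solution then follows from the one-parameter description of solutions in Remark~\ref{dioff}. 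This keeps the whole argument inside one Diophantine problem and sidesteps the bookkeeping your plan defers. If you want to salvage your approach, the most efficient fix is probably to prove the minimality-preservation of the shear as a standalone lemma, using Remark~\ref{dioff} and the bound $\delta_a<u$ or $\gamma_a<d-u$ to control the sign of $\gamma_a-q_1\delta_a$; as written, the step is a gap.
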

\begin{proof}
Consider first the case  $(\gamma_a,\delta_a)$ being the minimal solution of 
$$a=-u\gamma+(d-u)\delta, \ \mbox{with}\, a\in\{1,\ldots, d-u\}.$$ The proof will be done by induction on $a.$ The case $a=1$ follows straightforwardly, due to the fact that in this case we will have $n\in\{q,q+1\},$ and $b_n=a=1,$ so the equality actually holds as we already know that
$(-\sigma_n,\tau_n)$ is the minimal solution of the diophantine equation $b_n=-u\gamma+(d-u)\delta.$
\par
Suppose now $a>1,$ and let $\ell\in\N$ be such that $m_{\ell-1}\leq n<m_\ell.$ Note that this implies
$0<b_{m_\ell}<b_n\leq a,$ and hence $a-b_{m_\ell}>0.$ Let $k$ be such that $b_k=b_n-b_{m_\ell}.$ By the definition of the SERS, note that we actually have $k\in\{n+1,\,m_{\ell+1}\}.$
We claim that $k$ is actually the minimum index such that $b_k\leq a-b_{m_\ell}$ and $\sigma_k\leq0.$ Indeed, the first inequality holds straightforwardly, and the second one is a direct consequence of  Proposition \ref{connection}-iv)   (recall that $\sigma_n\leq0$ by hypothesis). So, we only have to check that $k$ is actually the minimum.
\par Suppose first $k=n+1,$ and it is not the minimum. As the sequence $\{b_j\}_j$ is decreasing,  we would then have
$b_n\leq a-b_{m_\ell},
$ which would imply
\begin{equation}\label{aab}b_n+b_{m_\ell}\leq a.\end{equation} Note that $b_n+b_{m_\ell}=b_{n-1}$ if $n> m_{\ell-1},$ or $b_n+b_{m_\ell}=b_{j_0}$ with $j_0<m_{\ell-1}=n$ otherwise. In both cases, we would also have $\sigma_{n-1}\leq0$ or $\sigma_{j_0}\leq0,$ so  \eqref{aab} contradicts the choice of $n$ as the minimum index such that $b_n\leq a$ with $\sigma_n\leq0.$
\par
The case  $k=m_{\ell+1}$ can be treated analogously, by noting that in this case, we have
$$b_{m_{\ell+1}}=b_n-b_{m_\ell}\leq a-b_{m_\ell},$$
which implies that $n=m_\ell-1.$ Moreover, thanks again to  Proposition \ref{connection}-iv), for all $j$ such that $m_\ell\leq j<m_{\ell+1}$ we will have $\sigma_j>0.$ This implies that if $k=m_{\ell+1}$ is not the minimum satisfying the conditions of the hypothesis with $a-b_{m_\ell},$ then the next ``available'' index in the sequence will be $n,$ i.e.
we will actually have $b_n\leq a-b_{m_\ell}.$ This is the case we have just discarded above.
\par We continue with the proof of the claim:  by the definition of the SERS given in \eqref{abcd}, we actually have
$$\begin{array}{ccc}
\sigma_k&=&\sigma_n-\sigma_{m_\ell}\\
\tau_k&=&\tau_n-\tau_{m_\ell},
\end{array}
$$
so we get
$$b_k=-u(-\sigma_n+\sigma_{m_\ell})+(d-u)(\tau_n-\tau_{m_\ell}).
$$
Let  $(\gamma_{a*},\delta_{a^*})$ be the minimal solution of $a-b_{m_\ell}=-u\gamma+(d-u)\delta.$ By the inductive hypothesis we have that, for the $k$ minimum described above, $-\sigma_k\leq\gamma_{a^*}$ and $\tau_k\leq\delta_{a^*}.$
In addition, we have
$$a-b_{m_\ell}=-u\big(\gamma_a+\sigma_{m_\ell}\big)+(d-u)\big(\delta_a-\tau_{m_\ell}\big).$$
Note that $\big(\gamma_a+\sigma_{m_\ell},\delta_a-\tau_{m_\ell}\big)$ is actually a positive solution of the same diophantine equation, so we deduce
$$\begin{array}{rclcccl}
-\sigma_k&=&-\sigma_n+\sigma_{m_\ell}&\leq&\gamma_{a^*}&\leq&\gamma_a+\sigma_{m_\ell}\\
\tau_k&=&\tau_n-\tau_{m_\ell}&\leq&\delta_{a^*}&\leq&\delta_a-\tau_{m_\ell},
\end{array}
$$
which implies $-\sigma_n\leq\gamma_a,$ and $\tau_n\leq\delta_a$ as claimed.
\par The case $ a=u\gamma-(d-u)\delta$ with  $a\in\{1,\ldots, u\}$, follows mutatis mutandis the proof above. We leave the details as an exercise for the reader.
\end{proof}
\smallskip
Recall that
$\mbox{bideg}\big(F_{n,b_n}(\T,\X)\big)=\left(\deg_\T\big(F_{n,b_n}(\T,\X)\big),\,\deg_\X\big(F_{n,b_n}(\T,\X)\big)\right),$ and set $(a,b)\leq (c,d)$ if and only if $a\leq c$ and $b\leq d$. Also, recall that we denote with $\ell(n)$ the unique $\ell$ such that $m_{\ell-1} \leq n < m_\ell.$

\begin{proposition}\label{keyy}
For $j=0,\ldots, q+2,$
$$
\mbox{\rm bideg}\big(F_{k,b_k}(\T,\X)\big)\leq\mbox{\rm bideg}\big(F_{j,b_j}(\T,\X)\big)\iff k\in\{m_{\ell(j)},\,j\}.
$$
\end{proposition}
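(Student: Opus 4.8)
The plan is to split the bidegree into its two coordinates and to control each one by a monotonicity statement that is already available.

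First I would record, using \eqref{fgen} and the fact that $\sigma_n$ and $\tau_n$ have opposite signs (part vii) of Proposition \ref{connection}), that
$$\bdeg\big(F_{n,b_n}(\T,\X)\big)=\big(b_n,\,|\sigma_n-\tau_n|\big)\ \ (n\le q+1),\qquad \bdeg\big(F_{q+2,0}(\T,\X)\big)=(0,d).$$
Writing $d_n$ for this $\X$--degree, with the convention $d_{q+2}=d$ and $b_{q+2}=0$, the assertion becomes $(b_k,d_k)\le(b_j,d_j)\iff k\in\{m_{\ell(j)},j\}$. The ingredients I would use are: (i) $\{b_n\}_{n=1}^{q+2}$ is strictly decreasing except for the single coincidence $b_q=b_{q+1}$, which follows from part i) of Proposition \ref{connection} together with $a_{p-1}=\gcd(u,d)=1$; (ii) $b_{m_{\ell(j)}}=c_j\le b_j$, already noted after \eqref{rho}; (iii) $0<d_n<d$ for $n\le q+1$ (from $|\sigma_n|<d-u$ and $|\tau_n|<u$), while $d_q=d-d_{q+1}$ and $d_{q+1}\le d/2\le d_q$, as in part viii) of Proposition \ref{connection} and its proof; (iv) the sharpened monotonicity of Proposition \ref{stronzo}: if $n<k$ and $k\ne m_{\ell(n)}$ then $d_n<d_k$; and (v) the additive law for the $\X$--degrees built into the recursive construction of $\cF_0$ in Remark \ref{seqq}, namely $d_{\rho(n)}=d_n+d_{m_{\ell(n)}}$ and $b_n=b_{\rho(n)}+b_{m_{\ell(n)}}$, equivalently the ``block interpolation'' $d_j=d_{m_{\ell(j)-1}}+\big(j-m_{\ell(j)-1}\big)\,d_{m_{\ell(j)}}$, which one reads off from part ii) of Proposition \ref{connection} and Lemma \ref{corr}.

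For the direction ``$\Rightarrow$'' I would argue as follows. Suppose $(b_k,d_k)\le(b_j,d_j)$ with $k\ne j$. If $k>j$, then by (iv) the inequality $d_k\le d_j$ is possible only when $k=m_{\ell(j)}$, which is what we want. If $k<j$, then $b_k\le b_j$ and the strict decrease in (i) force $b_k=b_j$, hence $k=q$ and $j=q+1$; but then (iii) gives $d_k=d_q\ge d/2\ge d_{q+1}=d_j$, and together with $d_k\le d_j$ this would force $d_q=d_{q+1}=d/2$, which one excludes directly using (iii). Finally $k=q+2$ cannot occur when $j\le q+1$, since $d_{q+2}=d>d_j$, and when $j=q+2$ the inequality $b_k\le 0$ forces $k=q+2=j$. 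For ``$\Leftarrow$'', $k=j$ is trivial, and for $k=m_{\ell(j)}$ the $\T$--coordinate is exactly (ii), while the $\X$--coordinate $d_{m_{\ell(j)}}\le d_j$ should come from the interpolation in (v), at least when $j$ is not the first index of its block.

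The step I expect to be the main obstacle is precisely this last $\X$--comparison, together with its bookkeeping at the ``boundaries'' of the \emph{slow} Euclidean sequence. One must follow the pairs $(\sigma_n,\tau_n)$ not just at the block endpoints $m_\ell$ — where the classical Euclidean algorithm stops — but at every intermediate term, and then verify that the ordering on $\{d_n\}$ is compatible with the ordering on $\{b_n\}$, so that the only dominations are the asserted ones. The delicate cases to isolate and treat by hand are when $j$ itself is a block endpoint $m_\ell$, the tail coincidence $b_q=b_{q+1}$, and the exceptional generator $F_{q+2,0}$ of bidegree $(0,d)$; for these I would rely on the block identity in (v) and on the sharp estimate $d_{q+1}\le d/2\le d_q$ of part viii) of Proposition \ref{connection}.
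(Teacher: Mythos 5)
Your decomposition and your ``$\Rightarrow$'' argument mirror the paper's own proof: the monotone decrease of $\{b_n\}$ forces $j\le k$, and Proposition~\ref{stronzo} then rules out every $k>j$ other than $m_{\ell(j)}$. Your extra care at the coincidence $b_q=b_{q+1}$ is a refinement the paper's proof glosses over, but excluding $d_q=d_{q+1}=d/2$ is \emph{not} a direct consequence of your (iii): neither $0<d_n<d$ nor $d_{q+1}\le d/2\le d_q$ rules out equality. To close the gap, note that $d_q=d_{q+1}$ would force $|\sigma_q|=|\sigma_{q+1}|$ and $|\tau_q|=|\tau_{q+1}|$; since by \eqref{t1t} one has $\sigma_q-\sigma_{q+1}=\pm(d-u)$ and $\tau_q-\tau_{q+1}=\mp u$, with the two $\sigma$'s (resp.\ the two $\tau$'s) of opposite sign, this gives $2|\sigma_q|=d-u$ and $2|\tau_q|=u$, so $d-u$ and $u$ would both be even, contradicting $\gcd(u,d)=1$. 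Hence $d_q>d_{q+1}$ strictly.

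The obstacle you isolate in ``$\Leftarrow$'' is genuine, and in fact that direction of the Proposition is false as stated. With $(d,u)=(10,3)$ and $j=3$ one has $m_{\ell(3)}=6$, $\bdeg(F_{6,1})=(1,3)$ and $\bdeg(F_{3,3})=(3,1)$, so $\bdeg\big(F_{m_{\ell(j)}}\big)\not\le\bdeg(F_j)$ even though $6=m_{\ell(3)}$. The same failure occurs whenever $j=m_\ell$ is a block start with $\ell\ge1$: the block interpolation in your (v) collapses to $d_j=|s_\ell|+|t_\ell|$, whereas $d_{m_{\ell+1}}=|s_{\ell+1}|+|t_{\ell+1}|$ is larger by Lemma \ref{corr}-iii). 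The paper's own proof dispatches the converse with ``the statement follows straightforwardly'', which is not correct. Fortunately only the forward implication is ever used (in Theorem~\ref{mgenerators} it bounds the sum in \eqref{abovve} to at most one index $j\neq n$), so Proposition~\ref{keyy} should really be read as a one-sided implication; your argument for that direction is sound once the $d_q\neq d_{q+1}$ gap above is filled.
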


\begin{proof}
As the sequence $\{b_n\}_n,$ which keeps track of the $\T$-degrees of $\{F_{n,b_n}(\T,\X)\}$ is decreasing, we must have already $j\leq k.$
But if $k>j$ and $k\neq m_{\ell(j)},$ then thanks to Proposition \ref{stronzo}, we know that
$$\deg_\X\big(F_{k,b_k}(\T,\X)\big)=|\sigma_k-\tau_k|>|\sigma_j-\tau_j|=\deg_\X\big(F_{j,b_j}(\T,\X)\big),$$
so the claim cannot hold for this value of $k.$ Reciprocally, if $k\in\{m_{\ell(j)},\,j\},$ the statement follows straightforwardly.
\end{proof}
\smallskip
With all these auxiliary results, we can prove the first of the main theorems in the introduction.
\begin{proof}[Proof of Theorem \ref{mgenerators}]
Let $\cS$ be the set defined in the statement of Proposition \ref{gb}. By that claim, we know already that $\cS$ is a Gr\"obner basis of $\mbox{ker}(\Phi_0)$ with respect to $\prec_l.$ The fact that
$\mbox{lm}\big(\cS_{\cF_0}\big)=\mbox{lm}\big(\cS\big)$, with $\cS_{\cF_0}$ being defined in \eqref{sf0} follows straightforwardly from Lemma \ref{seethe} and Proposition \ref{xxovni}.
Indeed, Proposition \ref{xxovni} implies that for any $a$ such that $T_0^aX_0^{\gamma_0}X_2^{\delta_0}$ (resp. $T_0^aX_1^{\gamma_0+\delta_0})\,\in\cS$, there exists
$T_0^{b_n}X_0^{-\sigma_n}X_2^{\tau_n}$ (resp. $T_0^{b_n}X_1^{\sigma_n-\tau_n})\,\in\cS_{\cF_0}$ dividing this monomial.
\par This then implies that $\cS_{\cF_0}$ is a Gr\"obner basis of $\mbox{ker}(\Phi_0).$ To see that this basis is reduced, first note  that we have straightforwardly from the definition of these elements given in \eqref{fgen}, and Proposition \ref{keyy}, that the leading terms of each $F_{n,b_n}(\T,\X)$ are not divisible by $\mbox{lt}\big(F_{j,b_j}(\T,\X)\big),\,j\neq n$. This shows that the basis is {\em minimal}. The fact that it is reduced follows also immediatly by noting that all the elements of $\cF_0$ are monic and that the monomials which are not the leading term of any of these binomials cannot be divisible by any of the $\mbox{lt}(F_{j,b_j}(\T,\X)),\,j=1,\ldots q+2.$ This is because  the other monomial appearing in $F_{j,b_j}(\T,\X))$ which is not its leading term, is neither  a multiple of $T_0$ nor of $\mbox{lt}\big(F_{q+2,0}(\T,\X)\big),$ for $j=0,\ldots, q+2.$
\par It remains to show that $\cF_0$ is a minimal set of generators of $\mbox{ker}(\Phi_0).$  Suppose now that one of the $F_{n,b_n}(\T,\X)$ can actually be expressed as a polynomial combination of the other elements in $\cF_0.$ By bihomogeneity, we then must have an expression as follows:
\begin{equation}\label{abovve}
F_{n,b_n}(\T,\X)=\sum G_j(\T,\X)F_{j,b_j}(\T,\X),
\end{equation}
the sum being over those $j\neq n$ such that $\mbox{bideg}\big(F_{j,b_j}(\T,\X)\big)\leq\mbox{bideg}\big(F_{n,b_n}(\T,\X)\big).$ By Proposition \ref{keyy}, there is only possibility for such $j,$ which is $j=m_\ell$ (or none if $n=m_\ell$). But then, \eqref{abovve} would imply that
$F_{n,b_n}(\T,\X)$ is a multiple of $F_{m_\ell, b_{m_\ell}}(\T,\X)$, which contradicts the fact that it is an irreducible element. This concludes with the proof.
\end{proof}

\bigskip
\section{Minimal Generators of $\mbox{ker}(\Phi_1)$}\label{minimal1}
In this section we will work with the module of syzygies of $\cF_0$ regarded as a submodule of  $\S^{q+2}$ thanks to Theorem \ref{mgenerators}. We  associate the $n$-th element of the canonical basis $\e_n$ with the polynomial $F_{n,b_n}(\T,\X).$
As $\cF_0$ is a Gr\"obner basis of $\mbox{ker}(\Phi_0)$ with respect to $\prec_{l},$ Theorem \ref{3713} implies that the set
$\{\bs_{n,m}\}_{1\leq n<m\leq q+2}$  is  a Gr\"obner basis of
$\mbox{syz}\big(\cF_0)=\mbox{ker}(\Phi_1)$   with respect to $\prec_{l,\cF_0}$. We will work out the  syzygies presented in \eqref{2en1}, and \eqref{rrest}, and show that a subset of them are a minimal Gr\"obner basis of $\mbox{ker}(\Phi_1)$ and also a minimal set of generators of this module.
\par Recall the definitions of $\ell(n)$ and $\rho(n)$ given in the introduction, and in \eqref{rho} respectively.
\begin{lemma}\label{szg}
$$\begin{array}{ccl}
\bs_{n,\rho(n)}&=&\left\{\begin{array}{lcr}
X_0^{\sigma_{m_{\ell(n)}}}X_2^{-\tau_{m_{\ell(n)}}}\e_n-T_0^{b_{m_{\ell(n)}}}\e_{\rho(n)}-T_1^{b_{\rho(n)}}X_1^{-\sigma_n+\tau_n}\e_{m_{\ell(n)}}&\,\mbox{if}&\, \sigma_n\leq0,\\
X_1^{\tau_{m_{\ell(n)}}-\sigma_{m_{\ell(n)}}}\e_n-T_0^{b_{m_{\ell(n)}}}\e_{\rho(n)}-T_1^{b_{\rho(n)}}X_0^{\sigma_n}X_2^{-\tau_n}\e_{m_{\ell(n)}}
&\,\mbox{if}&\, \sigma_n>0,\\
\end{array}
\right.\\ \\
\bs_{n,m_{\ell(n)}}&=&\left\{\begin{array}{lcr}
X_1^{\sigma_{m_{\ell(n)}}-\tau_{m_{\ell(n)}}}\e_n-T_0^{b_{\rho(n)}}X_0^{-\sigma_n}X_2^{\tau_n}\e_{m_{\ell(n)}}-T_1^{b_{m_{\ell(n)}}}\e_{\rho(n)}
 &\,\mbox{if}&\, \sigma_n\leq0,\\
X_0^{-\sigma_{m_{\ell(n)}}}X_2^{\tau_{m_{\ell(n)}}}\e_n-T_0^{b_{\rho(n)}}X_1^{\sigma_n-\tau_n}\e_{m_{\ell(n)}}-T_1^{b_{m_{\ell(n)}}}\e_{\rho(n)}&\,\mbox{if}&\,\sigma_n>0.
\end{array}
\right.
\end{array}
$$
\end{lemma}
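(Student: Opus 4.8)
The plan is to compute, and then reduce, the two relevant \textbf{S}-polynomials directly from the explicit binomials in \eqref{fgen}, and then read off the syzygies via the formula \eqref{bs} from the construction behind Theorem \ref{3713}. The first step is to record the leading terms: since $\prec_l$ is lexicographic with $T_1\prec_l T_0$ and $T_0$ dominates every $X$-variable, $\mbox{lt}(F_{n,b_n}(\T,\X))$ is always the $T_0$-monomial, namely $T_0^{b_n}X_0^{-\sigma_n}X_2^{\tau_n}$ when $\sigma_n\le 0$ and $T_0^{b_n}X_1^{\sigma_n-\tau_n}$ when $\sigma_n>0$, and likewise at the indices $\rho(n)$ and $m_{\ell(n)}$.

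Next I would assemble the arithmetic bookkeeping. From $b_{m_{\ell(n)}}=c_n$ and $b_{\rho(n)}=b_n-c_n$ one gets the additivity $b_n=b_{\rho(n)}+b_{m_{\ell(n)}}$; from Proposition \ref{connection}-ii)--iii) together with the recursion \eqref{abcd} one reads off $\alpha_n=\sigma_{m_{\ell(n)}}$, $\beta_n=\tau_{m_{\ell(n)}}$, hence $\sigma_n=\sigma_{\rho(n)}+\sigma_{m_{\ell(n)}}$ and $\tau_n=\tau_{\rho(n)}+\tau_{m_{\ell(n)}}$; and from Proposition \ref{connection}-iv) the sign alternation across SERS blocks, which forces $\sigma_n$ and $\sigma_{m_{\ell(n)}}$ to have opposite signs and $\sigma_{\rho(n)}$ to have the same sign as $\sigma_n$ (and symmetrically for the $\tau$'s). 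This sign pattern is exactly what selects which branch of \eqref{fgen} is in force at each of the three indices: e.g. when $\sigma_n\le 0$, the element $F_{\rho(n),b_{\rho(n)}}(\T,\X)$ sits in the first branch and $F_{m_{\ell(n)},b_{m_{\ell(n)}}}(\T,\X)$ in the second.

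Then comes the computation, which I would do assuming $\sigma_n\le 0$ (the case $\sigma_n>0$ being symmetric under interchanging $X_0$ and $X_1$). For the pair $F_{n,b_n}(\T,\X),F_{m_{\ell(n)},b_{m_{\ell(n)}}}(\T,\X)$ the lcm of leading monomials is $T_0^{b_n}X_0^{-\sigma_n}X_1^{\sigma_{m_{\ell(n)}}-\tau_{m_{\ell(n)}}}X_2^{\tau_n}$; in $S(\cdot,\cdot)$ the two $T_0^{b_n}$-terms cancel, and after rewriting exponents through the additivity identities the surviving binomial factors as $T_1^{b_{m_{\ell(n)}}}F_{\rho(n),b_{\rho(n)}}(\T,\X)$. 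So the standard Gröbner expression \eqref{sui} has the single nonzero coefficient $F_{ij\nu}(\T,\X)=T_1^{b_{m_{\ell(n)}}}$ at $\nu=\rho(n)$, and \eqref{gcond} is immediate because $T_0\succ_l T_1$ makes the $T_0$-monomial of $S$ its leading monomial. Since $\bX_{n,m_{\ell(n)}}/\mbox{lt}(F_{n,b_n}(\T,\X))=X_1^{\sigma_{m_{\ell(n)}}-\tau_{m_{\ell(n)}}}$ and $\bX_{n,m_{\ell(n)}}/\mbox{lt}(F_{m_{\ell(n)},b_{m_{\ell(n)}}}(\T,\X))=T_0^{b_{\rho(n)}}X_0^{-\sigma_n}X_2^{\tau_n}$, plugging these and the coefficient into \eqref{bs} yields exactly the stated $\bs_{n,m_{\ell(n)}}$. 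The pair $F_{n,b_n}(\T,\X),F_{\rho(n),b_{\rho(n)}}(\T,\X)$ is treated identically: the lcm of leading monomials is $T_0^{b_n}X_0^{-\sigma_{\rho(n)}}X_2^{\tau_{\rho(n)}}$, the $T_0^{b_n}$-terms cancel, and the remainder factors as $T_1^{b_{\rho(n)}}X_1^{\tau_n-\sigma_n}F_{m_{\ell(n)},b_{m_{\ell(n)}}}(\T,\X)$ — where $\tau_n-\sigma_n\ge 0$ because it equals $\deg_\X F_{n,b_n}(\T,\X)$ — so $F_{ij\nu}(\T,\X)=T_1^{b_{\rho(n)}}X_1^{-\sigma_n+\tau_n}$ at $\nu=m_{\ell(n)}$, and \eqref{bs} returns the stated $\bs_{n,\rho(n)}$.

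The only real work, and the main thing to be careful with, is the sign and exponent bookkeeping: one must verify that $-\sigma_{\rho(n)}$, $\tau_{\rho(n)}$, $-\tau_{m_{\ell(n)}}$, $\sigma_{m_{\ell(n)}}$ and $\tau_n-\sigma_n$ are all nonnegative in the claimed cases, so that every binomial written down is a genuine difference of monomials and the two halves of each \textbf{S}-polynomial really telescope. This is entirely dictated by the block-wise sign alternation of Proposition \ref{connection}-iv) together with $\sigma_j u+\tau_j(d-u)=b_j>0$; granting that, the two cancellations are purely formal, and the $\sigma_n>0$ branch of each formula follows by the same argument after swapping $X_0$ and $X_1$ and using that then $\sigma_{m_{\ell(n)}}\le 0$ and $\sigma_{\rho(n)}>0$.
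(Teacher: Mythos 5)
Your proof is correct and follows essentially the same route as the paper: compute the two \textbf{S}-polynomials from \eqref{fgen}, observe that each reduces in one step to a monomial multiple of $F_{\rho(n),b_{\rho(n)}}(\T,\X)$ or $F_{m_{\ell(n)},b_{m_{\ell(n)}}}(\T,\X)$ respectively, and then read off the syzygy from \eqref{bs}. The paper's own proof works out only the single prototype case $\rho(n)=n+1$, $\sigma_n\le 0$ in detail and declares the rest analogous, whereas you make explicit the arithmetic backbone (additivity $b_n=b_{\rho(n)}+b_{m_{\ell(n)}}$, $\sigma_n=\sigma_{\rho(n)}+\sigma_{m_{\ell(n)}}$, $\tau_n=\tau_{\rho(n)}+\tau_{m_{\ell(n)}}$ and the block-wise sign alternation) that makes the cancellations telescope in every case; that added bookkeeping is correct and genuinely useful. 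One small imprecision: the opposite-sign relation between $\sigma_n$ and $\sigma_{m_{\ell(n)}}$ is not quite a direct consequence of Proposition \ref{connection}-iv) alone (which gives constancy of sign within a block); it also uses Lemma \ref{corr}-ii) through Proposition \ref{connection}-ii), so the citation should be adjusted, but the fact itself is true and the argument goes through unchanged.
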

\begin{proof}
Assume $m_{\ell-1}\leq n<n+1<m_\ell,$ and $\sigma_n\leq0.$  All the other cases will follow analogously.  Following \eqref{spol}, we have
$$
\bS\big(F_{n,b_n}(\T,\X),\,F_{n+1,b_{n+1}}(\T,\X)\big)= X_0^{\sigma_{m_\ell}}X_2^{-\tau_{m_\ell}}\,F_{n,b_n}(\T,\X)-T_0^{b_{m_\ell}}\,F_{n+1,b_{n+1}}(\T,\X).$$ We expand this difference and  get
\begin{equation}\label{sss}
X_0^{\sigma_{m_\ell}}X_2^{-\tau_{m_\ell}}\,F_{n,b_n}(\T,\X)-T_0^{b_{m_\ell}}\,F_{n+1,b_{n+1}}(\T,\X)=T_1^{b_{n+1}}X_1^{-\sigma_n+\tau_n}\,F_{m_\ell,b_{m_\ell}}(\T,\X).
\end{equation}
 From here it is easy to deduce that
$$\bS\big(F_{n,b_n}(\T,\X),\,F_{n+1,b_{n+1}}(\T,\X)\big)=T_1^{b_{n+1}}X_1^{-\sigma_n+\tau_n}\,F_{m_\ell,b_{m_\ell}}(\T,\X),$$
and this identity satisfies \eqref{gcond}. Due to the definition of $\bs_{n,n+1}$ given in \eqref{bs}, we then have, from \eqref{sss},
$$\bs_{n,n+1}=X_0^{\sigma_{m_\ell}}X_2^{-\tau_{m_\ell}}\e_n-T_0^{b_{m_\ell}}\e_{n+1}-T_1^{b_{n+1}}X_1^{-\sigma_n+\tau_n}\e_{m_\ell},
$$
as claimed.
\end{proof}
\smallskip
The reason why we defined the lexicographic order $\prec_l$ depending on the value of $\sigma_q$ is because of the following claim.
\begin{lemma}\label{extra}
$$\begin{array}{l}
\bs_{q,q+2}=F_{q+2,0}(\T,\X)\e_{q}-F_{q,1}(\T,\X)\e_{q+2}\ \\ \\
\bs_{q+1,q+2}=\left\{\begin{array}{lcl}
X_0^{-\sigma_{q}}X_2^{\tau_{q}}\e_{q+1}-T_0\e_{q+2}-X_1^{\tau_{q+1}-\sigma_{q+1}}\e_{q} &\,\mbox{if}\,&\sigma_{q+1}\leq 0,\\
X_1^{-\sigma_q+\tau_q}\e_{q+1}-T_0\e_{q+2}-X_0^{\sigma_{q+1}}X_2^{-\tau_{q+1}}\e_{q},&\,\mbox{if}\,&\sigma_{q+1}> 0.
\end{array}
\right.
\end{array}$$
\end{lemma}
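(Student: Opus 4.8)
The plan is to compute the two $S$-polynomials $\bS\big(F_{q,1}(\T,\X),F_{q+2,0}(\T,\X)\big)$ and $\bS\big(F_{q+1,1}(\T,\X),F_{q+2,0}(\T,\X)\big)$ explicitly and then read off the corresponding syzygies through the general formula \eqref{bs}, exactly as in Lemma \ref{szg}. The preliminary step is to write down the three binomials involved: since $b_q=b_{q+1}=1$ and $b_{q+2}=0$, formula \eqref{fgen} presents $F_{q,1}$ and $F_{q+1,1}$ as binomials linear in $\T$, while $F_{q+2,0}$ is the implicit equation of the curve. I would also record that $\sigma_q$ and $\sigma_{q+1}$ have opposite signs and are linked by the relations \eqref{t1t} obtained in the proof of Proposition \ref{connection}-viii), together with the bounds $|\sigma_{q+1}|<d-u$ and $|\tau_{q+1}|<u$ from parts vi)--vii) of that proposition. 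Since $\prec_l$ is by definition the lexicographic order whose choice is dictated by the sign of $\sigma_q$, each of the two case-splits in the statement (made according to the sign of $\sigma_{q+1}$) lines up with one fixed choice of $\prec_l$.

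For $\bs_{q,q+2}$ the key point is that $\mbox{lm}(F_{q,1})$ and $\mbox{lm}(F_{q+2,0})$ are coprime monomials: when $\sigma_q>0$ they are $T_0X_1^{\sigma_q-\tau_q}$ and $X_0^{d-u}X_2^u$, and when $\sigma_q<0$ they are $T_0X_0^{-\sigma_q}X_2^{\tau_q}$ and $X_1^d$; in either case they involve disjoint variables. By Buchberger's coprimality criterion --- or, concretely, by a two-line computation using that both are binomials with leading coefficient $1$ --- the $S$-polynomial equals the non-leading monomial of $F_{q+2,0}$ times $F_{q,1}$, minus the non-leading monomial of $F_{q,1}$ times $F_{q+2,0}$, which is a representation of the form \eqref{sui} satisfying \eqref{gcond}. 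Substituting into \eqref{bs}, the monomial coefficients cancel in pairs and what survives is the Koszul syzygy $\bs_{q,q+2}=F_{q+2,0}(\T,\X)\,\e_q-F_{q,1}(\T,\X)\,\e_{q+2}$.

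For $\bs_{q+1,q+2}$ the leading terms are no longer coprime, so the division must actually be carried out. Take first $\sigma_{q+1}\le 0$, so $\sigma_q>0$ and $\prec_l$ orders $X_2\prec_l X_1\prec_l X_0\prec_l T_1\prec_l T_0$; then $\mbox{lm}(F_{q+1,1})=T_0X_0^{-\sigma_{q+1}}X_2^{\tau_{q+1}}$ and $\mbox{lm}(F_{q+2,0})=X_0^{d-u}X_2^u$, and the bounds $-\sigma_{q+1}<d-u$, $\tau_{q+1}<u$ give $\mbox{lcm}=T_0X_0^{d-u}X_2^u$. Expanding
\[
\bS\big(F_{q+1,1},F_{q+2,0}\big)=X_0^{\,d-u+\sigma_{q+1}}X_2^{\,u-\tau_{q+1}}F_{q+1,1}-T_0\,F_{q+2,0},
\]
the two terms carrying $X_0^{d-u}X_2^u$ cancel, and rewriting the surviving exponents via \eqref{t1t} (using $d-u+\sigma_{q+1}=\sigma_q$ and $u-\tau_{q+1}=-\tau_q$) leaves $T_0X_1^{d}-T_1X_0^{\sigma_q}X_2^{-\tau_q}X_1^{\tau_{q+1}-\sigma_{q+1}}$. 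The decisive observation is the exponent identity $(\sigma_q-\tau_q)+(\tau_{q+1}-\sigma_{q+1})=d$, again a direct consequence of \eqref{t1t}, which shows this remainder is exactly $X_1^{\tau_{q+1}-\sigma_{q+1}}F_{q,1}(\T,\X)$; this is a representation of type \eqref{sui}, it trivially satisfies \eqref{gcond}, and feeding it into \eqref{bs} produces the first formula for $\bs_{q+1,q+2}$. The case $\sigma_{q+1}>0$ is entirely parallel: now $\sigma_q<0$, the relevant leading monomials are $T_0X_1^{\sigma_{q+1}-\tau_{q+1}}$ and $X_1^{d}$, the $S$-polynomial reduces to $X_0^{\sigma_{q+1}}X_2^{-\tau_{q+1}}F_{q,1}(\T,\X)$, and \eqref{bs} yields the second formula.

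I expect the only genuine friction to be the exponent bookkeeping in the second half: keeping straight which of $\sigma_q,\tau_q,\sigma_{q+1},\tau_{q+1}$ is positive and which negative, and chasing the B\'ezout-type relations \eqref{t1t} so that the $X_0$- and $X_2$-exponents match and the remainder comes out as an honest monomial multiple of $F_{q,1}$. Everything else --- the coprimality argument for $\bs_{q,q+2}$ and the mechanical passage from each $S$-polynomial to its syzygy via \eqref{bs} --- is routine once the explicit shapes of $F_{q,1}$, $F_{q+1,1}$ and $F_{q+2,0}$ have been written out.
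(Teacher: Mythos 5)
Your proof is correct and follows essentially the same route as the paper: compute the two $S$-polynomials, reduce them, and read off the syzygies from \eqref{bs}. The only differences are cosmetic improvements --- you invoke coprimality of the leading monomials of $F_{q,1}$ and $F_{q+2,0}$ (in each sign case they involve disjoint variables) to conclude that $\bs_{q,q+2}$ is the Koszul syzygy in one stroke, whereas the paper expands the $S$-polynomial explicitly in both cases and identifies the same cancellation; and in the $\sigma_{q+1}\le 0$ branch you carry out the substitutions $d-u+\sigma_{q+1}=\sigma_q$, $u-\tau_{q+1}=-\tau_q$ that the paper leaves implicit. Note that your explicit bookkeeping actually yields $X_0^{\sigma_q}X_2^{-\tau_q}\e_{q+1}$ for that branch, not the $X_0^{-\sigma_q}X_2^{\tau_q}\e_{q+1}$ printed in the lemma; since $\sigma_q>0$ and $\tau_q<0$ here, the printed exponents would be negative, and the worked example $(d,u)=(10,3)$ --- where $\bs_{6,7}$ begins with $X_0^5X_2^2\e_6$, matching $X_0^{\sigma_q}X_2^{-\tau_q}$ with $(\sigma_q,\tau_q)=(5,-2)$ --- confirms that the statement as printed has a sign typo and your version is the right one, so you should flag rather than silently absorb this when you say it "produces the first formula."
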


\begin{proof}
Let us start by computing $\bs_{q,q+2}.$ For $\sigma_q\leq 0,$ we get
$$\begin{array}{ccl}
\bS\big(F_{q,1}(\T,\X), F_{q+2,0}(\T,\X)\big)&=&X_1^{d}F_{q,1}(\T,\X)-T_0X_0^{-\sigma_q}X_2^{\tau_q}F_{q+2,0}(\T,\X)\\
&=&X_0^{d-u}X_2^u F_{q,1}(\T,\X)-T_1X_1^{\tau_q-\sigma_q}F_{q+2,0}(\T,\X).
\end{array}
$$
Note that the above identity satisfies \eqref{gcond}. So, by the definition of $\bs_{q,q+2}$ given in \eqref{bs}, we have
$\bs_{q,q+2}=F_{q+2,0}( \T,\X)\e_{q}-F_{q,1}(\T,\X)\e_{q+1},
$
for this case. If $\sigma_q>0,$ then
$$\begin{array}{ccl}
\bS\big(F_{q,1}(\T,\X), F_{q+2,0}(\T,\X)\big)&=&X_0^{d-u}X_2^{u}F_{q,1}(\T,\X)-T_0X_1^{\sigma_q-\tau_q}F_{q+2,0}(\T,\X)\\
&=&X_1^dF_{q,1}(\T,\X)-T_1X_0^{\sigma_q}X_2^{-\tau_q}F_{q+2,0}(\T,\X),
\end{array}
$$ which
also satisfies \eqref{gcond}, so we get again, as claimed, $$\bs_{q,q+2}=F_{q+2,0}( \T,\X)\e_{q}-F_{q,1}(\T,\X)\e_{q+1}.$$
To compute $\bs_{q+1,q+2},$ suppose first $\sigma_{q+1}\leq0.$ Note that this implies $\sigma_q>0.$ Then
$$\begin{array}{ccl}
\bS\big(F_{q+1,1}(\T,\X), F_{q+2,0}(\T,\X)\big) & = &X_0^{d-u+\sigma_{q+1}}X_2^{u-\tau_{q+1}}F_{q+1,1}(\T,\X)-T_0F_{q+2,0}(\T,\X)\\
&=&-T_1X_0^{d-u+\sigma_{q+1}}X_1^{\tau_{q+1}-\sigma_{q+1}}X_2^{u-\tau_{q+1}}+T_0X_1^d\\
&=&X_1^{\tau_{q+1}-\sigma_{q+1}}\big(T_0X_1^{d-\tau_{q+1}+\sigma_{q+1}}-T_1X_0^{d-u+\sigma_{q+1}}X_2^{u-\tau_{q+1}}\big)\\
&=&X_1^{\tau_{q+1}-\sigma_{q+1}}F_{q,1}(\T,\X).
\end{array}
$$
This equality satisfies \eqref{gcond}, so -due to \eqref{bs}- we get
$$\bs_{q+1,q+2}=X_0^{d-u+\sigma_{q+1}}X_2^{u-\tau_{q+1}}\e_{q+1}-T_0\e_{q+2}-X_1^{\tau_{q+1}-\sigma_{q+1}}\e_{q},
$$
as claimed.
\par For the case $\sigma_{q+1}>0,$  we have
$$\begin{array}{ccl}
\bS\big(F_{q+1,1}(\T,\X), F_{q+2,0}(\T,\X)\big) & = &X_1^{d-\sigma_{q+1}+\tau_{q+1}}F_{q+1,1}(\T,\X)-T_0F_{q+2,0}(\T,\X)\\
&=&X_0^{d-u+\sigma_q}X_2^{u-\tau_q}F_{q,1}(\T,\X),
\end{array}
$$
and again this identity satisfies \eqref{gcond}. So, we have, by \eqref{bs},
$$\bs_{q+1,q+2}(\T,\X)=X_1^{d-\sigma_{q+1}+\tau_{q+1}}\e_{q+1}-T_0\e_{q+2}-X_0^{d-u+\sigma_q}X_2^{u-\tau_q}\e_{q},
$$
which proves the claim by noticing that
$$\begin{array}{rcl}
d-\sigma_{q+1}+\tau_{q+1}&=&-\sigma_q+\tau_q\\
d-u+\sigma_q&=&\sigma_{q+1}\\
u-\tau_q&=&-\tau_{q+1}.
\end{array}
$$
\end{proof}
\smallskip
The following claim is a straightforward consequence of Lemmas \ref{szg} and \ref{extra}.
\begin{lemma}\label{abb}
$\bs_{q,q+2}$ belongs to the $\K[\X]-$ module generated by  $\{\bs_{q,q+1},\bs_{q+1,q+2}\}.$ Moreover,
\begin{itemize}
\item if $\sigma_q\leq 0,$ then $
\bs_{q,q+2}=X_1^{-\sigma_q+\tau_q}\bs_{q,q+1}-X_0^{-\sigma_q}X_2^{\tau_q}\bs_{q+1,q+2};$
\item if $\sigma_q>0,$ then $\bs_{q,q+2}=X_0^{-\sigma_q}X_2^{\tau_q}\bs_{q,q+1}-X_1^{\sigma_q-\tau_q}\bs_{q+1,q+2}.
$
\end{itemize}
\end{lemma}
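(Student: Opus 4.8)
The plan is to prove Lemma~\ref{abb} by direct substitution: insert the closed forms for $\bs_{q,q+1}$, $\bs_{q+1,q+2}$ and $\bs_{q,q+2}$ supplied by Lemmas~\ref{szg} and~\ref{extra} into the two candidate identities of the statement and verify them coefficientwise on the basis vectors $\e_q,\e_{q+1},\e_{q+2}$ (all other components vanish on both sides). Before starting I would record the boundary data: $m_{\ell(q)}=q+1$, $\rho(q)=q+2$, $b_q=b_{q+1}=1$, $b_{q+2}=0$, so that $\bs_{q,q+1}=\bs_{q,m_{\ell(q)}}$ is the $n=q$ instance of the second block of~\eqref{2en1}; and the dichotomy, obtained from~\eqref{t1t} together with $0<|\sigma_{q+1}|<d-u$, that $\sigma_q\le 0\iff\sigma_{q+1}>0$. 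The two cases $\sigma_q\le 0$ and $\sigma_q>0$ are mirror images of one another under the exchange of the monomials $X_0^{*}X_2^{*}\leftrightarrow X_1^{*}$ dictated by~\eqref{fgen}, so it is enough to carry out one, say $\sigma_q\le 0$.

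In that case $\bs_{q,q+2}=F_{q+2,0}(\T,\X)\,\e_q-F_{q,1}(\T,\X)\,\e_{q+2}$ with $F_{q+2,0}(\T,\X)=X_1^d-X_0^{d-u}X_2^u$, while $\bs_{q,q+1}$ and $\bs_{q+1,q+2}$ are three-term syzygies whose $\e_{q+1}$-components are, up to sign, the two monomials $X_0^{-\sigma_q}X_2^{\tau_q}$ and $X_1^{-\sigma_q+\tau_q}$ (the two ``halves'' of the $\X$-part of $F_{q,1}$), which are coprime in the UFD $\K[\X]$. This already pins the combination down: imposing that the $\e_{q+1}$-coefficient of a $\K[\X]$-combination $\lambda_1\bs_{q,q+1}+\lambda_2\bs_{q+1,q+2}$ be zero forces $\lambda_1$ and $\lambda_2$ to be precisely those two monomials, up to a common scalar, and then the $\e_{q+1}$-component cancels identically. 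For the remaining two components I would substitute $\sigma_{q+1}-\sigma_q=d-u$ and $\tau_q-\tau_{q+1}=u$ (from~\eqref{t1t}), so that $(-\sigma_q+\tau_q)+(\sigma_{q+1}-\tau_{q+1})=d$ and $(-\sigma_q+\sigma_{q+1},\ \tau_q-\tau_{q+1})=(d-u,u)$: the $\e_q$-coefficient then collapses to $X_1^d-X_0^{d-u}X_2^u=F_{q+2,0}(\T,\X)$, and the $\e_{q+2}$-coefficient collapses, using $b_{q+1}=1$, $b_{q+2}=0$ and the B\'ezout identities $\sigma_n u+\tau_n(d-u)=b_n$ of Proposition~\ref{connection}(iii), to $-F_{q,1}(\T,\X)$. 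This is exactly $\bs_{q,q+2}$, which proves the identity for $\sigma_q\le 0$; the case $\sigma_q>0$ goes through verbatim after swapping the $X_1$ and $X_0X_2$ monomials and reading $F_{q+2,0}(\T,\X)=X_0^{d-u}X_2^u-X_1^d$.

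The only genuine work here is the sign-and-exponent bookkeeping around the last steps of the SERS, where~\eqref{fgen}, \eqref{2en1} and~\eqref{rrest} carry the convention switch governed by the sign of $\sigma_q$ (the same switch that defines the two branches of $\prec_l$); I expect this, rather than anything conceptual, to be the main obstacle, and I would defuse it by normalizing every monomial exponent through $\sigma_n u+\tau_n(d-u)=b_n$ and $\alpha_n u+\beta_n(d-u)=c_n$ before comparing. Once both identities are in hand, the conclusion of the lemma is immediate: $\bs_{q,q+2}$ lies in the $\K[\X]$-submodule, hence in the $\S$-submodule, generated by $\{\bs_{q,q+1},\bs_{q+1,q+2}\}$, so it is redundant in the standard syzygy generating set produced by Theorem~\ref{3713} and may be dropped without losing generation of $\mathrm{ker}(\Phi_1)$, which is exactly what is needed to cut that generating set down to $\cF_1$.
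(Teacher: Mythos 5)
Your approach — direct substitution of the closed forms from Lemmas~\ref{szg} and~\ref{extra} and componentwise verification — is exactly what the paper intends, since the paper presents Lemma~\ref{abb} without proof beyond the remark that it is a ``straightforward consequence'' of those two lemmas. The coprimality observation about the $\e_{q+1}$-components is a useful extra: because the $\e_{q+1}$-coefficients of $\bs_{q,q+1}$ and $\bs_{q+1,q+2}$ are, up to sign, the two coprime $\X$-monomials of $F_{q,1}$, any $\K[\X]$-combination that kills $\e_{q+1}$ must have multipliers that are, up to a common scalar, those two monomials; this pins the candidate combination down even before you finish the bookkeeping.

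You should be warned, however, that the ``sign-and-exponent bookkeeping'' you flag as the main obstacle is indeed where things go wrong if you take the paper's displayed formulas at face value. Specializing \eqref{2en1} at $n=q$ produces $-T_1^{b_{m_{\ell(q)}}}\e_{\rho(q)}=-T_1\e_{q+2}$ as the last term of $\bs_{q,q+1}$, but the S-polynomial $\bS(F_{q,1},F_{q+1,1})$ equals $-T_1F_{q+2,0}$, so \eqref{bs} actually yields $+T_1\e_{q+2}$; the displayed general formula does not specialize correctly to the boundary index (compare the worked example $\bs_{5,6}=X_0^2X_2\e_5-X_1^7\e_6+T_1\e_7$ in the introduction, where the sign is the correct one). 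Once the correct form of $\bs_{q,q+1}$ is in hand and you carry out the componentwise check you describe, you will find that the multipliers have the \emph{same} sign, i.e. $\bs_{q,q+2}=X_1^{-\sigma_q+\tau_q}\bs_{q,q+1}+X_0^{-\sigma_q}X_2^{\tau_q}\bs_{q+1,q+2}$ for $\sigma_q\le0$, rather than the difference printed in the lemma; and in the case $\sigma_q>0$ the multiplier of $\bs_{q,q+1}$ should read $X_0^{\sigma_q}X_2^{-\tau_q}$, not $X_0^{-\sigma_q}X_2^{\tau_q}$ (which has negative exponents). Your plan, as written, asserts that the $\e_q$-coefficient of the \emph{stated} minus-combination ``collapses to'' $F_{q+2,0}$, which it does not; the cancellation you need actually forces the plus. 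None of this changes the conclusion you ultimately need for the reduction of $\cF_1^*$ to $\cF_1$, namely that $\bs_{q,q+2}$ lies in the $\K[\X]$-span of $\{\bs_{q,q+1},\bs_{q+1,q+2}\}$, but if you write the proof you should carry out the substitution honestly rather than appealing to the printed signs, and flag the discrepancies with \eqref{2en1} and the statement of the lemma.
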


\smallskip
For $n=1,\ldots, q,$ we define the sequences $\{k_{1n}\}_{n=1,\ldots, q},\,\{k_{2n}\}_{n=1,\ldots, q}$ as follows:
\begin{itemize}
\item if $\rho(n)=n+1,$ then $k_{1n}=\rho(n),$ and $k_{2n}=m_{\ell(n)};$
\item if $\rho(n)=m_{\ell(n)+1},$ then $k_{1n}=m_{\ell(n)},$ and $k_{2n}=\rho(n).$
\end{itemize}
Note that we always have $k_{1n}<k_{2n}.$
Let $\cF_1^*$ be the set of all $2q$ syzygies defined in Lemma \ref{szg}, and also of $\bs_{q+1,q+2}.$ Set  $\cF_1:=\cF_1^*\setminus\{\bs_{q,q+2}\}.$ Note that the definition of $\cF_1$ is consistent with the one we made in Section \ref{setup}, and also we have that the bidegrees of each of these syzygies regarded as elements of $ \displaystyle{\bigoplus_{n=1}^{q+2}}\S(-(b_n, |\sigma_n-\tau_n|))$ satisfy \eqref{25252} if we declare $\bdeg(\e_n)=(b_n, | \sigma_n-\tau_n|)$.
\smallskip

\begin{proof}[Proof of Theorem \ref{phi1}]
Theorem \ref{3713} implies  that the family $\{\bs_{n,m}\}_{1\leq n<m\leq q+2}$ is a Gr\"obner Basis of $\mbox{ker}(\Phi_1)=\mbox{syz}(\cF_0)$ with respect to $\prec_{l,\cF_0}.$ We will first show that the leading module generated by this family is the same as the one generated by $\mbox{lt}(\cF_1^*),$ which will then imply that $\cF_1^*$ is also a Gr\"obner basis of this module.
\par
To show this, we compute explicitly the leading term of $\bs_{n,m}$  for $1\leq n<m< q+2.$  By using \eqref{liding}, we get straightforwardly
\begin{equation}\label{pprimer}
\mbox{lt}\big(\bs_{n,m}\big)=\left\{\begin{array}{lcl}
X_0^{-\sigma_m+\sigma_n}X_2^{\tau_m-\tau_n}\e_n &\,\mbox{if}&\,\sigma_n,\,\sigma_m\leq0\\
X_0^{-\sigma_m}X_2^{\tau_m}\e_n &\,\mbox{if}&\,\sigma_n>0,\,\sigma_m\leq0\\
X_1^{\sigma_m-\tau_m}\e_n &\,\mbox{if}&\,\sigma_n\leq0,\,\sigma_m>0\\
X_1^{\sigma_m-\tau_m-\sigma_n+\tau_n}\e_n &\,\mbox{if}&\,\sigma_n,\,\sigma_m>0.
\end{array}
\right.
\end{equation}
We also compute, for $n<q+2,$
$$
\mbox{lt}\big(\bs_{n,q+2}\big)=\left\{\begin{array}{lcl}
X_1^{d}\e_n &\,\mbox{if}&\,\sigma_n,\,\sigma_q\leq0\\
X_1^{d-\sigma_n+\tau_n}\e_n &\,\mbox{if}&\,\sigma_n>0,\,\sigma_q\leq0\\
X_0^{d-u+\sigma_n}X_2^{u-\tau_n}\e_n &\,\mbox{if}&\,\sigma_n\leq0,\,\sigma_q>0\\
X_0^{d-u}X_2^u\e_n &\,\mbox{if}&\,\sigma_n,\,\sigma_q>0.
\end{array}
\right.
$$
Note that for a fixed $n<q,$ there will be two minimal elements in the set $\{\mbox{lt}(\bs_{n,m})\}_{n<m},$ which will be found in the group \eqref{pprimer}. Indeed, suppose $\sigma_n\leq0$. Then, the minimal elements in this set are
$$X_0^{-\sigma_{m_0}+\sigma_n}X_2^{\tau_{m_0}-\tau_n}\e_n,\quad X_1^{\sigma_{m'_0}-\tau_{m'_0}-\sigma_n+\tau_n}\e_n,
$$
with $-\sigma_{m_0}$ and $\tau_{m_0}$ being minimal among those elements with $\sigma_m\leq0,$ and $\sigma_{m'_0}-\tau_{m'_0}$ being minimal among those with $\sigma_m>0.$ By the definition of the sequences $\{\sigma_m,\,\tau_m\}$ in \eqref{abcd}, and the properties of these sequences given in Proposition \ref{connection}-iv) and Proposition \ref{stronzo}, we get straightforwardly that  the minimums are achieved at
\begin{itemize}
\item $m_0=\rho(n)$ and $m'_0=m_{\ell(n)}$  if $\rho(n)=n+1,$ 
\item $m_0=m_{\ell(n)}$ and $m'_0=\rho(n)$ if $\rho(n)=m_{\ell(n)+1}.$
\end{itemize}
In both cases, the minimums coincide with $k_{2n}$ and $k_{1n}$ respectively. The case $\sigma_n>0$ follows analogously.
Note that the previous analysis excluded $\bs_{q,q+2}$ and $\bs_{q+1,q+2},$ but these two elements belong to $\cF_1^*$, so we
then deduce that $\mbox{lt}(\cF_1^*)$ generates $\mbox{lt}\big(\mbox{\rm ker}(\Phi_1)\big),$ and hence it is a Gr\"obner basis of this module.
\par
Let us see now that $\cF_1$ is a minimal Gr\"obner basis. To do this, note that by \eqref{liding} and Lemma \ref{extra}, $\mbox{lt}(\bs_{q,q+1})$ divides $\mbox{lt}(\bs_{q,q+2}),$ so this element can be removed from the list. This shows that $\cF_1$ is also a Gr\"obner basis of
$\mbox{\rm ker}(\Phi_1),$ and the fact that it is minimal follows straightforwardly due to the fact that, for each $n=1,\ldots, q+1,$ there are at most two monomials of the form $\X^{\ualpha_n}\e_n$ and $\X^{\ualpha'_n}\e_n$ which are leading terms of elements in the family $\cF_1.$ These monomials have disjoint support, so if we remove one of them from the family, we do not generate the same initial module. So, we have shown that $\cF_1$ is a minimal Gr\"obner basis. The fact that the basis is reduced follows by analyzing the explicit forms of the elements of $\cF_1$ given in Lemmas \ref{szg} and \ref{extra}. We leave the details to the reader.
\par
To conclude, we must prove that $\cF_1$ is a minimal set of generators.
Suppose there exists $\bs_{n,k_{in}}\in\cF_1$ which can be written as a polynomial combination of the others. Suppose first $n<q.$ We then have,
$$\begin{array}{ccl}
\bs_{n,k_{in}}&=&A(\T,\X)\bs_{n,k_{jn}}+\sum_{m\neq n}\big(B_{1m}(\T,\X)\bs_{m,k_{1m}}+B_{2m}(\T,\X)\bs_{m,k_{2m}}\big)\\
&&+C(\T,\X)\bs_{q,q+1}+D(\T,\X)\bs_{q+1,q+2},
\end{array}
$$
with $A(\T,\X),\,B_{im}(\T,\X),\,C(\T,\X),\,D(\T,\X)\in\S,$ and $\{i,\,j\}=\{1,\,2\}.$ Set $\T\mapsto0$ in the identity above. Due to the explicit form of these syzygies shown in Lemma \ref{szg}, and the fact that $b_k=0$ if and only if $k=q+2,$ we will have
$$\begin{array}{ccl}
\X^{\ualpha}\e_n&=&A({\bf0},\X)\X^{\ubeta}\e_n+\sum_{m\neq n}\big(B_{1m}({\bf0},\X)\X^{\ualpha_m}+B_{2m}({\bf0},\X)\X^{\ubeta_m}\big)\e_m\\
&&+C({\bf0},\X)\e_q+D({\bf0},\X)\e_{q+1},
\end{array}
$$
for some $\ualpha,\,\ubeta,\,\ualpha_m,\,\ubeta_m$ such that $\gcd(\X^\ualpha,\,\X^\ubeta)=1.$ By comparing the coefficients of $\e_n$ in both sides of the identity above, we have
$$\X^{\ualpha}=A({\bf 0},\X)\X^{\ubeta},
$$
which is impossible as this would imply $\X^{\ubeta}$ being a divisor of $\X^{\ualpha},$ a contradiction.
\par  For $n=q$, we get
$$
\bs_{q,q+1}=\sum_{m<q}\big(B_{1m}(\T,\X)\bs_{m,k_{1m}}+B_{2m}(\T,\X)\bs_{m,k_{2m}}\big)+
D(\T,\X)\bs_{q+1,q+2},
$$
with $B_{km}(\T,\X),\,D(\T,\X)\in\S.$ By setting $\T\mapsto{\bf0},$ we have now -due to Lemmas \ref{szg} and  \ref{extra}-
$$\X^{\ualpha}\e_q+\X^{\ubeta}\e_{q+1}=\sum_{m<q}\big(B_{1m}({\bf0},\X)\X^{\ualpha_m}+B_{2m}({\bf0},\X)\X^{\ubeta_m}\big)\e_m+D({\bf0},\X)\big(\X^{\ualpha'}\e_q+\X^{\ubeta'}\e_{q+1}\big),
$$
with $\gcd\big(\X^\ualpha,\X^{\ualpha'}\big)=\gcd\big(\X^\ubeta,\X^{\ubeta'}\big)=1.$ The claim now follows by comparing the coefficients of $\e_q$ in the above identity. The case $n=q+1$ follows mutatis mutandis this case.
\end{proof}

\bigskip
\section{Minimal Generators of $\mbox{ker}(\Phi_2)$}\label{minimal2}
In this section, we will prove Theorem \ref{phi2}. First, we make explicit the nontrivial syzygies in the family $\cF_2$ defined in the introduction.
The following claim can be checked straightforwardly from the formulas given in Lemma \ref{szg}.

\begin{lemma}\label{extra2}
In $\S^{2q+1},$ with the order induced by $\prec_{l,\cF_1^*},$ we have:
\begin{equation}\label{zuper}
\begin{array}{rcl}
\bS\big(\bs_{n,\rho(n)}, \bs_{n,m_{\ell(n)}}\big)&=&F_{\rho(n),b_{\rho(n)}}(\T,\X)\e_{m_{\ell(n)}}-F_{m_{\ell(n)},b_{m_{\ell(n)}}}(\T,\X)\,\e_{\rho(n)},\\
T_1^{b_{m_{\ell(n)}}}\bs_{n,\rho(n)}-T_0^{b_{m_{\ell(n)}}}\bs_{n,m_{\ell(n)}}&=&F_{n,b_n}(\T,\X)\e_{m_{\ell(n)}}-F_{m_{\ell(n)},b_{m_{\ell(n)}}}(\T,\X)\e_n.
\end{array}
\end{equation}\end{lemma}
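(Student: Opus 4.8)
The plan is to verify both identities by direct expansion, starting from the explicit binomial expressions for $F_{n,b_n}(\T,\X)$, $F_{m_{\ell(n)},b_{m_{\ell(n)}}}(\T,\X)$ and $F_{\rho(n),b_{\rho(n)}}(\T,\X)$ furnished by \eqref{fgen}, together with the formulas for $\bs_{n,\rho(n)}$ and $\bs_{n,m_{\ell(n)}}$ in Lemma \ref{szg}. Since those formulas are stated uniformly in $b_{m_{\ell(n)}}$, $b_{\rho(n)}$ and the $\sigma$'s and $\tau$'s, the two possibilities $\rho(n)=n+1$ and $\rho(n)=m_{\ell(n)+1}$ need not be separated; only the sign of $\sigma_n$ enters, and by symmetry it suffices to carry out the case $\sigma_n\le 0$, the case $\sigma_n>0$ being identical after swapping the obvious roles of the variables.

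Before expanding I would collect the three arithmetic facts that drive every cancellation: (a) $b_{m_{\ell(n)}}+b_{\rho(n)}=b_n$, which follows from the identities $b_{m_{\ell(n)}}=c_n$, $b_{\rho(n)}=b_n-c_n$ recorded after \eqref{rho}; (b) the additivity $\sigma_{\rho(n)}=\sigma_n-\sigma_{m_{\ell(n)}}$ and $\tau_{\rho(n)}=\tau_n-\tau_{m_{\ell(n)}}$, immediate from Proposition \ref{connection}(ii)--(iii) (equivalently from \eqref{abcd}, where $\alpha_n=\sigma_{m_{\ell(n)}}$, $\beta_n=\tau_{m_{\ell(n)}}$ for $m_{\ell(n)-1}\le n<m_{\ell(n)}$); and (c) the sign pattern: if $\sigma_n\le 0$ then, by Lemma \ref{corr}(ii) and Proposition \ref{connection}(ii), $\sigma_{m_{\ell(n)}}>0$, $\tau_{m_{\ell(n)}}<0$, $\sigma_{\rho(n)}<0$ and $\tau_{\rho(n)}>0$. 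Fact (c) determines which branch of \eqref{fgen} each of the three binomials uses: $F_{n,b_n}$ and $F_{\rho(n),b_{\rho(n)}}$ the first, $F_{m_{\ell(n)},b_{m_{\ell(n)}}}$ the second.

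For the second identity one expands $T_1^{b_{m_{\ell(n)}}}\bs_{n,\rho(n)}-T_0^{b_{m_{\ell(n)}}}\bs_{n,m_{\ell(n)}}$ on the basis $\e_n,\e_{\rho(n)},\e_{m_{\ell(n)}}$: the coefficient of $\e_{\rho(n)}$ is $-T_1^{b_{m_{\ell(n)}}}T_0^{b_{m_{\ell(n)}}}+T_0^{b_{m_{\ell(n)}}}T_1^{b_{m_{\ell(n)}}}=0$; the coefficient of $\e_n$ is $T_1^{b_{m_{\ell(n)}}}X_0^{\sigma_{m_{\ell(n)}}}X_2^{-\tau_{m_{\ell(n)}}}-T_0^{b_{m_{\ell(n)}}}X_1^{\sigma_{m_{\ell(n)}}-\tau_{m_{\ell(n)}}}=-F_{m_{\ell(n)},b_{m_{\ell(n)}}}(\T,\X)$ by (c); and the coefficient of $\e_{m_{\ell(n)}}$ is $T_0^{b_{m_{\ell(n)}}+b_{\rho(n)}}X_0^{-\sigma_n}X_2^{\tau_n}-T_1^{b_{m_{\ell(n)}}+b_{\rho(n)}}X_1^{\tau_n-\sigma_n}$, which by (a) equals $F_{n,b_n}(\T,\X)$. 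This gives the second displayed equality. For the first identity one notes, from \eqref{liding} (or by inspecting Lemma \ref{szg}), that $\mbox{lm}(\bs_{n,\rho(n)})=X_0^{\sigma_{m_{\ell(n)}}}X_2^{-\tau_{m_{\ell(n)}}}\e_n$ and $\mbox{lm}(\bs_{n,m_{\ell(n)}})=X_1^{\sigma_{m_{\ell(n)}}-\tau_{m_{\ell(n)}}}\e_n$, so that by \eqref{spol}
$$\bS\big(\bs_{n,\rho(n)},\bs_{n,m_{\ell(n)}}\big)=X_1^{\sigma_{m_{\ell(n)}}-\tau_{m_{\ell(n)}}}\,\bs_{n,\rho(n)}-X_0^{\sigma_{m_{\ell(n)}}}X_2^{-\tau_{m_{\ell(n)}}}\,\bs_{n,m_{\ell(n)}}.$$
Expanding, the $\e_n$–terms cancel, the coefficient of $\e_{\rho(n)}$ is $T_1^{b_{m_{\ell(n)}}}X_0^{\sigma_{m_{\ell(n)}}}X_2^{-\tau_{m_{\ell(n)}}}-T_0^{b_{m_{\ell(n)}}}X_1^{\sigma_{m_{\ell(n)}}-\tau_{m_{\ell(n)}}}=-F_{m_{\ell(n)},b_{m_{\ell(n)}}}(\T,\X)$, and the coefficient of $\e_{m_{\ell(n)}}$ is $T_0^{b_{\rho(n)}}X_0^{\sigma_{m_{\ell(n)}}-\sigma_n}X_2^{\tau_n-\tau_{m_{\ell(n)}}}-T_1^{b_{\rho(n)}}X_1^{(\sigma_{m_{\ell(n)}}-\tau_{m_{\ell(n)}})-(\sigma_n-\tau_n)}$, which by (b) equals $T_0^{b_{\rho(n)}}X_0^{-\sigma_{\rho(n)}}X_2^{\tau_{\rho(n)}}-T_1^{b_{\rho(n)}}X_1^{\tau_{\rho(n)}-\sigma_{\rho(n)}}=F_{\rho(n),b_{\rho(n)}}(\T,\X)$. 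This is the first displayed formula.

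The computation is purely mechanical; the only thing requiring attention is the sign bookkeeping — keeping track of which binomial uses which branch of \eqref{fgen} — which is exactly what fact (c) settles. I do not expect any genuine obstacle beyond the notational burden: the case $\sigma_n>0$ and the two possibilities for $\rho(n)$ introduce nothing new, and the verification that the resulting vectors are indeed syzygies of $\cF_1$ (needed only for Proposition \ref{aiecomae}) is then a formality once the two identities are in place.
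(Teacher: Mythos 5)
Your computation is correct, and it takes the only reasonable route: expand both sides from the explicit formulas of Lemma \ref{szg} together with \eqref{fgen}, using the arithmetic facts $b_{m_{\ell(n)}}+b_{\rho(n)}=b_n$ and $\sigma_{\rho(n)}=\sigma_n-\sigma_{m_{\ell(n)}}$, $\tau_{\rho(n)}=\tau_n-\tau_{m_{\ell(n)}}$, plus the sign pattern that forces $F_{m_{\ell(n)}}$ into the opposite branch of \eqref{fgen} from $F_n$ and $F_{\rho(n)}$. The paper's text for this lemma is nothing more than the remark that it ``can be checked straightforwardly from the formulas given in Lemma \ref{szg}''; your proposal simply supplies the check that the authors declined to write out, in exactly the spirit they had in mind.
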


We consider now the module $\mbox{syz}(\cF_1)\subset\S^{2q},$ where we denote with $\{\e_{n,k_{in}}\}_{s_{n,k_{in}}\in\cF_1},$ the canonical basis of $\S^{2q}.$ We consider also $\prec_{l,\cF_1},$ the order induced by $\cF_1$, as defined in \eqref{prorder}, where we sort the pairs $\{(n,k_{in})\}$ with the lexicographic order of $\N\times\N.$

\begin{proposition}\label{aiecomae}
For $n=1,\ldots, q,$ we have
\par If $\sigma_n\leq0$ and $\rho(n)=n+1,$  then
$$\begin{array}{ccl}
\bs_{(n,\rho(n)),(n,m_{\ell(n)})} &=&
X_1^{\sigma_{m_{\ell(n)}}-\tau_{m_{\ell(n)}}}\e_{n,\rho(n)}-X_0^{\sigma_{m_{\ell(n)}}}X_2^{-\tau_{m_{\ell(n)}}}\e_{n,m_{\ell(n)}}\\ && +T_0^{b_{m_{\ell(n)}}}\e_{\rho(n),m_{\ell(n)}}-T_1^{b_{m_{\ell(n+1)}}}\e_{\rho(n),\rho(n+1)}.
\end{array}
$$
If  $\sigma_n\leq0$ and $\rho(n)=m_{\ell(n)+1},$ then
$$\begin{array}{ccl}
\bs_{(n,m_{\ell(n)}),(n,\rho(n))}&=&X_0^{\sigma_{m_{\ell(n)}}}X_2^{-\tau_{m_{\ell(n)}}}\e_{n,m_{\ell(n)}}-X_1^{\sigma_{m_{\ell(n)}}-\tau_{m_{\ell(n)}}}\e_{n,\rho(n)}\\ &&+T_0^{b_{\rho(n)}}\e_{n+1,\rho(n)}-T_1^{b_{\rho(n)}}\e_{n+1,\rho(n+1)}.
\end{array}$$
If  $\sigma_n>0$ and $\rho(n)=n+1,$ then
$$\begin{array}{ccl}
\bs_{(n,\rho(n)),(n,m_{\ell(n)})}&=&
X_0^{-\sigma_{m_{\ell(n)}}}X_2^{\tau_{m_{\ell(n)}}}\e_{n,\rho(n)}-X_1^{\tau_{m_{\ell(n)}}-\sigma_{m_{\ell(n)}}}\e_{n,m_{\ell(n)}}\\
&&+T_0^{b_{m_{\ell(n)}}}\e_{\rho(n),m_{\ell(n)}}-T_1^{b_{m_{\ell(n+1)}}}\e_{\rho(n),\rho(n+1)}.
\end{array}
$$
If  $\sigma_n>0$ and $\rho(n)=m_{\ell(n)+1},$ then
$$\begin{array}{ccl}
\bs_{(n,m_{\ell(n)}),(n,\rho(n))}&=&X_1^{\tau_{m_{\ell(n)}}-\sigma_{m_{\ell(n)}}}\e_{n,m_{\ell(n)}}-X_0^{-\sigma_{m_{\ell(n)}}}X_2^{\tau_{m_{\ell(n)}}}\e_{n,\rho(n)}\\
&&+T_0^{b_{\rho(n)}}\e_{n+1,\rho(n)}-T_1^{b_{\rho(n)}}\e_{n+1,\rho(n+1)}.
\end{array}
$$
\end{proposition}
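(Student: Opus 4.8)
The proof follows the general recipe of Section~\ref{GB}. By Theorem~\ref{phi1}, $\cF_1$ is a reduced Gr\"obner basis of $\mbox{ker}(\Phi_1)=\mbox{syz}(\cF_0)\subset\S^{q+2}$ with respect to $\prec_{l,\cF_0}$, so Theorem~\ref{3713} guarantees that the vectors $\bs_{i,j}$ produced by formula~\eqref{bs} form a Gr\"obner basis of $\mbox{syz}(\cF_1)\subset\S^{2q}$ with respect to $\prec_{l,\cF_1}$; the only task is to evaluate \eqref{bs} on the pairs of interest. For a fixed $n$, the two members of $\cF_1$ whose leading monomials are multiples of $\e_n$ are precisely $\bs_{n,\rho(n)}$ and $\bs_{n,m_{\ell(n)}}$ (this is how the family \eqref{2en1} was set up), so that is the pair to feed into \eqref{bs}. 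First I would record, from \eqref{liding} together with the explicit shapes in Lemma~\ref{szg} and the sign information of Proposition~\ref{connection}-iv), that for $\sigma_n\le 0$
$$\mbox{lm}(\bs_{n,\rho(n)})=X_0^{\sigma_{m_{\ell(n)}}}X_2^{-\tau_{m_{\ell(n)}}}\e_n,\qquad \mbox{lm}(\bs_{n,m_{\ell(n)}})=X_1^{\sigma_{m_{\ell(n)}}-\tau_{m_{\ell(n)}}}\e_n,$$
with the mirrored monomials (the roles of $\{X_0,X_2\}$ and $X_1$ exchanged) when $\sigma_n>0$. Dividing $\mbox{lcm}\big(\mbox{lm}(\bs_{n,\rho(n)}),\mbox{lm}(\bs_{n,m_{\ell(n)}})\big)$ by these leading terms reproduces exactly the first two monomials in each of the four displayed formulas, so the whole content of the Proposition reduces to producing a standard representation, with respect to $\cF_1$, of the $S$-polynomial $S\big(\bs_{n,\rho(n)},\bs_{n,m_{\ell(n)}}\big)$: the tail terms of the desired syzygy are then read off from \eqref{bs}.

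The key input for that step is Lemma~\ref{extra2}, which already evaluates this $S$-polynomial to the Koszul-type syzygy $F_{\rho(n),b_{\rho(n)}}(\T,\X)\,\e_{m_{\ell(n)}}-F_{m_{\ell(n)},b_{m_{\ell(n)}}}(\T,\X)\,\e_{\rho(n)}$ on the pair $F_{\rho(n)},F_{m_{\ell(n)}}$. The plan is then to exhibit the representation
$$F_{\rho(n),b_{\rho(n)}}(\T,\X)\,\e_{m_{\ell(n)}}-F_{m_{\ell(n)},b_{m_{\ell(n)}}}(\T,\X)\,\e_{\rho(n)}=-T_0^{c}\,\bs'+T_1^{c}\,\bs'',$$
where $\bs',\bs''$ are the two members of $\cF_1$ carrying first index $n+1$ (namely $\bs_{\rho(n),m_{\ell(n)}},\bs_{\rho(n),\rho(n+1)}$ if $\rho(n)=n+1$, and $\bs_{n+1,\rho(n)},\bs_{n+1,\rho(n+1)}$ if $\rho(n)=m_{\ell(n)+1}$) and $c$ is the $\T$-exponent dictated by the SERS ($c=b_{m_{\ell(n)}}=c_n$ in the first case, $c=b_{\rho(n)}=b_n-c_n$ in the second). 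Verifying this identity is a one-step substitution of the forms of $\bs',\bs''$ from Lemma~\ref{szg}: the $\e_{\rho(n+1)}$-components cancel because $T_0^cT_1^c$ occurs with opposite signs, the $\e_{\rho(n)}$-component collapses to $-F_{m_{\ell(n)},b_{m_{\ell(n)}}}(\T,\X)$, and the $\e_{m_{\ell(n)}}$-component collapses to $F_{\rho(n),b_{\rho(n)}}(\T,\X)$ once one feeds in the additive SERS relations $b_n=b_{\rho(n)}+b_{m_{\ell(n)}}$, $\sigma_n=\sigma_{\rho(n)}+\sigma_{m_{\ell(n)}}$ and $\tau_n=\tau_{\rho(n)}+\tau_{m_{\ell(n)}}$ coming from \eqref{abcd} (equivalently Proposition~\ref{connection}). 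One must also check that this representation is \emph{standard}, i.e.\ that it satisfies \eqref{gcond}: that neither $T_0^c\,\mbox{lm}(\bs')$ nor $T_1^c\,\mbox{lm}(\bs'')$ exceeds $\mbox{lm}\big(S(\bs_{n,\rho(n)},\bs_{n,m_{\ell(n)}})\big)$ in the order $\prec_{l,\cF_0}$; since all of these leading monomials are read off from \eqref{liding}, this is a short comparison of exponent vectors. Plugging the representation into \eqref{bs} then produces exactly the claimed formula for $\bs_{(n,\rho(n)),(n,m_{\ell(n)})}$.

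The remaining work is bookkeeping over the four cases of the statement, which differ only in (i) the sign of $\sigma_n$, controlling whether the binomials $F_{\bullet,b_\bullet}(\T,\X)$ and the monomial coefficients are written using $\{X_0,X_2\}$ or $X_1$, and (ii) whether $\rho(n)=n+1$ or $\rho(n)=m_{\ell(n)+1}$, which governs which of $n$ and $\rho(n)$ is the smaller index and hence the exact placement of the symbols $\e_{i,j}$; each case is dispatched by the same two computations after selecting the matching branch of Lemma~\ref{szg}. The point that needs the most care is the top of the range: for $n=q$ one has $m_{\ell(q)}=q+1$ and $\rho(q)=q+2$, so the relevant pair is $\{\bs_{q,q+1},\bs_{q,q+2}\}$, and $\bs_{q,q+2}$ is not a member of $\cF_1$ (it was discarded in passing from $\cF_1^*$ to $\cF_1$ in Section~\ref{minimal1}). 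There one first uses Lemma~\ref{abb} to rewrite $\bs_{q,q+2}$ through $\bs_{q,q+1}$ and $\bs_{q+1,q+2}$ before applying \eqref{bs}, and likewise for $n=q-1$ the element $\bs_{q+1,q+2}$ enters in place of a second ``first index $q$'' syzygy; checking that the answer is still of the stated shape is routine, and it is this $n=q$ syzygy that turns out to be superfluous, which is why $\cF_2$ keeps only the indices $n=1,\dots,q-1$.
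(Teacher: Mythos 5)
Your proposal follows the same route as the paper's proof: apply Theorem~\ref{3713} to the Gr\"obner basis $\cF_1$, evaluate the $S$-polynomial $\bS\big(\bs_{n,\rho(n)},\bs_{n,m_{\ell(n)}}\big)$ via Lemma~\ref{extra2}, produce a standard representation with respect to $\cF_1$ (satisfying \eqref{gcond}), and read off the syzygy from \eqref{bs}. The paper carries out the first case ($\sigma_n\le 0$, $\rho(n)=n+1$) in detail, citing the \emph{second} identity of Lemma~\ref{extra2} with $n$ replaced by $n+1$ to obtain
\[
\bS\big(\bs_{n,n+1},\bs_{n,m_\ell}\big)=T_1^{b_{m_\ell}}\bs_{n+1,\rho(n+1)}-T_0^{b_{m_\ell}}\bs_{n+1,m_\ell},
\]
and then says the other three cases are analogous; you propose instead to verify the same identity by direct substitution of the shapes from Lemma~\ref{szg} together with the additive SERS relations $b_n=b_{\rho(n)}+b_{m_{\ell(n)}}$, $\sigma_n=\sigma_{\rho(n)}+\sigma_{m_{\ell(n)}}$, $\tau_n=\tau_{\rho(n)}+\tau_{m_{\ell(n)}}$. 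This is equivalent (indeed Lemma~\ref{extra2} is itself declared ``straightforward from Lemma~\ref{szg}''), so the core argument is the same.

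One part of your proposal is off in the bookkeeping near the top of the range, and it is worth getting straight because the paper itself is loose there. You say that for $n=q$ one must handle the pair $\{\bs_{q,q+1},\bs_{q,q+2}\}$ via Lemma~\ref{abb}. But since $\bs_{q,q+2}\notin\cF_1$, there \emph{is} no $S$-polynomial to compute for $n=q$: the only element of $\cF_1$ whose leading monomial sits in coordinate $\e_q$ is $\bs_{q,q+1}$, so $n=q$ contributes nothing to $\mbox{syz}(\cF_1)$. That is exactly why $\cF_2$ is indexed by $n=1,\dots,q-1$, and the proposition's range ``$n=1,\dots,q$'' should be read as an overstatement. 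Where Lemma~\ref{abb} genuinely comes in is for $n=q-1$: the standard representation $T_1^{b_{m_\ell}}\bs_{n+1,\rho(n+1)}-T_0^{b_{m_\ell}}\bs_{n+1,m_\ell}$ for $n=q-1$ produces the term $\bs_{q,\rho(q)}=\bs_{q,q+2}$, which is not in $\cF_1$ and must be re-expressed through $\bs_{q,q+1}$ and $\bs_{q+1,q+2}$ using Lemma~\ref{abb}; the tail coefficient $\e_{q,\rho(q)}$ should accordingly become $\e_{q+1,q+2}$ in the final formula. You have the right tool but apply it to the wrong $n$. Since the paper also glosses over this, this does not invalidate your approach, but be careful that this is the one step where Lemma~\ref{abb} is actually used, and only for $n=q-1$.
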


\begin{proof}
We will prove the first case, all the others follow analogously. For a fixed $n,$ we will denote by $\ell$ the value $\ell(n).$ So, we have
$m_{\ell-1}\leq n<n+1<m_\ell,$ and $\sigma_n\leq0.$ By using definition \eqref{spol},  identity \eqref{pprimer} and Lemma \ref{extra2}, we have
\begin{equation}\label{kokk}
\begin{array}{ccl}
\bS\big(\bs_{n,n+1}, \bs_{n,m_{\ell}}\big)&=&F_{n+1,b_{n+1}}(\T,\X)\e_{m_{\ell}}-F_{m_{\ell},b_{m_{\ell}}}(\T,\X)\,\e_{n+1}\\
&=&X_1^{\sigma_{m_\ell}-\tau_{m_\ell}}\bs_{n,n+1}-X_0^{\sigma_{m_\ell}}X_2^{-\tau_{m_\ell}}\bs_{n,m_\ell}.
\end{array}
\end{equation}
From the expression above and the definition of $\prec_{\l,\cF_1}$ given in \eqref{prorder}, it is easy to  compute the leading term of $\bS\big(\bs_{n,n+1}, \bs_{n,m_{\ell}}\big)$ which turns out to be $-T_0^{b_{m_\ell}}X_1^{\sigma_{m_\ell}-\tau_{m_\ell}}\e_{n+1}.$ So, to get an expression like \eqref{sui} satisfying \eqref{gcond}, we must substract from \eqref{kokk} $-T_0^{b_{m_\ell}}\,\bs_{n+1,m_\ell},$ and get
$$\begin{array}{ccl}
\bS\big(\bs_{n,n+1}, \bs_{n,m_{\ell}}\big)&=&\big(F_{n+1,b_{n+1}}(\T,\X)\e_{m_{\ell}}-F_{m_{\ell},b_{m_{\ell}}}(\T,\X)\,\e_{n+1}+T_0^{b_{m_\ell}}\,\bs_{n+1,m_\ell}\big)-T_0^{b_{m_\ell}}\,\bs_{n+1,m_\ell}\\
&=&T_1^{b_{m_\ell}}\bs_{n+1,\rho(n+1)}-T_0^{b_{m_\ell}}\,\bs_{n+1,m_\ell},
\end{array}
$$
where the last equality follows from the second identity in \eqref{zuper} with $n$ replaced by $n+1$ (note that in this case $\ell(n+1)=\ell$). Clearly, this expression satisfies \eqref{gcond} and so, due to \eqref{bs}, we have then
$$\bs_{(n,n+1), (n,m_\ell)}=
X_1^{\sigma_{m_\ell}-\tau_{m_\ell}}\e_{n,n+1}-X_0^{\sigma_{m_\ell}}X_2^{-\tau_{m_\ell}}\e_{n,m_\ell}-T_1^{b_{m_\ell}}\e_{n+1,\rho(n+1)}+T_0^{b_{m_\ell}}\,\e_{n+1,m_\ell},
$$
which is what we wanted to prove.
\end{proof}
\smallskip
Note that all the syzygies considered in Proposition \ref{aiecomae} are either of the form
$\bs_{(n,\rho(n)),(n,\ell(n))},$ or $\bs_{(n,\ell(n))(n,\rho(n))}.$ We will denote them with $\bs_{n,\rho(n),\ell(n)}$ for short. With this notation,  \eqref{consistent2} holds thanks to Proposition \ref{aiecomae}.

\begin{proof}[Proof of Theorem \ref{phi2}]
We apply again Theorem \ref{3713} to $\cF_1$ which we know is a Gr\"obner basis for $\prec_{l,\cF_0}$ of $\mbox{syz}(\cF_0)\simeq\mbox{ker}(\Phi_1)$ by Theorem \ref{phi1}. We get then that $\{\bs_{(n,k),\,(n',k')}\}_{(n,k)\prec (n',k')}$ is a Gr\"obner basis of $\mbox{syz}(\cF_1),$ where $\prec$ denotes the lexicographic order in $\N\times\N.$
\par We will first detect which of these syzygies are not zero. From \eqref{liding}, we get that
$$\mbox{\rm lm}\big(\bs_{n,k}\big)=\T^{\ualpha}\X^\ubeta\,\e_n,
$$ for some monomial $\T^{\ualpha}\X^\ubeta.$This shows that for $n\neq n',$ we will have
$$\mbox{\rm lcm}\left(\mbox{\rm lm}\big(\bs_{n,k}\big),\mbox{\rm lm}\big(\bs_{n',k'}\big)\right)={\bf0},
$$ as these monomials do not have the same support. Hence,  $\bs_{(n,k),(n',k')}={\bf0}$ as well. This implies that the only nonzero syzygies are actually those in $\cF_2.$ To see that the basis is reduced, we just have to note that, due to \eqref{liding} again,
\begin{equation}\label{remate}
\mbox{\rm lt}\big(\bs_{n,\rho(n),\ell(n)}\big)=\T^\ualpha\X^\ubeta\e_{n,k_{n}}
\end{equation}
for a monomial $\T^\ualpha\X^\ubeta\in\S,$ and $k_n\in\{\rho(n),\,\ell(n)\}.$ This is the only element in $\cF_2$ having  a leading term being a monomial in the coordinate $\e_{n,k_{n}},$ so cannot be neither reduced or removed, which implies straightforwardly that $\cF_2$ is a minimal basis of this module. The fact that it is reduced follows easily by inspecting the explicit form of the elements of $\cF_2$ given in Proposition \ref{aiecomae} .
\par
We are left to see that the set $\cF_2$ is $\S$-linearly independent. This will be done by considering now $\mbox{syz}(\cF_2).$ We aply Theorem \ref{3713} to $\cF_2$ and know that this syzygy module is generated by all the syzygies of elements of $\cF_2$ with respect to $\prec_{l,\cF_2}.$ But due to \eqref{remate}, we see that different syzygies have leading terms in different coordinates, which implies that
$$\bS\left(\bs_{n,\rho(n),\ell(n)},\bs_{n',\rho(n'),\ell(n')}\right)={\bf0}
$$
if $n\neq n'.$ This shows that $\mbox{syz}(\cF_2)={\bf0},$ or equivalently that the family $\cF_2$ is $\S$-linearly independent.
\end{proof}

\bigskip
\section{Adjoints}\label{adj}
In this section, we will state and prove  Lemma \ref{ddim} and Theorem \ref{adjj},  from which one deduces Theorem \ref{kkk} straightforwardly. All along this section we will assume that $\K$ is an algebraically closed field of characteristic zero, and also that $u>1,$  as the case $u=1$ corresponds to the so-called {\em monoid curve}, and the study of pencils of adjoints for this family of parametrizations has been covered already in \cite[Propositions 4.3 \& 4.4]{bus09}.

\begin{lemma}\label{sing}
Let $\cC_{u,d}$ be the rational curve defined as the image of \eqref{varphi}. If $u>1,$ then $\cC_{u,d}$ has two singular points:
\begin{itemize}
\item $\bp_0:=(1:0:0)$ with multiplicity $u,$
\item $\bp_\infty:=(0:0:1)$ with multiplicity $d-u.$
\end{itemize}
\end{lemma}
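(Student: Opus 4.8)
The plan is to work from the implicit equation of $\cC_{u,d}$. As recalled in Section~\ref{zero}, this curve is the plane hypersurface $V(F)\subset\P^2_\K$ with $F=X_1^d-X_0^{d-u}X_2^u$; that the image of $\varphi$ is exactly $V(F)$, and that $F$ is reduced (indeed irreducible), follows from $\gcd(d,u)=1$, which makes $\varphi$ birational onto its degree-$d$ image.

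First I would determine the singular locus. Since $\mathrm{char}(\K)=0$, the integers $d,\ d-u,\ u$ are invertible in $\K$, so a point of $\cC_{u,d}$ is singular precisely when $\partial F/\partial X_0=-(d-u)X_0^{d-u-1}X_2^u$, $\partial F/\partial X_1=dX_1^{d-1}$ and $\partial F/\partial X_2=-uX_0^{d-u}X_2^{u-1}$ vanish simultaneously. From $\partial F/\partial X_1=0$ one gets $X_1=0$ (as $d\geq 2$), and then $F=0$ forces $X_0^{d-u}X_2^u=0$, i.e.\ $X_0=0$ or $X_2=0$; this isolates the two candidates $\bp_\infty=(0:0:1)$ and $\bp_0=(1:0:0)$, the images of the coordinate points $(0:1)$ and $(1:0)$. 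A direct substitution then confirms that all three partials vanish at each of them, so that $\mathrm{Sing}(\cC_{u,d})=\{\bp_0,\bp_\infty\}$. This last check is exactly the step that uses $u>1$ (so $u-1\geq 1$) together with $d-u>u\geq 2$ (so $d-u-1\geq 1$), both consequences of the standing hypotheses $u>1$ and $u<d/2$; note that if $u=1$ the point $\bp_0$ would be smooth, which is why that case is set aside as the monoid curve.

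Finally I would read off the multiplicities from the local equations. Dehomogenizing $F$ at $\bp_0$ (setting $X_0=1$) gives $X_1^d-X_2^u$, whose lowest-degree homogeneous part is $-X_2^u$ since $u<d$, so $m_{\bp_0}(\cC_{u,d})=u$; dehomogenizing at $\bp_\infty$ (setting $X_2=1$) gives $X_1^d-X_0^{d-u}$, whose lowest-degree part is $-X_0^{d-u}$ since $d-u<d$, so $m_{\bp_\infty}(\cC_{u,d})=d-u$. I do not expect a genuine obstacle here; the only points requiring care are citing correctly that $F$ is the reduced defining polynomial of $\cC_{u,d}$ and keeping track of which of the inequalities among $u$, $d-u$ and $d$ are strict, all of which are guaranteed by $u>1$ and $u<d/2$.
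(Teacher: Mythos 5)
Your argument is correct and coincides in substance with the paper's: both determine $\mathrm{Sing}(\cC_{u,d})$ by vanishing of the partials of $X_0^{d-u}X_2^u - X_1^d$ (the paper's $\pm F_{q+2,0}$), using $u>1$ at this step, and then read off the multiplicities from the lowest-degree part of the dehomogenized equation at each singular point (the paper's ``affine Taylor expansion''). You simply write out the intermediate steps the paper leaves to the reader; there is no difference in method.
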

\begin{proof}
Recall that $\pm F_{q+2,0}(\T,\X)=X_0^{d-u}X_2^u-X_1^d$ is an irreducible polynomial defining $\cC_{u,d}.$  To compute the singular points of this curve, we have to look at the zeroes of the partial derivatives of this polynomial. As $u>1,$ one easily gets that
$$\mbox{Sing}(\cC_{u,d})=\{(1:0:0),\,(0:0:1)\}.$$
Computing the ``affine'' Taylor expansion of $F_{q+2,0}(\T,\X)$ around these points, we easily get the multiplicities, which concludes the proof.
\end{proof}
\smallskip
\begin{remark}
Lema \ref{sing} implies that the singularities of $\cC_{u,d}$ are not ordinary, as if this were the case, then by applying the genus formula (see for instance \cite{wal50}) we would get
$$u(u-1)+(d-u)(d-u-1)=(d-1)(d-2),
$$
which is impossible unless $u+1=d,$ contradicting the fact $u<\frac{d}{2}.$
\end{remark}

The following result will be useful to compute dimensions of pencils of adjoints.
\begin{lemma}\label{diof}
For $j\leq(d-1)(u-1),$ (resp. $j\leq (d-1)(d-u-1)$) there is at most one $\ualpha=(\alpha_1,\,\alpha_2,\,\alpha_3)\in\N^3$ such that  $u\alpha_1+d\alpha_2=j$ (resp.
$d\alpha_0+(d-u)\alpha_1=j$).
\end{lemma}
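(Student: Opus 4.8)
The statement is really a uniqueness assertion about nonnegative solutions of a linear Diophantine equation $a\gamma + b\delta = j$ in two unknowns, where $(a,b) = (u,d)$ (resp. $(a,b)=(d, d-u)$), and the range restriction on $j$ is exactly the Frobenius-type bound $(d-1)(u-1)$ (resp. $(d-1)(d-u-1)$). The key arithmetic input is that $\gcd(u,d)=1$ (resp. $\gcd(d,d-u)=\gcd(d,u)=1$), so the general integer solution of $u\alpha_1 + d\alpha_2 = j$ is $(\alpha_1,\alpha_2) = (\alpha_1^{(0)} + kd,\; \alpha_2^{(0)} - ku)$ for $k\in\Z$, once one particular solution is fixed; and similarly for the other equation with the roles of $d$ and $d-u$ as the ``steps''. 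Thus two distinct nonnegative solutions would differ by at least $d$ in the first coordinate and at least $u$ in the second.

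\emph{First step.} I would suppose, for contradiction, that there are two distinct solutions $(\alpha_1,\alpha_2)$ and $(\alpha_1',\alpha_2')$ in $\N^2$ of $u\alpha_1 + d\alpha_2 = j$; WLOG $\alpha_1 < \alpha_1'$, hence $\alpha_2 > \alpha_2'$. By the structure of the solution set, $\alpha_1' \geq \alpha_1 + d \geq d$ and $\alpha_2 \geq \alpha_2' + u \geq u$. \emph{Second step.} Then $j = u\alpha_1 + d\alpha_2 \geq u\alpha_1' + d\alpha_2' $ on the one hand, but more usefully, picking the solution that maximizes $\alpha_1$ subject to $\alpha_2 \geq 0$ and the one that maximizes $\alpha_2$ subject to $\alpha_1 \geq 0$, one gets $j \geq u\cdot d + d\cdot 0 = ud$ is too crude; instead I would argue that since both $\alpha_1 \geq 0$ in the $\alpha_2$-maximal solution and $\alpha_2' \geq 0$ in the $\alpha_1$-maximal solution, one has $\alpha_1' \geq d$ and $\alpha_2 \geq u$, so
\[
j = u\alpha_1' + d\alpha_2' \geq ud \quad\text{is not quite it;}
\]
rather $j = u\alpha_1 + d\alpha_2 \geq u\cdot 0 + d\cdot u = du$ — hmm, that gives $j \geq du > (d-1)(u-1)$ only when $du > (d-1)(u-1) = du - d - u + 1$, i.e. $d + u > 1$, which always holds. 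So the contradiction is immediate: having two nonnegative solutions forces $j \geq du$, whereas $du > (d-1)(u-1) = du - (d+u-1)$ since $d + u - 1 > 0$. \emph{Wait} — I need $\alpha_1 \geq 0$ and $\alpha_2 \geq u$ simultaneously in \emph{one} solution. That is exactly the solution with smaller $\alpha_1$: it has $\alpha_1 \geq 0$ and, since $\alpha_2 \geq \alpha_2' + u \geq u$, also $\alpha_2 \geq u$. So $j = u\alpha_1 + d\alpha_2 \geq d\cdot u$, giving the contradiction. The same argument, with $u$ replaced by $d-u$ and $d$ playing its role, handles $d\alpha_0 + (d-u)\alpha_1 = j$: two solutions force $j \geq d(d-u) > (d-1)(d-u-1)$.

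\emph{The main obstacle.} There is no serious obstacle here — the result is an elementary fact about numerical semigroups, essentially the statement that the largest ``gap'' is the Frobenius number $ud - u - d$ and that every integer in $[0, (d-1)(u-1))$ has at most one representation (a sharper version of the Chicken McNugget / Sylvester–Frobenius picture, and a standard fact: if $j \le (a-1)(b-1)-1$ then $j$ or $ab-a-b-j$ is representable but not both, and representations are unique in that range). The only thing requiring a line of care is making sure the off-by-one constants in the bounds $(d-1)(u-1)$ and $(d-1)(d-u-1)$ are the ones that make the strict inequality $j \geq du > (d-1)(u-1)$ (resp. $j \ge d(d-u) > (d-1)(d-u-1)$) go through, which it does since $du - (d-1)(u-1) = d + u - 1 \geq 1$ and $d(d-u) - (d-1)(d-u-1) = d + (d-u) - 1 \geq 1$. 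I would write this up in a few lines, invoking $\gcd(u,d)=1$ for the description of the solution lattice and nothing else.
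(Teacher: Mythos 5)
Your proof is correct and takes essentially the same route as the paper's: both arguments use that $\gcd(u,d)=1$ to write the difference of two nonnegative solutions as a nonzero integer multiple of $(d,-u)$, forcing one coordinate of one of the solutions to be at least $d$ (or at least $u$), whence $j\geq du>(d-1)(u-1)$ (and analogously $j\geq d(d-u)>(d-1)(d-u-1)$ in the second case). The write-up could be tightened, since the middle of the argument contains several abandoned attempts before landing on the correct inequality, but the final reasoning is sound and matches the paper's.
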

\begin{proof}
Suppose
that both $\ualpha$ and $\ualpha'$ satisfy these conditions, then we must have
$$(\alpha_1-\alpha'_1,\alpha_2-\alpha'_2)=\kappa(d,-u),\ \ \kappa\in\Z.$$
If $\kappa>0,$ this implies $\alpha_1=\alpha'_1+\kappa\,d\geq d,$  but then we have $$u\alpha_1+d\alpha_2\geq u\,d>(d-1)(u-1)\geq j,$$ a contradiction. If $\kappa<0,$ then we get
$\alpha_2\geq u,$ and arrive to a contradiction as above. The other inequality follows analogously.
\end{proof}
\smallskip
\begin{proposition}\label{key}
For $\ell\in\N,$ let $E_\ell(\X)\in S$ be a homogeneous form of degree $\ell$  defining a curve $\cE_\ell\subset\P^2_\K.$ Write
 $$E_\ell(\X)=\sum_{|\ualpha|=\ell}e_\ualpha\X^{\ualpha}.$$
Then, $\cE_\ell$ is adjoint to $\cC_{u,d}$ if and only if
$e_\ualpha=0$ for all $\ualpha=(\alpha_0,\alpha_1,\alpha_2)\in\N^3$ such that either
$u\alpha_1+d\alpha_2<(d-1)(u-1)$ or $d\alpha_0+(d-u)\alpha_1<(d-1)(d-u-1)$.
\end{proposition}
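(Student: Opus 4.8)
The plan is to reduce the geometric adjunction condition \eqref{isi} to a divisibility statement for the pullback of $E_\ell$ along $\varphi$, and then extract the vanishing of coefficients from the elementary arithmetic of Lemma~\ref{diof}. Since the parametrization in \eqref{varphi} is proper and $\P^1_\K$ is smooth, $\varphi$ is the normalization of $\cC_{u,d}$, and by Lemma~\ref{sing} the only singular points are $\bp_0$ and $\bp_\infty$, both unibranch, with $\varphi^{-1}(\bp_0)=(1:0)$ and $\varphi^{-1}(\bp_\infty)=(0:1)$. In the affine chart $X_0=1$ around $\bp_0$ the defining polynomial $X_0^{d-u}X_2^u-X_1^d$ becomes $X_2^u-X_1^d$, the monomial singularity parametrized locally by $T_1\mapsto(T_1^u,T_1^d)$; its value semigroup is $\langle u,d\rangle$, so its conductor is $c_0=(u-1)(d-1)$ (one more than the Frobenius number $ud-u-d$). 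Symmetrically, in the chart $X_2=1$ around $\bp_\infty$ the curve is $X_0^{d-u}-X_1^d$, locally $T_0\mapsto(T_0^d,T_0^{d-u})$, with conductor $c_\infty=(d-u-1)(d-1)$; note as a check that $\tfrac{1}{2}(c_0+c_\infty)=\binom{d-1}{2}$ equals the genus drop of a degree-$d$ plane curve normalized by $\P^1$. The classical description of adjoints through the conductor ideal (see \cite[Sections 4.1 and 4.8]{cas00}) then gives: $\cE_\ell$ is adjoint to $\cC_{u,d}$ if and only if $\mathrm{div}(\varphi^*E_\ell)$ on $\P^1_\K$ dominates the conductor divisor $c_0\,(1:0)+c_\infty\,(0:1)$, equivalently, if and only if $\varphi^*E_\ell$ is divisible by $T_0^{c_\infty}T_1^{c_0}$.

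Next I would make the pullback explicit. Substituting $X_0=T_0^d,\ X_1=T_0^{d-u}T_1^u,\ X_2=T_1^d$ as in \eqref{presentacion},
$$\varphi^*E_\ell=\sum_{|\ualpha|=\ell}e_\ualpha\,T_0^{\,d\alpha_0+(d-u)\alpha_1}\,T_1^{\,u\alpha_1+d\alpha_2},$$
a binary form of degree $d\ell$, since $(d\alpha_0+(d-u)\alpha_1)+(u\alpha_1+d\alpha_2)=d|\ualpha|=d\ell$; in particular a monomial occurring in this sum is determined by either of its two exponents. For the ``if'' direction, assume $e_\ualpha=0$ whenever $u\alpha_1+d\alpha_2<c_0$ or $d\alpha_0+(d-u)\alpha_1<c_\infty$. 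Then every $\ualpha$ with $e_\ualpha\neq0$ has $u\alpha_1+d\alpha_2\geq c_0$ and $d\alpha_0+(d-u)\alpha_1\geq c_\infty$, so each surviving monomial of $\varphi^*E_\ell$ is divisible by $T_1^{c_0}$ and by $T_0^{c_\infty}$; hence $T_0^{c_\infty}T_1^{c_0}\mid\varphi^*E_\ell$ and $\cE_\ell$ is adjoint.

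For the converse the only real point is that no cancellation can occur in the low-degree range. Suppose $\cE_\ell$ is adjoint but $e_\ualpha\neq0$ for some $\ualpha$ with, say, $j:=u\alpha_1+d\alpha_2<c_0=(d-1)(u-1)$. Because $j\leq(d-1)(u-1)$, Lemma~\ref{diof} shows that $\ualpha$ is the unique weight-$\ell$ index with $u\alpha_1+d\alpha_2=j$ (the triple is pinned down by $(\alpha_1,\alpha_2)$, as $\alpha_0=\ell-\alpha_1-\alpha_2$). Since a monomial of $\varphi^*E_\ell$ is determined by its $T_1$-exponent, the coefficient of $T_0^{\,d\ell-j}T_1^{\,j}$ in $\varphi^*E_\ell$ equals $e_\ualpha\neq0$, contradicting $T_1^{c_0}\mid\varphi^*E_\ell$. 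The case $d\alpha_0+(d-u)\alpha_1<c_\infty=(d-1)(d-u-1)$ is entirely analogous: now one uses the second clause of Lemma~\ref{diof} together with the fact that a monomial of $\varphi^*E_\ell$ is also determined by its $T_0$-exponent, to contradict $T_0^{c_\infty}\mid\varphi^*E_\ell$. This completes the argument.

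I expect the delicate step to be the first one: justifying that the adjunction condition \eqref{isi}, stated in terms of possibly virtual infinitely near points, is equivalent to $T_0^{c_\infty}T_1^{c_0}\mid\varphi^*E_\ell$. This amounts to identifying the conductor divisor of $\cC_{u,d}$ on its normalization with $c_0\,(1:0)+c_\infty\,(0:1)$, which rests on the explicit resolution of the two monomial singularities and the classical identification of the adjoint ideal with the conductor ideal; once this is in place, the remainder is the routine no-cancellation bookkeeping enabled by Lemma~\ref{diof}.
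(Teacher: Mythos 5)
Your proof is correct and reaches the same local order conditions at the two singular points, and then uses Lemma~\ref{diof} in the same way the paper does to rule out cancellation; the difference lies in how you derive the thresholds $(d-1)(u-1)$ and $(d-1)(d-u-1)$. The paper picks the external point $(0:1:0)$, computes the polar curve $\frac{\partial F_{q+2,0}}{\partial X_1}=\pm dX_1^{d-1}$, and applies Casas-Alvero's Theorem~6.3.1 to turn the adjunction condition into the two inequalities on branch intersection multiplicities; you instead identify the adjunction condition with divisibility of $\varphi^*E_\ell$ by the conductor divisor $T_0^{c_\infty}T_1^{c_0}$ on the normalization, and compute $c_0=(u-1)(d-1)$ and $c_\infty=(d-u-1)(d-1)$ directly as conductors of the numerical semigroups $\langle u,d\rangle$ and $\langle d-u,d\rangle$ of the two monomial branches. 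The conductor route has the advantage of being self-contained (the thresholds appear as Frobenius-number-plus-one rather than as outputs of a polar computation), and your sanity check $\frac12(c_0+c_\infty)=\binom{d-1}{2}$ is a nice confirmation; the paper's route has the advantage of resting on a single citable theorem of \cite{cas00} rather than on the separate but equally classical identification of adjoints with the conductor ideal. Once the thresholds are in hand, the pullback computation $\varphi^*E_\ell=\sum e_\ualpha T_0^{d\alpha_0+(d-u)\alpha_1}T_1^{u\alpha_1+d\alpha_2}$ and the appeal to Lemma~\ref{diof} to show each low-order $T_1$- or $T_0$-exponent is hit by at most one $\ualpha$ are identical in both arguments, so the two proofs coincide from that point on.
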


\begin{proof}
We will use the characterization of adjoints given in Theorem $6.3.1$ in \cite{cas00} (see also \cite[Proposition 4.6]{bus09}) for the case of $\cC_{u,d}.$ We start by choosing the point $(0:1:0)\notin\cC_{u,d},$ and easily see that the {\em polar curve} with respect to this point is defined by $\frac{\partial F{q+2,0}(\T,\X)}{\partial X_1}=\pm dX_1^{d-1}.$ By inspecting the  parametrization $\varphi$ of $\cC_{u,d}$ given in \eqref{varphi}, we deduce straithgforwardly that for each $i\in\{0,\infty\},$ there is only one (irreducible) branch $\gamma_i(t)$ of the curve passing through $\bp_i.$ Computing these branches explicitly from $\varphi,$ we get that one representation of them can be the following:
\begin{itemize}
\item $\gamma_0(t)=(1:t^u:t^d),$
\item $\gamma_\infty(t)=(t^d:t^{d-u}:1).$
\end{itemize}
These branches are irreducible, as  $\gcd(u,d)=\gcd(d,d-u)=1.$
\par We apply now \cite[Theorem 6.3.1]{cas00} to this situation, and get that $\cE_\ell$ is adjoint to $\cC_{u,d}$ if and only if
\begin{equation}\label{cococo}
\begin{array}{lcl}
\mbox{m}_{\bp_0}\big(\gamma_0,\cE_\ell\big)&\geq& (d-1)(u-1)\\
\mbox{m}_{\bp_\infty}\big(\gamma_\infty,\cE_\ell\big)&\geq&(d-1)(d-u-1),
\end{array}\end{equation}
where $\mbox{m}_{\bp_i}(\gamma_i,\cE_\ell)$ denotes the local multiplicity of the branch $\gamma_i$ at the point $\bp_i$ with respect to $\cE_\ell.$ To compute these local multiplicities, we set $\X\mapsto\gamma_i(t)$ in $E_\ell(\X)$, and get
\begin{itemize}
\item For $\bp_0:\ E_\ell\big(\gamma_0(t)\big)=\sum_{|\ualpha|=\ell}e_\ualpha t^{u\alpha_1+d\alpha_2},$
\item for $\bp_\infty:\ E_\ell\big(\gamma_\infty(t)\big)=\sum_{|\ualpha|=\ell}e_\ualpha t^{d\alpha_0+(d-u)\alpha_1}.$
\end{itemize}
so, we must have
\begin{equation}\label{arroc}
\begin{array}{lr}
\min_j \{u\alpha_1+d\alpha_2=j,\,\sum_{|\ualpha|=j}e_\ualpha\neq0\}\geq (d-1)(u-1),& \mbox{and} \\
\min_j \{d\alpha_0+(d-u)\alpha_1=j,\,\sum_{|\ualpha|=j}e_\ualpha\neq0\}\geq (d-1)(d-u-1).&
\end{array}
\end{equation}

\par The claim follows straightforwardly from Lemma \ref{diof} combined with \eqref{arroc}.
\end{proof}
\smallskip
For rational plane curves of degree $d,$ it is well-known (see for instance \cite{SWP08}) that there are no adjoint curves of degree less than $d-2$. We can recover this result for the monomial curve from Proposition \ref{key} above.
\begin{corollary}
If $\ell<d-2,$ then $\mbox{\em Adj}_\ell(\cC_{u,d})=0.$
\end{corollary}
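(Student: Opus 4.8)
The plan is to reduce the corollary to an elementary arithmetic inequality via Proposition \ref{key}. First I would note that $\mbox{Adj}_\ell(\cC_{u,d})$ is spanned by pencils $T_0C^0_\ell(\X)+T_1C^1_\ell(\X)$ whose two coefficient forms $C^0_\ell,C^1_\ell$ each define a curve adjoint to $\cC_{u,d}$; hence $\mbox{Adj}_\ell(\cC_{u,d})=0$ as soon as there is no nonzero homogeneous form $E_\ell(\X)\in S$ of degree $\ell$ whose zero locus is adjoint to $\cC_{u,d}$. So it suffices to prove that no such $E_\ell$ exists when $\ell<d-2$.

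Writing $E_\ell(\X)=\sum_{|\ualpha|=\ell}e_\ualpha\X^\ualpha$, Proposition \ref{key} tells us that adjointness forces $e_\ualpha=0$ for every $\ualpha=(\alpha_0,\alpha_1,\alpha_2)\in\N^3$ with $|\ualpha|=\ell$ such that $u\alpha_1+d\alpha_2<(d-1)(u-1)$ or $d\alpha_0+(d-u)\alpha_1<(d-1)(d-u-1)$. The corollary will therefore follow once I show that, when $\ell<d-2$, \emph{every} $\ualpha\in\N^3$ with $|\ualpha|=\ell$ satisfies at least one of those two strict inequalities: indeed, this makes all coefficients $e_\ualpha$ vanish, so $E_\ell\equiv0$.

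To prove this last claim I would argue by contradiction. If some $\ualpha$ with $|\ualpha|=\ell$ violated both inequalities, then adding $u\alpha_1+d\alpha_2\geq(d-1)(u-1)$ and $d\alpha_0+(d-u)\alpha_1\geq(d-1)(d-u-1)$ yields $d(\alpha_0+\alpha_1+\alpha_2)\geq(d-1)(d-2)$, that is $d\ell\geq(d-1)(d-2)$, so $\ell\geq d-3+\tfrac{2}{d}>d-3$ and hence $\ell\geq d-2$ since $\ell$ and $d-3$ are integers. This contradicts $\ell<d-2$. There is no real obstacle in this argument; the only mildly delicate points are the reduction from the nonvanishing of $\mbox{Adj}_\ell(\cC_{u,d})$ to the existence of a single adjoint form (immediate from the definition of a pencil of adjoints) and the integrality step $\ell>d-3\Rightarrow\ell\geq d-2$.
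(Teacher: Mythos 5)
Your proof is correct and follows the same route as the paper: apply Proposition \ref{key} to see that any nonzero adjoint form would need a monomial $\X^\ualpha$ satisfying both lower bounds simultaneously, then sum those two inequalities to get $d\ell\geq(d-1)(d-2)$ and conclude $\ell\geq d-2$ by integrality. You are somewhat more explicit than the paper in spelling out the reduction from pencils to single adjoint forms and the integrality step $\ell>d-3\Rightarrow\ell\geq d-2$, but the argument is identical in substance.
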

\begin{proof}
A nontrivial adjoint $\cE_\ell$ to $\cC_{u,d}$ will have a polynomial defining it, which we denote by $E_\ell$. Computing its Taylor expansion, we get  $E_\ell=\sum_{|\ualpha|=\ell}e_{\ell}\X^\ualpha,$ with some $e_\ualpha\neq0.$ By Proposition \ref{key}, this can happen if and only if
$$u\alpha_1+d\alpha_2\geq(d-1)(u-1)\ \mbox{and} \ d\alpha_0+(d-u)\alpha_1\geq (d-1)(d-u-1).
$$
Adding these two inequalities, we get
$$d\,\ell=d(\alpha_0+\alpha_1+\alpha_2)\geq (d-1)(d-2),
$$
which implies $\ell\geq\frac{(d-1)(d-2)}{d}.$ From here, we get straightforwardly that $\ell\geq d-2.$
\end{proof}
\smallskip
The following classic properties will also be useful in the sequel. They were already known by Sylvester (see \cite{syl84}).
\begin{lemma}\label{aixx}
Let $a,\,b$ be positive integers such that $\gcd(a,b)=1.$ Then
\begin{enumerate}
\item[i)] The number of $j\in\N$ such that the positive Diophantine equation
\begin{equation}\label{pos}
a\cdot x+b\cdot y=j,\ \ \mbox{ with}, x,\,y\in\N,
\end{equation} has no  solution is equal to $(a-1)(b-1)/2.$
\item[ii)] If $j\geq (a-1)(b-1),$ then the positive Diophantine equation \eqref{pos} has always a solution.
\end{enumerate}
\end{lemma}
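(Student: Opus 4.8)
The plan is to establish part (ii) by a direct normalization of integer solutions, and then to deduce part (i) from the classical \emph{symmetry} of the set of representable integers about the number $ab-a-b$ (the Frobenius number of $a$ and $b$).

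For (ii): since $\gcd(a,b)=1$ the equation $a\cdot x+b\cdot y=j$ has integer solutions, and any two of them differ by an integer multiple of $(b,-a)$, so there is a unique solution $(x,y)\in\Z^2$ with $0\le x\le b-1$. For this solution $b\cdot y=j-a\cdot x\ge j-a(b-1)$, and the hypothesis $j\ge(a-1)(b-1)=ab-a-b+1$ gives $b\cdot y\ge 1-b$; since $b\ge 1$ this forces $y$ to be an integer with $y>-1$, i.e.\ $y\ge 0$. Hence \eqref{pos} is solvable, and in particular every integer strictly larger than $ab-a-b$ is representable as $a\cdot x+b\cdot y$ with $x,y\in\N$.

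For (i) I will show that for each integer $n$ with $0\le n\le ab-a-b$, \emph{exactly one} of $n$ and $ab-a-b-n$ is representable. They cannot both be: if $n=ax+by$ and $ab-a-b-n=ax'+by'$ with $x,y,x',y'\in\N$, then adding gives $a(x+x'+1)+b(y+y'+1)=ab$; reducing modulo $a$ and modulo $b$ and using $\gcd(a,b)=1$ forces $a\mid y+y'+1$ and $b\mid x+x'+1$, hence $y+y'\ge a-1$ and $x+x'\ge b-1$, so $a(x+x')+b(y+y')\ge 2ab-a-b$, contradicting $a(x+x')+b(y+y')=ab-a-b$ since $ab>0$. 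Conversely at least one of the two is representable: take the integer solution $(x,y)$ of $ax+by=n$ with $0\le x\le b-1$; if $y\ge 0$ then $n$ is representable, and if $y\le -1$ then the identity $ab-a-b-n=a(b-1-x)+b(-1-y)$ exhibits $ab-a-b-n$ as a nonnegative combination. A parity check (not both $a,b$ are even, whence $ab-a-b\equiv ab+a+b\equiv 1\pmod 2$) shows $ab-a-b$ is odd, so $n\mapsto ab-a-b-n$ is a fixed-point-free involution of $\{0,1,\dots,ab-a-b\}$; it therefore partitions these $ab-a-b+1=(a-1)(b-1)$ integers into $(a-1)(b-1)/2$ pairs, each containing exactly one non-representable integer. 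Combined with (ii), which guarantees no non-representable integers beyond $ab-a-b$, this yields that the number of $j\in\N$ for which \eqref{pos} has no solution is exactly $(a-1)(b-1)/2$. The only points that require any care are the modular divisibility step in the ``not both representable'' direction and the bookkeeping of the pairing together with the parity observation that kills fixed points; everything else is routine, so I do not expect a serious obstacle.
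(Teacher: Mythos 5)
The paper does not actually prove this lemma: it simply attributes both statements to Sylvester and cites \cite{syl84}. Your argument is therefore, by default, a "different route," and it is correct. It is the standard self-contained proof via the symmetry of the numerical semigroup $\langle a,b\rangle$ about the Frobenius number $ab-a-b$. Part (ii) is handled cleanly by normalizing the integer solution to $0\le x\le b-1$ and extracting $y>-1$. Part (i) is the classical pairing argument: the ``not both representable'' direction via the modular divisibility (adding the two representations and reducing mod $a$ and mod $b$), the ``at least one representable'' direction via $ab-a-b-n=a(b-1-x)+b(-1-y)$, and the parity observation that $ab-a-b$ is odd (so the involution has no fixed point, making the $(a-1)(b-1)$ integers in $\{0,\dots,ab-a-b\}$ split into exactly $(a-1)(b-1)/2$ pairs). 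Each step you flagged as needing care does go through, and the degenerate case $a=1$ or $b=1$ (where the interval $\{0,\dots,ab-a-b\}$ is empty) is also consistent with the formula. What your approach buys over a bare citation is, of course, a complete and elementary verification; the only cosmetic remark is that you could note explicitly that part (ii) is subsumed in the observation (used implicitly at the end of your (i)) that every integer $>ab-a-b$ is representable.
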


\begin{lemma}\label{yek}
For $\ell\geq d-2,$ the cardinality of the set of $\ualpha=(\alpha_0,\alpha_1,\alpha_2)\in\N^3$ with $\alpha_0+\alpha_1+\alpha_2=\ell,$ such that either
$u\alpha_1+d\alpha_2<(d-1)(u-1),$ or $d\alpha_0+(d-u)\alpha_1<(d-1)(d-u-1),$ is equal to $\frac{(d-1)(d-2)}2.$
\end{lemma}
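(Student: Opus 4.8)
The plan is to set
$A=\{\ualpha\in\N^3:|\ualpha|=\ell,\ u\alpha_1+d\alpha_2<(d-1)(u-1)\}$ and
$B=\{\ualpha\in\N^3:|\ualpha|=\ell,\ d\alpha_0+(d-u)\alpha_1<(d-1)(d-u-1)\}$, so that the set in the statement is $A\cup B$, and to compute $|A\cup B|=|A|+|B|-|A\cap B|$ by treating the three terms in turn.

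First I would dispose of the intersection. If $\ualpha\in A\cap B$, adding the two defining inequalities and using $u\alpha_1+(d-u)\alpha_1=d\alpha_1$ gives $d\,\ell=d(\alpha_0+\alpha_1+\alpha_2)<(d-1)(u-1)+(d-1)(d-u-1)=(d-1)(d-2)$, whence $\ell<(d-1)(d-2)/d<d-2$, contradicting $\ell\geq d-2$. So $A\cap B=\emptyset$ for all $\ell\geq d-2$.

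Next, $|A|$. On the slice $|\ualpha|=\ell$ the coordinate $\alpha_0$ is determined by $(\alpha_1,\alpha_2)$, so it suffices to count the pairs $(\alpha_1,\alpha_2)\in\N^2$ with $\alpha_1+\alpha_2\leq\ell$ and $u\alpha_1+d\alpha_2<(d-1)(u-1)$. I would first note that the side condition $\alpha_1+\alpha_2\leq\ell$ is automatic: since $u<d$ we have $u(\alpha_1+\alpha_2)\leq u\alpha_1+d\alpha_2<(d-1)(u-1)$, so $\alpha_1+\alpha_2<(d-1)(u-1)/u<d-1$, hence $\alpha_1+\alpha_2\leq d-2\leq\ell$. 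Thus $|A|$ is just the number of $(\alpha_1,\alpha_2)\in\N^2$ with $j:=u\alpha_1+d\alpha_2\in\{0,1,\ldots,(d-1)(u-1)-1\}$. Grouping by the value of $j$: Lemma \ref{diof} says each such $j$ has at most one representation, while Lemma \ref{aixx}(i)--(ii) says that exactly $(d-1)(u-1)/2$ non-negative integers fail to be representable as $ux+dy$ with $x,y\in\N$, and all of them lie below $(d-1)(u-1)$. Hence, of the $(d-1)(u-1)$ values in $\{0,\ldots,(d-1)(u-1)-1\}$, exactly half are attained, each exactly once, giving $|A|=(d-1)(u-1)/2$.

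The count of $|B|$ runs mutatis mutandis with the pair $(d,d-u)$ in place of $(u,d)$: here $\alpha_2=\ell-\alpha_0-\alpha_1$ is the determined coordinate, the bound $(d-u)(\alpha_0+\alpha_1)\leq d\alpha_0+(d-u)\alpha_1<(d-1)(d-u-1)$ forces $\alpha_0+\alpha_1\leq d-2\leq\ell$, and $\gcd(d,d-u)=\gcd(d,u)=1$ lets Lemmas \ref{diof} and \ref{aixx} apply, yielding $|B|=(d-1)(d-u-1)/2$. Adding, $|A\cup B|=(d-1)(u-1)/2+(d-1)(d-u-1)/2=(d-1)(d-2)/2$, as claimed. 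The only step that needs care is the pair of ``automatic'' bounds $\alpha_1+\alpha_2\leq d-2$ and $\alpha_0+\alpha_1\leq d-2$, which guarantee that every admissible pair genuinely comes from a triple with $|\ualpha|=\ell$ so that nothing is lost in the reduction; everything else is a bookkeeping combination of the Sylvester-type count of Lemma \ref{aixx} with the uniqueness statement of Lemma \ref{diof}.
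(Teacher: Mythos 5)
Your proof is correct and follows essentially the same route as the paper's: disjointness of the two conditions by adding the inequalities, followed by the Sylvester count (Lemma \ref{aixx}) combined with the uniqueness statement (Lemma \ref{diof}). The one place where you are slightly more careful than the paper is in explicitly checking that the side condition $\alpha_1+\alpha_2\le\ell$ (resp.\ $\alpha_0+\alpha_1\le\ell$) is automatic for $\ell\ge d-2$, so that no admissible pair is lost in passing from triples to pairs; the paper leaves this implicit.
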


\begin{proof}
First, let us show that if $\ell\geq d-2,$ then the two conditions
$$u\alpha_1+d\alpha_2<(d-1)(u-1) \ \mbox{ and}\ \  d\alpha_0+(d-u)\alpha_1<(d-1)(d-u-1),$$ cannot happen at the same time. Indeed, if this were the case, by adding the two inequalities we would have
$d\ell< (d-1)(d-2),$ which implies $\ell<d-2,$ a contradiction.
\par
By Lemma \ref{aixx}, the number of  $j$'s such that $u\alpha_1+d\alpha_2=j$ has at least a nonegative solution (resp. $d\alpha_0+(d-u)\alpha_1=j,$ with $j<(d-1)(u-1)$ (resp. $j<(d-1)(d-u-1)$) is equal to $\frac{(d-1)(u-1)}2$ (resp. $\frac{(d-1)(d-u-1)}{2}$). By Lemma \ref{diof}, we actually have that if there is a nonegative solution of the aforementioned diophantine equation, then it is unique. This implies that the cardinality we want to compute is equal to
$$\frac{(d-1)(u-1)}2+\frac{(d-1)(d-u-1)}2=\frac{(d-1)(d-2)}2.
$$
\end{proof}
\smallskip
With all these preliminary results, we can compute the dimension of the space of pencils of adjoints.
\begin{proposition}
For $\ell\geq d-2,$
$$\dim_\K\left(\mbox{\em Adj}_\ell(\cC_{u,d})\right)= (\ell+2)(\ell+1)-(d-1)(d-2).$$
\end{proposition}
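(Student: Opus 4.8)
The plan is to reduce the statement to a monomial count, leaning entirely on Proposition \ref{key} and Lemma \ref{yek}. First I would observe that every bihomogeneous form of $\S$ of bidegree $(1,\ell)$ is uniquely of the shape $T_0C^0_\ell(\X)+T_1C^1_\ell(\X)$ with $C^0_\ell(\X),C^1_\ell(\X)$ homogeneous of degree $\ell$ in the variables $X_0,X_1,X_2$; equivalently, the space of such forms is $\K[\X]_\ell\oplus\K[\X]_\ell$ via $(A,B)\mapsto T_0A+T_1B$, where $\K[\X]_\ell$ denotes the degree-$\ell$ homogeneous component. Let $V_\ell\subseteq\K[\X]_\ell$ be the subset consisting of $0$ together with all degree-$\ell$ forms defining a curve adjoint to $\cC_{u,d}$. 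By Proposition \ref{key}, $V_\ell$ is exactly the $\K$-span of those monomials $\X^\ualpha$ with $|\ualpha|=\ell$ for which \emph{neither} $u\alpha_1+d\alpha_2<(d-1)(u-1)$ \emph{nor} $d\alpha_0+(d-u)\alpha_1<(d-1)(d-u-1)$ holds; in particular $V_\ell$ is a linear subspace of $\K[\X]_\ell$ with a distinguished monomial basis. Since a pencil of adjoints is a form $T_0C^0_\ell+T_1C^1_\ell$ with both $C^0_\ell,C^1_\ell\in V_\ell$, the identification above restricts to an isomorphism $\mbox{Adj}_\ell(\cC_{u,d})\cong V_\ell\oplus V_\ell$ of $\K$-vector spaces, hence $\dim_\K\mbox{Adj}_\ell(\cC_{u,d})=2\dim_\K V_\ell$.

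Next I would compute $\dim_\K V_\ell$ by counting monomials. The ambient space $\K[\X]_\ell$ has dimension $\binom{\ell+2}{2}=\frac{(\ell+2)(\ell+1)}{2}$, with basis the monomials $\X^\ualpha$, $\ualpha\in\N^3$, $|\ualpha|=\ell$. By the monomial description of $V_\ell$ just obtained, $\dim_\K V_\ell$ equals that number minus the number of $\ualpha\in\N^3$ with $|\ualpha|=\ell$ for which $u\alpha_1+d\alpha_2<(d-1)(u-1)$ or $d\alpha_0+(d-u)\alpha_1<(d-1)(d-u-1)$. Because $\ell\geq d-2$, Lemma \ref{yek} says this last count is $\frac{(d-1)(d-2)}{2}$, so $\dim_\K V_\ell=\frac{(\ell+2)(\ell+1)}{2}-\frac{(d-1)(d-2)}{2}$. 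Combining the two steps yields
\[
\dim_\K\mbox{Adj}_\ell(\cC_{u,d})=2\dim_\K V_\ell=(\ell+2)(\ell+1)-(d-1)(d-2),
\]
which is the asserted formula.

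There is essentially no hard step here: the substance is already carried by Proposition \ref{key}, which converts the geometric adjointness condition into the vanishing of explicit coefficients, and by Lemma \ref{yek}, which counts the forbidden monomials. The two points I would be careful about are, first, that adjointness is cut out by \emph{linear} conditions on the coefficients, so that $V_\ell$ is genuinely a subspace and $\mbox{Adj}_\ell(\cC_{u,d})$ splits as $V_\ell\oplus V_\ell$ rather than being a more complicated incidence locus; and second, that the hypothesis $\ell\geq d-2$ is precisely what forces the two ``defect'' regions appearing in Lemma \ref{yek} to be disjoint, so that their cardinalities add to $\frac{(d-1)(d-2)}{2}$ with no inclusion--exclusion correction.
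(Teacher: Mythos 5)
Your proof is correct and follows essentially the same route as the paper: use Proposition \ref{key} to identify the adjoint forms of degree $\ell$ with the span of the permitted monomials, note that a pencil of adjoints decomposes as two independent copies of that space via $T_0 C^0_\ell + T_1 C^1_\ell$, and then count the forbidden monomials with Lemma \ref{yek}. You spell out the linearity and the direct-sum decomposition more explicitly, but the substance is identical to the paper's argument.
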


\begin{proof}
By Proposition \ref{key}, we have that  the dimension of $\mbox{Adj}_\ell(\cC_{u,d})$ is equal to twice the cardinality of those $\ualpha=(\alpha_0,\alpha_1,\alpha_2)\in\N^3$ such that $\alpha_0+\alpha_1+\alpha_2=\ell,\,u\alpha_1+d\alpha_2\geq (d-1)(u-1),$ and $d\alpha_0+(d-u)\alpha_1\geq (d-1)(d-u-1)$. This cardinality, thanks to Lemma \ref{yek}, is equal to
$${\ell+2\choose 2}-\frac{(d-1)(d-2)}2,
$$
so by multiplying by two this number, the claim holds.
\end{proof}
\smallskip
Now we turn to the computation of pieces of $\mbox{ker}(\Phi_0)_{(1,*)},$ as we want to show that the elements of higher $\X$-degree in this space cannot be generated by pencils of adjoints of $\cC_{u,d}.$ Recall that if $a<b,$ we set ${a\choose b}=0.$
\begin{lemma}\label{ddim}
$\mbox{ker}(\Phi_0)_{(1,*)}$ is a free $\K[\X]$-module, with basis $\{F_{q,1}(\T,X)\,\,F_{q+1,1}(\T,X)\}.$ For $\ell\in\N,$ we have that
$$\dim_{\K}\left(\mbox{\rm ker}(\Phi_0)_{(1,\ell)}\right)= {\ell-|\sigma_q-\tau_q|+2\choose 2}+{\ell-|\sigma_{q+1}-\tau_{q+1}|+2\choose 2}.
$$
\end{lemma}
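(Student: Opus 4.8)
The plan is to establish freeness with the asserted basis and then read off the Hilbert function. The starting point is that $F_{q,1}(\T,\X)$ and $F_{q+1,1}(\T,\X)$ are the only two elements of $\cF_0$ of $\T$-degree one: by Proposition~\ref{connection}(i) one has $b_q=b_{q+1}=1$ (indeed $a_{p-1}=1$ gives $b_{m_{p-1}}=1$, and $b_q=a_{p-2}-(q_{p-1}-1)a_{p-1}=q_{p-1}-(q_{p-1}-1)=1$), while $b_{q+2}=0$ and $b_n\ge 2$ for $n\le q-1$ since $\{b_n\}$ is decreasing and $b_{q-1}=2$. By \eqref{fgen} each of these two polynomials is a binomial $T_0P-T_1Q$ with $P,Q$ coprime monomials in $\X$ of total degree $|\sigma_q-\tau_q|$, resp.\ $|\sigma_{q+1}-\tau_{q+1}|$, which are also the $\X$-degrees recorded by $\bdeg(F_{q,1})$ and $\bdeg(F_{q+1,1})$.

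\emph{Generation.} Let $f\in\mbox{ker}(\Phi_0)_{(1,\ell)}$. Since $\cF_0$ generates $\mbox{ker}(\Phi_0)$ (Theorem~\ref{mgenerators}) and $f$ is bihomogeneous, we may write $f=\sum_n G_n(\T,\X)\,F_{n,b_n}(\T,\X)$ with each nonzero $G_n$ bihomogeneous of bidegree $(1,\ell)-(b_n,|\sigma_n-\tau_n|)$. Nonnegativity of the first component forces $b_n\le 1$, so only $n\in\{q,q+1,q+2\}$ occur; moreover $G_q,G_{q+1}\in\K[\X]$, while $G_{q+2}=T_0a(\X)+T_1b(\X)$ for suitable $a,b\in\K[\X]$. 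It then remains to absorb the term $G_{q+2}F_{q+2,0}$. For this I will use the syzygies $\bs_{q,q+1}$ and $\bs_{q+1,q+2}$ computed in Lemmas~\ref{szg} and~\ref{extra}: reading off their explicit shapes (splitting according to the sign of $\sigma_{q+1}$, which is opposite to the sign of $\sigma_q$ by \eqref{t1t}) yields identities $T_0F_{q+2,0}=\alpha F_{q,1}+\beta F_{q+1,1}$ and $T_1F_{q+2,0}=\gamma F_{q,1}+\delta F_{q+1,1}$ with $\alpha,\beta,\gamma,\delta$ monomials in $\K[\X]$. Substituting $G_{q+2}=T_0a+T_1b$ rewrites $G_{q+2}F_{q+2,0}$, hence $f$, as a $\K[\X]$-combination of $F_{q,1}$ and $F_{q+1,1}$, so these two polynomials generate $\mbox{ker}(\Phi_0)_{(1,*)}$ over $\K[\X]$.

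\emph{Linear independence.} Suppose $A(\X)F_{q,1}+B(\X)F_{q+1,1}=0$ with $A,B\in\K[\X]$. Regarding $F_{q,1}=P_qT_0-Q_qT_1$ and $F_{q+1,1}=P_{q+1}T_0-Q_{q+1}T_1$ as linear forms in $T_0,T_1$ over the field $\K(\X)$, a nontrivial relation forces them to be $\K(\X)$-proportional, i.e.\ $P_qQ_{q+1}=P_{q+1}Q_q$. But $P_qQ_{q+1}$ and $P_{q+1}Q_q$ have the same image under $\Phi_0$ (because $F_{q,1},F_{q+1,1}\in\mbox{ker}(\Phi_0)$), so their difference lies in $\mbox{ker}(\Phi_0)\cap\K[\X]=\langle F_{q+2,0}\rangle$; were this difference zero, then $\gcd(P_q,Q_q)=1$ would give $P_q\mid P_{q+1}$, and since $\deg P_{q+1}=|\sigma_{q+1}-\tau_{q+1}|\le\frac{d}{2}\le|\sigma_q-\tau_q|=\deg P_q$ by Proposition~\ref{connection}(viii), we would get $P_q=P_{q+1}$, $Q_q=Q_{q+1}$, hence $F_{q,1}=F_{q+1,1}$, impossible since these are distinct members of $\cF_0$. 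Therefore $P_qQ_{q+1}\ne P_{q+1}Q_q$ and $A=B=0$, so $\{F_{q,1},F_{q+1,1}\}$ is indeed a free basis.

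\emph{Dimension.} Freeness, with the two generators in $\X$-degrees $|\sigma_q-\tau_q|$ and $|\sigma_{q+1}-\tau_{q+1}|$, gives an isomorphism of $\K$-vector spaces $\mbox{ker}(\Phi_0)_{(1,\ell)}\cong\K[\X]_{\ell-|\sigma_q-\tau_q|}\oplus\K[\X]_{\ell-|\sigma_{q+1}-\tau_{q+1}|}$; since $\dim_\K\K[\X]_m=\binom{m+2}{2}$ (which is $0$ for $m<0$ under the convention $\binom{a}{b}=0$ for $a<b$), the stated formula follows. I expect the genuinely delicate points to be the case-by-case check in the \emph{Generation} step that Lemmas~\ref{szg}--\ref{extra} do produce the two identities $T_iF_{q+2,0}\in\langle F_{q,1},F_{q+1,1}\rangle_{\K[\X]}$ for both sign configurations, together with the short argument $P_qQ_{q+1}\ne P_{q+1}Q_q$ in the \emph{Linear independence} step; the rest is routine bookkeeping.
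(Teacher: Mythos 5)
Your proof is correct and follows essentially the same route as the paper's: both reduce a bihomogeneous element of $\mbox{ker}(\Phi_0)_{(1,\ell)}$ to a $\K[\X]$-combination of $F_{q,1}$ and $F_{q+1,1}$ by absorbing $T_0F_{q+2,0}$ and $T_1F_{q+2,0}$ through the explicit syzygies, and both then read off the dimension from freeness. Your hands-on determinant/gcd argument for linear independence is a concrete elaboration of the paper's terser appeal to the irreducibility of $F_{q,1}$ and $F_{q+1,1}$ as members of a reduced Gr\"obner basis of a prime ideal (and the aside that the difference lies in $\langle F_{q+2,0}\rangle$ is not actually used, but is harmless).
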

\begin{proof}
By Theorem \ref{mgenerators}, we have that the piece of $\T$-degree one in $\mbox{ker}(\Phi_0)$ is generated by $F_{q,1}(\T,X)\,\,F_{q+1,1}(\T,X),$ and multiples of $F_{q+2,0}(\T,\X)$ of degree one in $\T$. But it is easy to see that $T_0F_{q+2,0}(\T,\X)$ and $T_1F_{q+2,0}(\T,\X)$ can be expressed as combinations of $F_{q,1}(\T,X)\,\,F_{q+1,1}(\T,X),$
so we have that these last two elements generate $\mbox{ker}(\Phi_0)_{1,*}$ as a $\K[\X]$-module. As they are part of a reduced Gr\"obner basis of a prime ideal, they must be irreducible polynomials, which implies straightforwardly  that they are $\K[\X]$-linearly independent.  The computation of the dimension follows directly from what we have just showed, as
$$\mbox{\rm ker}\big(\Phi_0\big)_{(1,\ell)}=\K[\X]_{\ell-|\sigma_q-\tau_q|}\cdot F_{q,1}(\T,\X)\oplus\K[\X]_{\ell-|\sigma_{q+1}-\tau_{q+q}|}\cdot F_{q+1,1}(\T,\X).
$$
\end{proof}
\smallskip
\begin{example}
For $(d,u)=(10,3),$ by inspecting the elements of $\cF_0$ given in \eqref{103}, we have that $q=5,$ and
\begin{equation}\label{xx}
\begin{array}{cclcclccl}
|\sigma_q|&=&5,&
|\tau_q|&=&2,&|\sigma_q-\tau_q|=7\\
|\sigma_{q+1}|&=&2,&
|\tau_{q+1}|&=&1& |\sigma_{q+1}-\tau_{q+1}|&=&3.
\end{array}
\end{equation}
So, we actually have that $\dim_{\K}\left(\mbox{\rm ker}(\Phi_0)_{1,\ell}\right)={\ell-5\choose2}+{\ell-1\choose2},$ this number is equal to $\ell^2-5\ell+10$ if $\ell\geq5.$
\end{example}

\begin{theorem}\label{adjj} For $\ell\geq d-2,$ consider a general element in $\mbox{ker}(\Phi_0)_{1,\ell}$  of the form
\begin{equation}\label{ppp}
A_{\ell-|\sigma_q-\tau_q|}(\X)F_{q,1}(\T,\X)+B_{\ell-|\sigma_{q+1}-\tau_{q+1}|}(\X)F_{q+1,1}(\T,\X),
\end{equation}
where
$$
A_{\ell-|\sigma_q-\tau_q|}(\X)=\sum_\ualpha a_\ualpha\,\X^\ualpha, \ \
B_{\ell-|\sigma_{q+1}-\tau_{q+1}|}(\X)=\sum_\ubeta b_\ubeta\,\X^\ubeta,
$$
the sum being over those $\ualpha,\,\ubeta$ such that  $|\ualpha|=\ell-|\sigma_q-\tau_q|,$ and $|\ubeta|=\ell-|\sigma_{q+1}-\tau_{q+1}|.$

Then, \eqref{ppp} belongs to $\mbox{\rm Adj}_\ell\big(\cC_{u,d}\big)$ if and only if $a_\ualpha=0$ for those $\ualpha$ such that  \eqref{pp1} holds, 
and $b_\ubeta=0$ for those $\ubeta$ such that \eqref{pp2} holds.

\end{theorem}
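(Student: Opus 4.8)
The plan is to decompose $\eqref{ppp}$ as $T_0\,C^0_\ell(\X)+T_1\,C^1_\ell(\X)$, apply Proposition~\ref{key} separately to $C^0_\ell$ and $C^1_\ell$, and then read the resulting vanishing conditions back off the coefficients $a_\ualpha,\,b_\ubeta$. By Lemma~\ref{ddim} every element of $\mbox{ker}(\Phi_0)_{(1,\ell)}$ has the shape $\eqref{ppp}$, and by the definition of a pencil of adjoints, $\eqref{ppp}$ lies in $\mbox{Adj}_\ell(\cC_{u,d})$ if and only if both curves cut out by $C^0_\ell$ and $C^1_\ell$ are adjoint to $\cC_{u,d}$. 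Reading $F_{q,1}$ and $F_{q+1,1}$ off $\eqref{fgen}$ (I treat $\sigma_q\le0$, so $\sigma_{q+1}>0$; the case $\sigma_q>0$ is obtained by swapping the indices $q$ and $q+1$ and flipping the signs of the $\sigma$'s and $\tau$'s), using $b_q=b_{q+1}=1$, one obtains
$$C^0_\ell=A\,X_0^{|\sigma_q|}X_2^{|\tau_q|}+B\,X_1^{|\sigma_{q+1}-\tau_{q+1}|},\qquad C^1_\ell=-\bigl(A\,X_1^{|\sigma_q-\tau_q|}+B\,X_0^{|\sigma_{q+1}|}X_2^{|\tau_{q+1}|}\bigr),$$
where $A=A_{\ell-|\sigma_q-\tau_q|}$ and $B=B_{\ell-|\sigma_{q+1}-\tau_{q+1}|}$.

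The first preparatory step is bookkeeping on exponents. From Proposition~\ref{connection}-iii (with $b_q=b_{q+1}=1$), from the fact that $\sigma_j$ and $\tau_j$ have opposite signs, and from Proposition~\ref{connection}-viii and $\eqref{t1t}$, I would record
$$|\sigma_q-\tau_q|=|\sigma_q|+|\tau_q|,\quad |\sigma_q-\tau_q|+|\sigma_{q+1}-\tau_{q+1}|=d,\quad |\sigma_q|+|\sigma_{q+1}|=d-u,\quad |\tau_q|+|\tau_{q+1}|=u,$$
together with the ``B\'ezout with remainder $1$'' identities
$$(d-u)|\sigma_q-\tau_q|=d|\sigma_q|+1,\ \ d|\tau_q|=u|\sigma_q-\tau_q|+1,\ \ (d-u)|\sigma_{q+1}-\tau_{q+1}|=d|\sigma_{q+1}|-1,\ \ d|\tau_{q+1}|=u|\sigma_{q+1}-\tau_{q+1}|-1.$$

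Next I would prove a separation statement: no monomial of $C^0_\ell$ (and none of $C^1_\ell$) that is forbidden in the sense of Proposition~\ref{key} can come both from $A$ and from $B$. Indeed, a monomial $\X^\gamma$ common to the $A$- and $B$-parts of $C^0_\ell$ must satisfy $\gamma\ge(|\sigma_q|,\,|\sigma_{q+1}-\tau_{q+1}|,\,|\tau_q|)$ coordinatewise, and plugging this into the inequalities of Proposition~\ref{key} and simplifying with the identities above yields $u\gamma_1+d\gamma_2\ge ud+1>(d-1)(u-1)$ and $d\gamma_0+(d-u)\gamma_1\ge d(d-u)-1\ge(d-1)(d-u-1)$, so $\X^\gamma$ is not forbidden; the computation for $C^1_\ell$ is the same with the bounds $ud-1$ and $d(d-u)+1$. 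Hence the coefficient in $C^0_\ell$ (resp. $C^1_\ell$) of a forbidden monomial is, up to sign, a single $a_\ualpha$ or a single $b_\ubeta$, and Proposition~\ref{key} gives: $C^0_\ell$ is adjoint if and only if $a_\ualpha=0$ whenever $\X^\ualpha X_0^{|\sigma_q|}X_2^{|\tau_q|}$ is forbidden and $b_\ubeta=0$ whenever $\X^\ubeta X_1^{|\sigma_{q+1}-\tau_{q+1}|}$ is forbidden, and likewise for $C^1_\ell$. Combining, $\eqref{ppp}\in\mbox{Adj}_\ell(\cC_{u,d})$ if and only if $a_\ualpha=0$ for every $\ualpha$ failing at least one of the four inequalities stating that $\X^\ualpha X_0^{|\sigma_q|}X_2^{|\tau_q|}$ or $\X^\ualpha X_1^{|\sigma_q-\tau_q|}$ is forbidden, and $b_\ubeta=0$ for every $\ubeta$ failing one of the four analogous inequalities.

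The last step — which I expect to be the crux — is to check that these four conditions on $\ualpha$ collapse to the two of $\eqref{pp1}$, and the four on $\ubeta$ to the two of $\eqref{pp2}$. Substituting the exponent identities, the four right-hand sides of the $\ualpha$-inequalities organize into two pairs differing by exactly $1$; the member of each pair with the smaller right-hand side (namely $u\alpha_1+d\alpha_2<(d-1)(u-1)-d|\tau_q|$ and $d\alpha_0+(d-u)\alpha_1<(d-1)(d-u-1)-(d-u)|\sigma_q-\tau_q|$, which are exactly the conditions of $\eqref{pp1}$) trivially implies the other, and the reverse implication can fail only if $u\alpha_1+d\alpha_2=F_2-u|\sigma_q-\tau_q|$ (respectively $d\alpha_0+(d-u)\alpha_1=F-d|\sigma_q|$), where $F=(d-1)(d-u-1)-1$ and $F_2=(d-1)(u-1)-1$ are the Frobenius numbers of the numerical semigroups $\langle d,\,d-u\rangle$ and $\langle u,\,d\rangle$. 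But neither value is attained by any triple in $\N^3$: were it, adding to it the manifestly representable element $u|\sigma_q-\tau_q|$ (respectively $d|\sigma_q|$) would exhibit $F_2$ (respectively $F$) as representable, a contradiction. Hence the coarser inequality is equivalent to the finer one and the four conditions reduce to $\eqref{pp1}$; the identical argument with $q$ replaced by $q+1$ — now the excluded values are $F_2-d|\tau_{q+1}|$ and $F-(d-u)|\sigma_{q+1}-\tau_{q+1}|$, ruled out by adding $d|\tau_{q+1}|$, respectively $(d-u)|\sigma_{q+1}-\tau_{q+1}|$, to recover $F_2$, respectively $F$ — reduces the four $\ubeta$-conditions to $\eqref{pp2}$. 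The case $\sigma_q>0$ being symmetric, this proves Theorem~\ref{adjj}.
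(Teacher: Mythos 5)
Your proof is correct and follows the same route as the paper: decompose \eqref{ppp} as $T_0\,C^0_\ell+T_1\,C^1_\ell$, apply Proposition~\ref{key} to each coefficient, prove the separation step (no forbidden monomial of $C^i_\ell$ can be shared by the $A$-part and the $B$-part — your direct inequality on a common monomial and the paper's check on the minimal common monomial $X_0^{|\sigma_q|}X_1^{|\sigma_{q+1}-\tau_{q+1}|}X_2^{|\tau_q|}$ are the same bookkeeping), and finally collapse the four inequalities on $\ualpha$ (resp.\ $\ubeta$) to the two of \eqref{pp1} (resp.\ \eqref{pp2}). The one genuine variation is in that last collapse. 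The paper takes $\sigma_q>0$, where the four conditions nest so that the two with larger right-hand sides — precisely those of \eqref{pp1} — absorb the others under the disjunction, and then waves at the opposite sign with a ``w.l.o.g.''. You take $\sigma_q\le0$, where the nesting runs the other way and would select the two conditions \emph{not} written in \eqref{pp1}, so you need the stronger fact that the two members of each pair actually coincide; you close the one-unit gap with a Frobenius-number argument (the boundary value is $F_2-u|\sigma_q-\tau_q|$, resp.\ $F-d|\sigma_q|$, and representing it would represent the Frobenius number itself). That observation is sharper than the paper's nesting, it is sign-invariant, and it in fact removes the sign dichotomy entirely — so your parenthetical ``swap $q\leftrightarrow q+1$'' is unnecessary (and, strictly speaking, \eqref{pp1} and \eqref{pp2} are not mirror images of each other, so one should not invoke such a swap as a symmetry). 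Overall: same decomposition, same key proposition, same separation, with a cleaner and more uniform finish to the reduction step.
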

\begin{proof}
Suppose w.l.o.g. that $\sigma_q\geq0,$ and that
$$\begin{array}{ccl}
F_{q,1}(\T,\X)&=&T_0X_0^{\sigma_q}X_2^{-\tau_q}-T_1X_1^{\sigma_q-\tau_q},\\
F_{q+1,1}(\T,\X)&=&T_0X_1^{-\sigma_{q+1}+\tau_{q+1}}-T_1X_0^{-\sigma_{q+1}}X_2^{\tau_{q+1}}
\end{array}
$$
(all the other cases follow mutatis mutandis what follows).  Note that we then have
\begin{equation}\label{numm}
\begin{array}{ccl}
\sigma_q-\sigma_{q+1}&=&d-u,\\
\tau_{q+1}-\tau_q&=&u.
\end{array}
\end{equation}
We expand \eqref{ppp} to get
$$\begin{array}{c}
\left(\sum_\ualpha a_\ualpha\,\X^\ualpha\right)\cdot\left(T_0X_0^{\sigma_q}X_2^{-\tau_q}-T_1X_1^{\sigma_q-\tau_q}\right)+
\left(\sum_\ubeta b_\ubeta\,\X^\ubeta\right)\cdot\left(T_0X_1^{-\sigma_{q+1}+\tau_{q+1}}-T_1X_0^{-\sigma_{q+1}}X_2^{\tau_{q+1}}\right) \\ = \\
T_0\left(\sum_\ualpha a_\ualpha\,\X^\ualpha X_0^{\sigma_q}X_2^{-\tau_q}+\sum_\ubeta b_\ubeta\,\X^\ubeta X_1^{-\sigma_{q+1}+\tau_{q+1}}\right)-T_1\left(\sum_\ualpha a_\ualpha\,\X^\ualpha X_1^{\sigma_q-\tau_q}+\sum_\ubeta b_\ubeta\,\X^\ubeta X_0^{-\sigma_{q+1}}X_2^{\tau_{q+1}}\right).
\end{array}$$
We will apply Proposition \ref{key} to the forms $\sum_\ualpha a_\ualpha\,\X^\ualpha X_0^{\sigma_q}X_2^{-\tau_q}+\sum_\ubeta b_\ubeta\,\X^\ubeta X_1^{-\sigma_{q+1}+\tau_{q+1}}$ and $\sum_\ualpha a_\ualpha\,\X^\ualpha X_1^{\sigma_q-\tau_q}+\sum_\ubeta b_\ubeta\,\X^\ubeta X_0^{-\sigma_{q+1}}X_2^{\tau_{q+1}}$ to see which conditions one should impose on $a_\ualpha$ and $b_\ubeta$ in such a way that these forms define adjoints to $\cC_{u,d}.$ Note first that if
$\X^\ualpha X_0^{\sigma_q}X_2^{-\tau_q}=\X^\ubeta X_1^{-\sigma_{q+1}+\tau_{q+1}}
$ for some $\ualpha,\,\ubeta,$ then this monomial is actually a multiple of $X_0^{\sigma_q}X_1^{-\sigma_{q+1}+\tau_{q+1}}X_2^{-\tau_q},$ which defines a curve which is adjoint to $\cC_{u,d}$ thanks to Proposition \ref{key}, due to the fact that
$$\begin{array}{ccl}
u(-\sigma_{q+1}+\tau_{q+1})-d\tau_q&=&\big(-u\sigma_{q+1}-(d-u)\tau_{q+1}\big)+d\big(\tau_{q+1}-\tau_q\big)\\
&=&-1+du\geq(d-1)(u-1)
\end{array}
$$
(the second equality follows from \eqref{numm} and \eqref{bezz}); and also
$$\begin{array}{ccl}
d\sigma_q+(d-u)(-\sigma_{q+1}+\tau_{q+1})&=&d(\sigma_q-\sigma_{q+1})+u\sigma_{q+1}+(d-u)\tau_{q+1}\\
&=&d(d-u)+1\geq(d-1)(d-u-1),
\end{array}
$$ again thanks to  \eqref{numm} and \eqref{bezz}. This implies that $\sum_\ualpha a_\ualpha\,\X^\ualpha X_0^{\sigma_q}X_2^{-\tau_q}+\sum_\ubeta b_\ubeta\,\X^\ubeta X_1^{-\sigma_{q+1}+\tau_{q+1}}$ (resp. $\sum_\ualpha a_\ualpha\,\X^\ualpha X_1^{\sigma_q-\tau_q}+\sum_\ubeta b_\ubeta\,\X^\ubeta X_0^{-\sigma_{q+1}}X_2^{\tau_{q+1}}$) defines a curve adjoint to $\cC_{u,d}$ if and only if  both
$\sum_\ualpha a_\ualpha\,\X^\ualpha X_0^{\sigma_q}X_2^{-\tau_q}$ and $\sum_\ubeta b_\ubeta\,\X^\ubeta X_1^{-\sigma_{q+1}+\tau_{q+1}}$
(resp. $\sum_\ualpha a_\ualpha\,\X^\ualpha X_1^{\sigma_q-\tau_q}$ and $\sum_\ubeta b_\ubeta\,\X^\ubeta X_0^{-\sigma_{q+1}}X_2^{\tau_{q+1}}$)
define adjoints to $\cC_{u,d}.$
 We analyze these four forms separately, and get the following:
\begin{itemize}
\item for  the $\ualpha$'s, one must have $a_\ualpha=0$ if and only if  the following  holds (the first two from one of the forms,  and the remaining from the other):
\begin{equation}\label{ppr}
\begin{array}{ccl}
u\alpha_1+d(\alpha_2-\tau_q)&<& (d-1)(u-1),\ \mbox{or}\\
d(\alpha_0+\sigma_q)+(d-u)\alpha_1&<&(d-1)(d-u-1)\\ &\mbox{and} & \\
u(\alpha_1+\sigma_q-\tau_q)+d\alpha_2&<& (d-1)(u-1), \ \mbox{or}\\
d\alpha_0+(d-u)(\alpha_1+\sigma_q-\tau_q)&<&(d-1)(d-u-1).
\end{array}
\end{equation}
As $u\tau_q+(d-u)\sigma_q=1,$ we then have that $d\tau_q> u(\tau_q-\sigma_q),$ and $-d\sigma_q<-(d-u)(\sigma_q-\tau_q).$ We then deduce that  \eqref{ppr} is equivalent to
\eqref{pp1}

\item For $\ubeta$'s we must have  $b_\ubeta=0$ if and only if
the following system of inequalities hold:
\begin{equation}\label{prp}
\begin{array}{ccl}
u(\beta_1+\tau_{q+1}-\sigma_{q+1})+d\beta_2&<& (d-1)(u-1),\, \mbox{or}\\
d\beta_0+(d-u)(\beta_1+\tau_{q+1}-\sigma_{q+1})&<&(d-1)(d-u-1)\\ &\mbox{and}&\\
u\beta_1+d(\beta_2+\tau_{q+1})&<& (d-1)(u-1),\, \mbox{or}\\
d(\beta_0-\sigma_{q+1})+(d-u)\beta_1&<&(d-1)(d-u-1)
\end{array}
\end{equation}
Analyzing as before, we get that  \eqref{prp} is equivalent to \eqref{pp2}.
\end{itemize}
\end{proof}
Theorem \ref{kkk} in Section \ref{setup} follows straightforwardly from Theorem \ref{adjj} and Lemma \ref{ddim} above.

\smallskip
We close this section by showing some estimates on the size of the vector spaces involved in these calculations.
\begin{lemma}\label{aux}
For $\ell\geq d-2,$ the set of solutions of  \eqref{pp2} is not empty.
\end{lemma}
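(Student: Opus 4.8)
The plan is to produce one explicit triple $\ubeta$ satisfying the \emph{second} of the two inequalities grouped in \eqref{pp2}; since \eqref{pp2} is a disjunction, this suffices. First I would record two elementary consequences of $1<u<\frac d2$: that $d\ge 5$ (because $d>2u\ge 4$) and that $d-u\ge\frac{d+1}2$ (because $d-u>u$ forces $d-u\ge u+1$, hence $2(d-u)\ge d+1$). I would also note that, by Proposition \ref{connection}-viii), $|\sigma_{q+1}-\tau_{q+1}|\le\frac d2\le d-2$, so for $\ell\ge d-2$ the integer $L:=\ell-|\sigma_{q+1}-\tau_{q+1}|$ is nonnegative. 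Then I would take $\ubeta=(0,0,L)\in\N^3$: it has the prescribed degree $|\ubeta|=L$, and the left-hand side $d\beta_0+(d-u)\beta_1$ of the second inequality of \eqref{pp2} vanishes. Thus the statement reduces to the numerical claim
\[
d\,|\sigma_{q+1}|<(d-1)(d-u-1).
\]

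To establish this I would invoke Proposition \ref{connection}-vi), which gives $|\sigma_{q+1}|\le\frac{d-u}{a_{p-2}}$, together with the bound $a_{p-2}\ge 2$; the latter holds because the Euclidean remainder sequence of $(d,u)$ is strictly decreasing down to $a_{p-1}=\gcd(d,u)=1$, and $p\ge 3$ when $u>1$. Hence $|\sigma_{q+1}|\le\frac{d-u}2$, and it is enough to check $\frac{d(d-u)}2<(d-1)(d-u-1)$. Writing $w=d-u$, the function $w\mapsto (d-1)(w-1)-\frac{dw}2$ has derivative $\frac d2-1>0$, so it is increasing; evaluating it at the lower bound $w=\frac{d+1}2$ yields $\frac{2(d-1)^2-d(d+1)}4=\frac{d^2-5d+2}4$, which is strictly positive for every integer $d\ge 5$. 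Combining, $d\,|\sigma_{q+1}|\le\frac{d(d-u)}2<(d-1)(d-u-1)$, which is exactly what was needed, and $\ubeta=(0,0,L)$ is a solution of \eqref{pp2}.

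The only step requiring a little care is the inequality $a_{p-2}\ge 2$, i.e. verifying that the Euclidean sequence really reaches a value $\ge 2$ before terminating (equivalently, that $p\ge 3$, which fails only when $u\mid d$, hence $u=1$); everything else is a short computation. An alternative that avoids Proposition \ref{connection}-vi) altogether is to use that $(\sigma_{q+1},\tau_{q+1})$ is, up to sign, the minimal B\'ezout pair for $(u,d-u)$ — it satisfies $\sigma_{q+1}u+\tau_{q+1}(d-u)=b_{q+1}=1$ with $|\sigma_{q+1}|<d-u$, $|\tau_{q+1}|<u$ and opposite signs — from which $|\sigma_{q+1}|\le\frac{d-u}2$ follows directly; but citing Proposition \ref{connection}-vi) keeps the write-up shortest.
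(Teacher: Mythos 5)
Your proof is correct, and it takes a genuinely different route from the paper. The paper argues by contradiction: it assumes that \emph{neither} of the trivial triples $(L,0,0)$ and $(0,0,L)$ (where $L=\ell-|\sigma_{q+1}-\tau_{q+1}|$) satisfies \eqref{pp2}, adds the resulting two inequalities, simplifies using the identity $u\sigma_{q+1}+(d-u)\tau_{q+1}=b_{q+1}=1$, and then bounds $|\sigma_{q+1}-\tau_{q+1}|\le d/2$ via Proposition~\ref{connection}-viii); this forces $d\le 4$, contradicting $1<u<d/2$ with $\gcd(u,d)=1$. You instead give a direct construction, exhibiting $(0,0,L)$ as a solution of the \emph{second} inequality of \eqref{pp2} for every admissible $(d,u)$ with $\ell\ge d-2$; this requires the sharper bound $|\sigma_{q+1}|\le (d-u)/a_{p-2}\le (d-u)/2$ from Proposition~\ref{connection}-vi), together with the observation that $a_{p-2}\ge 2$ when $u>1$. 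Your argument is slightly more refined (the paper only needs the coarser bound from viii), because the contradiction machinery recovers the extra room), but it has the advantage of producing an explicit solution rather than merely proving existence. One small remark: you could not have run the symmetric direct argument with $(L,0,0)$ and the first inequality (that fails, e.g., for $(d,u)=(5,2)$), so your choice of the second inequality is genuinely forced, not cosmetic. All the ancillary estimates you record — $d\ge 5$, $d-u\ge(d+1)/2$, $L\ge 0$, $a_{p-2}\ge 2$ — are correct under the standing hypothesis $u>1$ of Section~\ref{adj}.
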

\begin{proof}
Note that if one of the two members of the right hand side of \eqref{pp2} is positive, then we would have that either $(\ell-|\sigma_{q+1}-\tau_{q+1}|,0,0)$ or $(0,0,\ell-|\sigma_{q+1}-\tau_{q+1}|)$ is a solution of \eqref{pp2}. So, if there are  no solutions of  this system, then we must have
$$(d-1)(u-1)+u(\sigma_{q+1}-\tau_{q+1})\leq0 \ \mbox{and}\
(d-1)(d-u-1)+d\sigma_{q+1}\leq0.
$$
Adding these two inequalities, we get
$$\begin{array}{ccl}
(d-1)(d-2)&\leq& -u(\sigma_{q+1}-\tau_{q+1})-d\sigma_{q+1}\\
&=&-u\sigma_{q+1}-(d-u)\tau_{q+1}+d(\tau_{q+1}-\sigma_{q+1})\\
&=&-1+d|\tau_{q+1}-\sigma_{q+1}|\\
&\leq &-1+d\,\frac{d}2,
\end{array}
$$
the last inequality holds thanks to Proposition \ref{connection}-viii). This shows that $d$ must be less than or equal to $4$. As we are assuming $1<u\leq \frac{d}2,$ then we are forced to have $u=2$ and $d=4,$ which is impossible as $u$ and $d$ are supposed to be coprime.
\end{proof}
\smallskip
\begin{corollary}
For all $\ell\geq d-2,\,\mbox{\em ker}(\Phi_0)_{(1,\ell)}$ is not contained in $\mbox{\em Adj}_\ell(\cC_{u,d}).$
\end{corollary}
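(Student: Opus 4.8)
The plan is to read off this corollary directly from Theorem \ref{kkk} together with the nonemptiness statement proved in Lemma \ref{aux}. Recall that for $\ell\geq d-2$, Theorem \ref{kkk} gives $\dim_\K\left(\mbox{ker}(\Phi_0)_{(1,\ell)}\,/\,\mbox{Adj}_\ell(\cC_{u,d})\cap\mbox{ker}(\Phi_0)_{(1,\ell)}\right)=\nu_{u,d}$, where $\nu_{u,d}$ is by definition the number of solutions $\ualpha$ of \eqref{pp1} plus the number of solutions $\ubeta$ of \eqref{pp2}. If the inclusion $\mbox{ker}(\Phi_0)_{(1,\ell)}\subseteq\mbox{Adj}_\ell(\cC_{u,d})$ held, this quotient would be zero, forcing $\nu_{u,d}=0$; in particular \eqref{pp2} would have no solution.

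So first I would invoke Lemma \ref{aux}: for every $\ell\geq d-2$ the system \eqref{pp2} admits at least one solution $\ubeta\in\N^3$. Consequently $\nu_{u,d}\geq 1>0$, the quotient above is nontrivial, and hence $\mbox{ker}(\Phi_0)_{(1,\ell)}$ is not contained in $\mbox{Adj}_\ell(\cC_{u,d})$, as claimed.

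If one prefers an argument that does not quote the full strength of Theorem \ref{kkk}, I would instead fix a solution $\ubeta_0$ of \eqref{pp2} provided by Lemma \ref{aux} and display the element of $\mbox{ker}(\Phi_0)_{(1,\ell)}$ of the shape \eqref{ppp} with $A_{\ell-|\sigma_q-\tau_q|}(\X)=0$ and $B_{\ell-|\sigma_{q+1}-\tau_{q+1}|}(\X)=\X^{\ubeta_0}$; that this is a bona fide element of $\mbox{ker}(\Phi_0)_{(1,\ell)}$ follows from Lemma \ref{ddim} (note $\ell-|\sigma_{q+1}-\tau_{q+1}|\geq 0$ automatically, since otherwise \eqref{pp2} would have no solution). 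By Theorem \ref{adjj} this element lies in $\mbox{Adj}_\ell(\cC_{u,d})$ only if $b_\ubeta=0$ for every $\ubeta$ satisfying \eqref{pp2}; but its coefficient $b_{\ubeta_0}$ equals $1\neq 0$, so it is not a pencil of adjoints.

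There is essentially no obstacle here: the substantive content is already contained in Theorem \ref{kkk} and, above all, in the nonemptiness argument of Lemma \ref{aux} — namely, adding the two inequalities of \eqref{pp2}, using $|\sigma_{q+1}-\tau_{q+1}|\leq d/2$ from Proposition \ref{connection}-viii) to force $d\leq 4$, and then deriving a contradiction with $1<u<d/2$ and $\gcd(u,d)=1$. The corollary itself is just the final bookkeeping step.
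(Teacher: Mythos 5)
Your proof is correct, and your two routes both work. The paper's own proof is closest to your second (explicit-element) route: it reads off from the proof of Lemma \ref{aux} that one of the two tuples $(\ell-|\sigma_{q+1}-\tau_{q+1}|,0,0)$ or $(0,0,\ell-|\sigma_{q+1}-\tau_{q+1}|)$ solves \eqref{pp2}, and therefore one of $X_0^{\ell-|\sigma_{q+1}-\tau_{q+1}|}F_{q+1,1}(\T,\X)$ or $X_2^{\ell-|\sigma_{q+1}-\tau_{q+1}|}F_{q+1,1}(\T,\X)$ lies in $\mbox{ker}(\Phi_0)_{(1,\ell)}$ but not in $\mbox{Adj}_\ell(\cC_{u,d})$, whereas you leave the choice of witnessing $\ubeta_0$ abstract. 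Your primary route via the dimension count in Theorem \ref{kkk} is a harmless variant that gets the same conclusion with fewer words, at the cost of hiding the explicit non-adjoint moving curve that the paper's version names; both correctly reduce the substance to the nonemptiness statement in Lemma \ref{aux}.
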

\begin{proof}
From the proof of Lemma \ref{aux}, we deduce straightforwardly that at least one among $X_0^{\ell-|\sigma_{q+1}-\tau_{q+1}|}F_{q+1,1}(\T,X)$ and $X_2^{\ell-|\sigma_{q+1}-\tau_{q+1}|}F_{q+1,1}(\T,X)$  does not belong to $\mbox{ Adj}_\ell(\cC_{u,d}).$
\end{proof}
\smallskip
We finish by showing a rough estimate on the number $\nu_{u,d},$ the dimension of the quotient $\mbox{\rm ker}(\Phi_0)_{(1,\ell)}\, /\,\mbox{\rm Adj}_\ell(\cC_{u,d})\cap\mbox{\rm ker}(\Phi_0)_{(1,\ell)}$ for $\ell\geq d-2.$
\begin{proposition}\label{arriba}
$$\nu_{u,d}\leq d^2-6d+6.$$
\end{proposition}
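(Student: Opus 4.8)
The plan is to compute $\nu_{u,d}$ exactly up to a controlled defect and then estimate. Since $\nu_{u,d}$ does not depend on $\ell$ for $\ell\ge d-2$, I would evaluate it for $\ell$ arbitrarily large; then in any solution of \eqref{pp1} or \eqref{pp2} the first coordinate of $\ualpha$ (resp.\ $\ubeta$) is forced and automatically nonnegative, so the number of $\ualpha$ contributing through the \emph{first} alternative of \eqref{pp1} equals $\#\{(\alpha_1,\alpha_2)\in\N^2:\,u\alpha_1+d\alpha_2<(d-1)(u-1)-d|\tau_q|\}$, and likewise for the other three alternatives. By Lemma \ref{diof}, below the respective Frobenius bound each attainable value of $u\alpha_1+d\alpha_2$ (resp.\ $d\alpha_0+(d-u)\alpha_1$) has a unique representation, so these four quantities are the numbers $R_1,R_2,R_1',R_2'$ of integers lying in $[0,N)$ and representable in the numerical semigroup $\langle u,d\rangle$ or $\langle d,d-u\rangle$, where $N$ is $(d-1)(u-1)-d|\tau_q|$, $(d-1)(d-u-1)-(d-u)|\sigma_q-\tau_q|$, $(d-1)(u-1)-u|\sigma_{q+1}-\tau_{q+1}|$ and $(d-1)(d-u-1)-d|\sigma_{q+1}|$ respectively. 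Adding the two defining inequalities of \eqref{pp1} (resp.\ of \eqref{pp2}) would force $d|\ualpha|<(d-1)(d-2)$, i.e.\ $|\ualpha|\le d-3$, which is impossible for $\ell$ large; hence the two alternatives in each system are mutually exclusive and $\nu_{u,d}=R_1+R_2+R_1'+R_2'$.

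Next I would invoke the Sylvester symmetry underlying Lemma \ref{aixx}: for coprime $a,b$ with Frobenius number $F=(a-1)(b-1)-1$, the involution $j\mapsto F-j$ interchanges representable and non-representable integers, so for every $M\ge 0$ the number of integers in $[0,(a-1)(b-1)-M)$ representable in $\langle a,b\rangle$ equals $\tfrac12(a-1)(b-1)$ minus the number of non-representable integers in $[0,M)$ --- both sides being $0$ once $M\ge(a-1)(b-1)$, which disposes of the (frequent) degenerate cases where some $N$ is negative without any separate treatment. Applying this to $R_1,R_2,R_1',R_2'$ (with $M=d|\tau_q|$, $(d-u)|\sigma_q-\tau_q|$, $u|\sigma_{q+1}-\tau_{q+1}|$, $d|\sigma_{q+1}|$ respectively) and summing, the four half-terms add up to $(u-1)(d-1)+(d-u-1)(d-1)=(d-1)(d-2)$, and therefore
\[
\nu_{u,d}=(d-1)(d-2)-\Sigma ,
\]
where $\Sigma$ is the total number of non-representable integers counted in those four initial segments. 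It then remains to prove $\Sigma\ge 3d-4$.

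To bound $\Sigma$ from below I would first record the elementary facts, all immediate from $\sigma_n u+\tau_n(d-u)=b_n$ together with the sign pattern of $(\sigma_n,\tau_n)$, the identities \eqref{t1t}, and parts vii) and viii) of Proposition \ref{connection}: that $\sigma_q,\tau_q,\sigma_{q+1},\tau_{q+1}$ are all nonzero, $|\sigma_{q+1}-\tau_{q+1}|=|\sigma_{q+1}|+|\tau_{q+1}|\ge 2$, and $|\sigma_q-\tau_q|=d-|\sigma_{q+1}-\tau_{q+1}|\ge\lceil d/2\rceil\ge 3$ (here $d\ge 5$, since $d\le 4$ admits no $u$ with $1<u<d/2$ and $\gcd(u,d)=1$, making the statement vacuous). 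Consequently the four ``deficits'' $d|\tau_q|$, $(d-u)|\sigma_q-\tau_q|$, $u|\sigma_{q+1}-\tau_{q+1}|$, $d|\sigma_{q+1}|$ are at least $d$, $3(d-u)$, $2u$, $d$, so by monotonicity $\Sigma$ is at least the number of non-representable integers of $\langle u,d\rangle$ in $[0,d)$ plus that in $[0,2u)$, plus the number for $\langle d,d-u\rangle$ in $[0,3(d-u))$ plus that in $[0,d)$. These are elementary to evaluate --- the representable integers in such short windows are $\{ku:0\le k\le\lfloor(d-1)/u\rfloor\}$, $\{0,u\}$, a subset of $\{0,d-u,2(d-u),d,2d\}$, and $\{0,d-u\}$ --- giving $d-1-\lfloor(d-1)/u\rfloor$, $2u-2$, at least $3(d-u)-5$, and $d-2$ respectively. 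Summing, $\Sigma\ge 5d-10-u-\lfloor(d-1)/u\rfloor$, and since $t\mapsto t+(d-1)/t$ is convex on $[2,(d-1)/2]$ its maximum there is $(d+3)/2\le 2d-6$ for $d\ge 5$, so $u+\lfloor(d-1)/u\rfloor\le 2d-6$ and $\Sigma\ge 3d-4$, which finishes the proof. The main obstacle is exactly this last estimate: the naive choice of the deficit lower bound $2(d-u)$ for the $\langle d,d-u\rangle$ window attached to $|\sigma_q-\tau_q|$ falls short by precisely one, so one genuinely needs $|\sigma_q-\tau_q|\ge 3$ (equivalently $\lceil d/2\rceil\ge 3$) to reach the threshold $3d-4$; everything else is routine counting in numerical semigroups generated by two elements.
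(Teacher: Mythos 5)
Your approach is genuinely different from the paper's and considerably more involved. The paper's own proof simply bounds the number of solutions of each of the four strict inequalities by the corresponding threshold $N_i$ (using Lemma~\ref{diof} to pass from solution pairs to attainable values of $j$), and then evaluates $\sum N_i$ exactly by plugging in $u\sigma_q+(d-u)\tau_q=u\sigma_{q+1}+(d-u)\tau_{q+1}=1$ and $\tau_q-\tau_{q+1}=-u$, $\sigma_{q+1}-\sigma_q=-(d-u)$, obtaining $\sum N_i=2(d-1)(d-2)-(d^2-2)=d^2-6d+6$ in one line. You instead compute $\nu_{u,d}$ \emph{exactly} as $(d-1)(d-2)-\Sigma$ through the Sylvester involution and then establish the lower bound $\Sigma\ge 3d-4$. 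Since $(d-1)(d-2)-\sum N_i=3d-4$, this is precisely the inequality $\nu_{u,d}\le\sum N_i$, so your route is in fact more careful than the paper's terse one: the thresholds $N_i$ can be negative (already $N_1=-2$ for $(d,u)=(10,3)$, and $N_1=N_2=-1$ for $(d,u)=(5,2)$), so the naive bound (count)$\le N_i$ is false term by term, and the conclusion $\nu_{u,d}\le\sum N_i$ really does require the extra Sylvester-type counting that you carry out.

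Unfortunately there is a concrete error in your counting. In bounding the non-representable integers of $\langle d,d-u\rangle$ in $[0,3(d-u))$ you claim the representable integers in this window form a subset of $\{0,d-u,2(d-u),d,2d\}$, but $2d-u=d+(d-u)$ is also always in the window (since $2u<d$ gives $2d-u<3(d-u)$). So there can be up to \emph{six} representable integers, exactly six whenever $u<d/3$: e.g.\ $(d,u)=(7,2)$ gives $\{0,5,7,10,12,14\}\subset[0,15)$. The bound $3(d-u)-5$ for the number of non-representable integers therefore fails in that range. With the corrected unconditional bound $3(d-u)-6$, the last step of your argument becomes $u+\lfloor(d-1)/u\rfloor\le 2d-7$, which fails at $d=5$. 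The gap is repairable --- when $u<d/3$ one has $d>3u\ge 6$, hence $d\ge 7$, for which $(d+3)/2\le 2d-7$ does hold; and when $u\ge d/3$ the element $2d$ drops out of the window and your original count of five is correct, so the bound $u+\lfloor(d-1)/u\rfloor\le 2d-6$ suffices --- but as written the inequality $3(d-u)-5$ is not justified and the argument is incorrect without this case distinction.
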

\begin{proof}
Suppose again w.l.o.g. that $\sigma_q\geq0.$ We use Lemma \ref{diof} to bound the number of solutions of \eqref{pp1} and \eqref{pp2}, to get that
$$\begin{array}{ccl}
\nu_{u,d}&\leq & 2(d-1)(d-2)+d\tau_q-(d-u)(\sigma_q-\tau_q)+u(\sigma_{q+1}-\tau_{q+1})+d\sigma_{q+1}\\
&=&2(d-1)(d-2)+d(\tau_q-\sigma_q+\sigma_{q+1}-\tau_{q+1})+u\sigma_q+(d-u)\tau_q+u\sigma_{q+1}+(d-u)\tau_{q+1}\\
&=&2(d-1)(d-2)-d^2+2\\
&=& d^2-6d+6.
\end{array}$$
\end{proof}
\smallskip
We will see at the end of the following section that for some family of examples, the number $\nu_{u,d}$ grows quadratically in $d^2,$ hence one should regard the bound in Proposition \ref{arriba} as asymptotically optimal.
\bigskip
\section{Further examples}\label{examples}
We conclude this paper by working out  a couple of examples to show how all the elements in the resolution of $\mbox{Rees}({\mathcal I})$ can be computed straightforwardly by using the results obtained in the previous sections. We will start by studying with detail the case $(d,u)=(14,3).$ Here, we have that the ordinary Euclidean algorithm gives:
$$\begin{array}{ccl}
11&=&3\cdot 3+2\\
3&=&1\cdot 2+1\\
2&=&2\cdot 1+0
\end{array}
$$
and so
$p=4$, $q=6$, $\{a_n\}=\{11,\,3,\,2,\,1\,\, 0\},\,\{q_m\}=\{3,\,1,\,2\}$, $\{ m_l \}=\{ 1,\,4,\,5,\,7,\,8\}$.
Its SERS  is
$$\begin{array}{ccl}
(b_1,c_1)&=&(11,3)\\
(b_2, c_2)&=&(8,3)\\
(b_3, c_3)&=&(5,3)\\
(b_4, c_4)&=&(3,2)\\
(b_5, c_5)&=&(2,1)\\
(b_6,c_6)&=&(1,1)\\
(b_7,c_7)&=&(1,0),
\end{array}
$$
and its Extended SERS:
$$\begin{array}{ccl}
(\sigma_1,\tau_1,\alpha_1,\beta_1)&=&(0,1,1,0)\\
(\sigma_2,\tau_2,\alpha_2,\beta_2)&=&(-1,1,1,0)\\
(\sigma_3,\tau_3,\alpha_1,\beta_1)&=&(-2,1,1,0)\\
(\sigma_4,\tau_4,\alpha_1,\beta_1)&=&(1,0,-3,1)\\
(\sigma_5,\tau_5,\alpha_1,\beta_1)&=&(-3,1,4,-1)\\
(\sigma_6,\tau_6,\alpha_1,\beta_1)&=&(-7,2,4,-1)\\
(\sigma_7,\tau_7,\alpha_1,\beta_1)&=&(4,-1,-11,3).
\end{array}
$$
Observe that $\sigma_q=\sigma_6=-7<0$. Thus,  $\cF_0$ is a a minimal system of generators of $\mbox{ker}(\Phi_0)$ consisting of $q+2=8$ polynomials. Computed explicitly via \eqref{fgen}, we obtain:
$$
\begin{array}{ccl}
F_{1,11}(\T,\X)&=&T_0^{11}X_2-T_1^{11}X_1\\
F_{2,8}(\T,\X)&=&T_0^8X_0X_2-T_1^8X_1^2\\
F_{3,5}(\T,\X)&=&T_0^5X_0^2X_2-T_1^5X_1^3\\
F_{4,3}(\T,\X)&=& T_0^3X_1-T_1^3X_0\\
F_{5,2}(\T,\X)&=& T_0^2X_0^3X_2-T_1^2X_1^4\\
F_{6,1}(\T,\X)&=& T_0X_0^7X_2^2-T_1X_1^9\\
F_{7,1}(\T,\X)&=& T_0X_1^5-T_1X_0^4X_2\\
F_{8,0}(\T,\X)&=& X_1^{14}-X_0^{11}X_2^3.
\end{array}
 $$
Their  bidegrees are
$$
\{ \mbox{bideg}(F_{n,b_n}) \}=\{ (11,1),(8,2),(5,3),(3,1),(2,4),(1,9),(1,5),(0,14) \}
$$
Now we turn to the computation of a basis of $\mbox{syz}(\cF_0).$ By Theorem \ref{phi1} and \eqref{2en1},\,\eqref{rrest}, we have that the family $\cF_1 \subset S^8$ is made by the following $12$ syzygies:
 \[\begin{array}{ll}
\bs_{1,2}=X_0\e_1-T_0^3\e_2-T_1^8X_1\e_4,&
\bs_{1,4}=X_1\e_1-T_0^8X_2\e_4-T_1^3\e_2 \\
\bs_{2,3}=X_0\e_2-T_0^3 \e_3-T_1^5X_1^2\e_4,&
\bs_{2,4}=X_1\e_2-T_0^5X_0X_2\e_4-T_1^3\e_3\\
\bs_{3,4}=X_1\e_3-T_0^2X_0^2X_2\e_4-T_1^3\e_5,&
\bs_{3,5}=X_0\e_3-T_0^3\e_5-T_1^2X_1^3\e_4 \\
\bs_{4,5}=X_0^3X_2\e_4-T_0X_1\e_5-T_1^2\e_7,&
\bs_{4,7}=X_1^4\e_4-T_0^2\e_7-T_1X_0\e_5\\
\bs_{5,6}= X_0^4X_2\e_5-T_0\e_6-T_1X_1^4\e_7,&
\bs_{5,7}=X_1^5\e_5-T_0X_0^3X_2\e_7-T_1\e_6\\
\bs_{6,7}=X_1^5\e_6-X_0^7X_2^2\e_7-T_1\e_8,&
\bs_{7,8}=X_1^9\e_7-T_0\e_8-X_0^4X_2\e_6,
\end{array}
\]
with $
\{ \mbox{bideg}(s_{n,m_{l(n)}}) = \mbox{bideg}(s_{n,\sigma(n)} ) \}_{n=1,\ldots, 5}=\{ (11,2),(8,3),(5,4),(3,5),(2,9) \},
$
and $\mbox{bideg}(\bs_{6,7})=\mbox{bideg}(\bs_{7,8})=(1,14)$.

 We can also compute the elements of family $\cF_2 \subset S^{12}$ via Proposition \ref{aiecomae}. It consists of the following syzygies:
\[\begin{array}{l}
\bs_{1,2,4}= X_1\e_{1,2}-X_0\e_{1,4}+T_0^3\e_{2,4}-T_1^3\e_{2,3}  \\
\bs_{2,3,4}= X_1\e_{2,3}-X_0\e_{2,4}+T_0^3\e_{3,4}-T_1^3\e_{3,5}\\
\bs_{3,4,5}=X_0\e_{3,4}-X_1\e_{3,5}+T_0^2\e_{4,5}-T_1^2\e_{4,7} \\
\bs_{4,5,7}= X_1^4\e_{4,5}-X_0^3X_2\e_{4,7}+T_0\e_{5,7}-T_1\e_{5,6}\\
\bs_{5,6,7}= X_1^5\e_{5,6}-X_0^4X_2\e_{5,7}+T_0\e_{6,7}-T_1\e_{6,8},
\end{array}
\]
with $\{ \mbox{bideg}(s_{n,\rho(n),l(n)} )\} = \{ (11,3),(8,4),(5,5),(3,9),(2,14) \}.$
So, we get a whole description of the resolution of $\mbox{Rees}({\mathcal I})$ for this case.
\par Let us turn now to the case of adjoints.
In order to determine $\dim\big(\mbox{Adj}_\ell(\cC_{3,14})\big)$ and  $\dim\big(\mbox{ker}(\Phi_0)_{1,\ell}\big)$, for $\ell\geq 12
$, we compute $q=6,\,\sigma_6=-7$, $\tau_6=2$, $\sigma_7=4$, $\tau_7=-1,$
$|\sigma_6-\tau_6|=9$, and $|\sigma_7 -\tau_7|=5$.
Then, we have
$$\dim_\K\left(\mbox{Adj}_\ell(\cC_{3,14})\right)= (\ell+2)(\ell+1)-156 =\ell^2+3\ell-154,$$
and
$\dim_{\K}\left(\mbox{ker}(\Phi_0)_{1,\ell}\right)= {\ell-7\choose 2}+{\ell-3\choose 2}=\ell^2-11\ell+34.
$
To make  $\nu_{3,14}$ explicit, we have to compute the number of solutions of 
$$
3\alpha_1+14\alpha_2<-2  \quad \mbox{or}\ \
14\alpha_0+11\alpha_1<31,
$$
and also the number of solutions of
$$ 3\beta_1+14\beta_2<11\quad \mbox{or}\ \
14\beta_0+11\beta_1<74,
$$
with $(\alpha_0,\alpha_1,\alpha_2)$,  $(\beta_0,\beta_1,\beta_2)\in\N^3$, $\alpha_0+\alpha_1+\alpha_2=\ell-9$,
 $\beta_0+\beta_1+\beta_2=\ell-5$.

 The first of the four inequalities has no integer solutions. For the other three, there are $6$ solutions for $14\alpha_0+11\alpha_1<31,$ one per each of the values $0,14,28,11,22,25$;
 $4$ solutions for  the values of $3\beta_1+14\beta_2<11$ corresponding to $0,3,6,9;$ and $23$ solutions of $14\beta_0+11\beta_1<74,$ for the values    $0,11,14,22,25,28,33,36,39,42,44,47,50,53,$ $55,56,58,61,64,66,67,69,70.$
\par
So, we have, for $\ell\geq12,$
 $$\dim_\K\left(\mbox{\rm ker}(\Phi_0)_{(1,\ell)}\, /\,\mbox{\rm Adj}_\ell(\cC_{3,14})\cap\mbox{\rm ker}(\Phi_0)_{(1,\ell)} \right)=\nu_{3,14}=4+6+23=33.
$$
\par
Let us consider now the case where $u$ is fixed, $u=2$, and $d$ an integer coprime with $2,$ i.e. $d=2k-1$ with $k\geq3.$ For this case, we have that the ordinary Euclidean sequence is
$$\begin{array}{cclcl}
2k-3&=&(k-2)\cdot 2&+&1\\
2&=&2\cdot 1&+&0.
\end{array}
$$
Hence,
$p=3$, $q=k$,
$\{a_n\}=\{2k-3,\,2,\,1\,\, 0\},\,\{q_m\}=\{k-2,\,2\}$, and $\{ m_l \}=\{ 1,\,k-1,\,k+1,\,k+2\}$.
The SERS  associated to this data is
$$\begin{array}{lcl}
(b_1,c_1)&=&(2k-3,2)\\
(b_2, c_2)&=&(2k-5,2)\\
& \vdots &\\
(b_n, c_n)&=&(2k-3-2(n-1),2)=(2(k-n+1)-3,2)=(2(k-n)-1,2)\\
& \vdots & \\
(b_{k-2}, c_{k-2})&=&(3,2)\\
(b_{k-1},c_{k-1})&=&(2,1)\\
(b_k,c_k)&=&(1,1)\\
(b_{k+1},c_{k+1})&=&(1,0),
\end{array}
$$
and its Extended SERS is
$$\begin{array}{lcl}
(\sigma_1,\tau_1,\alpha_1,\beta_1)&=&(0,1,1,0)\\
(\sigma_2,\tau_2,\alpha_2,\beta_2)&=&(-1,1,1,0)\\
 & \vdots & \\
(\sigma_n,\tau_n,\alpha_n,\beta_n)&=&(-(n-1),1,1,0)\\
 & \vdots & \\
(\sigma_{k-2},\tau_{k-2},\alpha_{k-2},\beta_{k-2})&=&(-(k-3),1,1,0)\\
(\sigma_{k-1},\tau_{k-1},\alpha_{k-1},\beta_{k-1})&=&(1,0,-(k-2),1)\\
(\sigma_k,\tau_k,\alpha_k,\beta_k)&=&(k-1,-1,-(k-2),1)\\
(\sigma_{k+1},\tau_{k+1},\alpha_{k+1},\beta_{k+1})&=&(-(k-2),1,2k-3,-2).
\end{array}
$$
Observe that $\sigma_q=\sigma_k>0$. Thus, the family $\cF_0$ is made by the following $k+2$ polynomials:
$$
\begin{array}{lcl}
F_{1,2k-3}(\T,\X)&=&T_0^{2k-3}X_2-T_1^{2k-3}X_1\\
&\vdots & \\
F_{n,2(k-n)-1}(\T,\X)&=&T_0^{2(k-n)-1}X_0^{n-1}X_2-T_1^{2(k-n)-1}X_1^n\\
&\vdots &\\
F_{k-2,3}(\T,\X)&=&T_0^3X_0^{k-3}X_2-T_1^3X_1^{k-2}\\
F_{k-1,2}(\T,\X)&=& T_0^2X_1-T_1^2X_0\\
F_{k,1}(\T,\X)&=& T_0X_1^k-T_1X_0^{k-1}X_2\\
F_{k+1,1}(\T,\X)&=& T_0X_0^{k-2}X_2-T_1X_1^{k-1}\\
F_{k+2,0}(\T,\X)&=& X_0^{2k-3}X_2^2-X_1^{2k-1}
\end{array}
 $$
with bidegrees
$$
\{ (2k-3,1),(2k-5,2),\dots, (2(k-n)-1,n),\dots, (3,k-2),(2,1),(1,k),(1,k-1),(0,2k-3) \}.
$$
The elements of $\cF_0$ have already been worked out in our previous paper \cite[Example 3.5]{CD13b}. 
In order to describe the families $\cF_1$ and $\cF_2,$ we note that
$$
m_{\ell(n)}=\left\{\begin{array}{ccl}
k-1&\,\mbox{ if }\,&1\leq n <k-1\\
k+1 &\,\mbox{ if }\,&k-1 \leq n \leq k \\
\end{array}\right.
$$
and
$$ \rho(n)=\left\{\begin{array}{ccl}
n+1&\,\mbox{ if }\,& 1\leq n <k-2,\\
k+1 & \,\mbox{ if }\,& n =k-2,\\
k & \,\mbox{ if }\,& n =k-1,\\
k+2 & \,\mbox{ if }\,& n =k.\\
\end{array}\right.
$$
So, $\cF_1 \subset S^{k+2}$ is made by
 \[\begin{array}{l}
\bs_{n,n+1}=X_0\e_n-T_0^2\e_{n+1}-T_1^{2(k-n+1)-1}X_1^n\e_{k-1},   \\
\bs_{n,k-1}=X_1\e_n-T_0^{2(k-n+1)-1}X_0^{n-1}X_2\e_{k-1}-T_1^2\e_{n+1} ,
\mbox{ for }1\leq n <k-2,\\
\bs_{k-2,k-1}=X_1\e_{k-2}-T_0X_0^{k-3}X_2\e_{k-1}-T_1^2\e_{k+1},   \\
\bs_{k-2,k+1}=X_0\e_{k-2}-T_0^2\e_{k+1}-T_1X_1^{k-2}\e_{k-1} , \\
\bs_{k-1,k}=X_1^{k-1}\e_{k-1}-T_0\e_k-T_1X_0\e_{k+1},   \\
\bs_{k-1,k+1}=X_0^{k-2}X_2\e_{k-1}-T_0X_1\e_{k+1}-T_1\e_k ,\\
\bs_{k,k+1}=X_0^{k-2}X_2\e_k-X_1^k\e_{k+1}-T_1\e_{k+2} ,\\
\bs_{k+1,k+2}=X_0^{k-1}X_2\e_{k+1}-T_0\e_{k+2}-X_1^{k-1}\e_k ,\\
\end{array}
\]
with
 \[\begin{array}{l}
\mbox{bideg}(\bs_{n,n+1})= \mbox{bideg}(\bs_{n,k-1})=(2(k-n)-1,n+1)$, for $1\leq n <k-2,\\
 \mbox{bideg}(\bs_{k-2,k-1})= \mbox{bideg}(\bs_{k-2,k+1})=(3,k-1),\\
  \mbox{bideg}(\bs_{k-1,k})= \mbox{bideg}(\bs_{k-1,k+1})=(2,k),\\
  \mbox{bideg}(\bs_{k,k+1})= \mbox{bideg}(\bs_{k+1,k+2})=(1,2k-1).
  \end{array}
\]
In addition, $\cF_2 \subset S^{2k}$ consists of the following $k$ elements
 \[\begin{array}{l}
\bs_{n,n+1,k-1}=X_1\e_{n,n+1}-X_0\e_{n,k-1}+T_0^2\e_{n+1,k-1}-T_1^2\e_{n+1,n+2},
\mbox{ for }1\leq n <k-3,\\
\bs_{k-3,k-2,k-1}=X_1\e_{k-3,k-2}-X_0\e_{k-3,k-1} +T_0^2e_{k-2,k-1}-T_1^2\e_{k-2,k+1},   \\
\bs_{k-2,k+1,k-1}=X_0\e_{k-2,k-1}-X_1\e_{k-2,k+1}+T_0\e_{k-1,k+1}-T_1\e_{k-1,k} ,\\
\bs_{k-1,k.k+1}=X_0^{k-2}X_2\e_{k-1,k}-X_1^{k-1}\e_{k-1,k+1}+T_0\e_{k,k+1}-T_1\e_{k,k+2}.
\end{array}
\]
with 
\[\begin{array}{l}
\mbox{bideg}(\bs_{n,n+1,k-1} )=(2(k-n)-1,n+2)$, for $1\leq n <k-2,\\
 \mbox{bideg}(\bs_{k-2,k+1,k-1})=(3,k),\\
  \mbox{bideg}(\bs_{k-1,k,k+1})= \mbox{bideg}(\bs_{k-1,k+1})=(2,2k-1).
  \end{array}
\]
This concludes the computation of the elements in the minimal resolution of $\mbox{Rees}({\mathcal I})$ for this case. Let us finish the paper by showing that $\nu_{2,d}$ grows quadratically with $d.$ Indeed, as in this case we have $\tau_q=\tau_k=-1$, and $|\sigma_q-\tau_q|=k,$ then \eqref{pp1} reads as follows:
\begin{equation}\label{specc}
\left\{\begin{array}{lllr}
2\alpha_1+(2k-1)\alpha_2&<&-1&\,\mbox{or}\\
(2k-1)\alpha_0+(2k-3)\alpha_1&<&2k^2-9k+8.&
\end{array}
\right.
\end{equation}
The first inequality has clearly no positive solutions. Now, note that if 
\begin{equation}\label{p1p}
(2k-1)(\alpha_0+\alpha_1)<2k^2-9k+8,
\end{equation} then
$(\alpha_0,\,\alpha_1)$ satisfies the second inequality of \eqref{specc} above.
As $\frac{2k^2-9k+8}{2k-1}=k-4+\frac{4}{2k-1},$ we get from \eqref{p1p} that if
$\alpha_0+\alpha_1\leq k-4,
$ 
then $(\alpha_0,\,\alpha_1)$ satisfies the second inequality of \eqref{specc}. The number of solutions of the last inequality in $\N^2$ can be computed easily:  it is equal to ${k-2\choose 2}=\frac{(k-2)(k-3)}{2}={\mathcal{O}}\big(d^2/8\big).$ This shows that $\nu_{2,d}$ grows at least as $d^2/8$ in this family of examples.

\bigskip


\begin{thebibliography}{XXXXXX}

\bibitem[AL94]{AL94}
Adams, William W.; Loustaunau, Philippe.
\newblock{\em An introduction to Gr\"obner bases.\/}
\newblock Graduate Studies in Mathematics, 3. American Mathematical Society, Providence, RI, 1994.

%

\bibitem[BK86]{BK86}
Brieskorn, E;  Kn\"orrer, H.
\newblock{\em  Plane algebraic curves.\/}
\newblock Birkh\"auser, 1986.

%

\bibitem[BJ03]{BJ03}
Bus\'e, Laurent; Jouanolou, Jean-Pierre.
\newblock{\em On the closed image of a rational map and the implicitization problem.\/}
\newblock J. Algebra 265 (2003), no. 1, 312–357.

\bibitem[Bus09]{bus09}
Bus\'e, Laurent.
\newblock{\em On the equations of the moving curve ideal of a rational algebraic plane curve.\/}
\newblock  J. Algebra  321  (2009),  no. 8, 2317--2344.

%

%
%
%
\bibitem[CA00]{cas00}
Casas-Alvero, Eduardo.
\newblock{\em Singularities of plane curves.\/}
\newblock London Mathematical Society Lecture Note Series, 276. Cambridge University Press, Cambridge, 2000.
%
%
%
\bibitem[CD10]{CD10}
Cortadellas Ben\'itez, Teresa; D'Andrea, Carlos.
\newblock{\em Minimal generators of the defining ideal of the Rees Algebra associated to monoid parametrizations.\/}
\newblock Computer Aided Geometric Design, Volume 27, Issue 6, August 2010, 461--473.

%
\bibitem[CD13]{CD13}
Cortadellas Ben\'itez, Teresa; D'Andrea, Carlos.
\newblock{\em Rational plane curves parametrizable by conics.\/}
\newblock J. Algebra 373 (2013) 453--480.
%
\bibitem[CD14]{CD13b}
Cortadellas Ben\'itez, Teresa; D'Andrea, Carlos.
\newblock{\em Minimal generators of the defining ideal of the Rees Algebra associated to a rational plane parameterization with $\mu=2.$\/} To appear in {\em Canadian Journal of Mathematics.}
\newblock {\tt  http://dx.doi.org/10.4153/CJM-2013-035-1 }

%
\bibitem[Cox08]{cox08}
Cox, David A.
\newblock {\em The moving curve ideal and the Rees algebra.\/}
\newblock Theoret. Comput. Sci.  392  (2008),  no. 1-3, 23--36.

%
\bibitem[CHW08]{CHW08}
Cox, David; Hoffman, J. William; Wang, Haohao.
\newblock {\em Syzygies and the Rees algebra.\/}
\newblock J. Pure Appl. Algebra  212  (2008),  no. 7, 1787--1796.

%


\bibitem[CLO07]{CLO07}
Cox, David; Little, John; O'Shea, Donal.
\newblock{\em Ideals, varieties, and algorithms. An introduction to computational algebraic geometry and commutative algebra.\/}
\newblock Third edition. Undergraduate Texts in Mathematics. Springer, New York, 2007.

\bibitem[CLO98]{CLO98}
Cox, David A.; Little, John; O'Shea, Donal.
\newblock{\em Using algebraic geometry.\/} Second edition.
\newblock Graduate Texts in Mathematics, 185. Springer, New York, 1998.


%
%
%
%
%
\bibitem[ES96]{ES96}
Eisenbud, David; Sturmfels, Bernd.
\newblock{\em Binomial ideals.\/}
\newblock Duke Math. J. 84 (1996), no. 1, 1--45.

\bibitem[GRV09]{GRV09}
 Gitler, Isidoro; Reyes, Enrique; Villarreal, Rafael H.
\newblock{\em Blowup algebras of square-free monomial ideals and some links to combinatorial optimization problems.\/}
\newblock Rocky Mountain J. Math. 39 (2009), no. 1, 71--102.


\bibitem[HS12]{HS12}
Hassanzadeh, Seyed Hamid; Simis, Aron.
\newblock{\em Implicitization of the Jonqui\`eres parametrizations.\/}
\newblock To appear in J. Commut. Algebra.  {\tt arXiv:1205.1083}

\bibitem[HSV08]{HSV08}
Hong, Jooyoun; Simis, Aron; Vasconcelos, Wolmer V.
\newblock{\em On the homology of two-dimensional elimination.\/}
\newblock  J. Symbolic Comput.  43  (2008),  no. 4, 275--292.

\bibitem[HSV09]{HSV09}
Hong, Jooyoun; Simis, Aron; Vasconcelos, Wolmer V.
\newblock{\em Equations of almost complete intersections.\/}
\newblock  Bulletin of the Brazilian Mathematical Society,
June 2012, Volume 43, Issue 2, 171--199.

\bibitem[HW10]{HW10}
Hoffman, J. William; Wang, Haohao.
\newblock{\em Defining equations of the Rees algebra of certain parametric surfaces.\/}
\newblock Journal of Algebra and its Applications, Volume: 9, Issue: 6(2010), 1033--1049



%
%

\bibitem[KPU09]{KPU09}
Kustin, Andrew R.; Polini, Claudia; Ulrich, Bernd.
\newblock{\em Rational normal scrolls and the defining equations of Rees Algebras.\/}
\newblock  J. Reine Angew. Math. 650 (2011), 23--65.

\bibitem[KPU13]{KPU13}
Kustin, Andrew; Polini, Claudia; Ulrich, Bernd.
\newblock{\em The bi-graded structure of Symmetric Algebras with applications to Rees rings.\/}
\newblock {\tt arXiv:1301.7106 }.

\bibitem[Mac]{mac}
Grayson, Daniel R.; Stillman, Michael E.
\newblock{\em Macaulay 2, a software system for research in algebraic geometry.\/}
\newblock Avabilable at {\tt http://www.math.uiuc.edu/Macaulay2/}

\bibitem[MS13]{MS10}
Mukhopadhyay, Debasish; Sengupta, Indranath.
\newblock{\em The Rees Algebra for Certain Monomial Curves.\/}
Commutative Algebra and Algebraic Geometry (CAAG 2010), 199-218, Ramanujan Math.
Soc. Lect. Note Ser., 17, Ramanujan Math. Soc., Mysore, 2013.


\bibitem[PS98]{PS98}
Peeva, Irena;  Sturmfels, Bernd.
\newblock{\em Syzygies of codimension 2 lattice ideals.\/}
\newblock Math. Z. 229 (1998), no. 1, 163--194.

%
%
%
\bibitem[SC95]{SC95}
Sederberg, Thomas; Chen, Falai.
\newblock{\em Implicitization using moving curves and surfaces.\/}
\newblock Proceedings of SIGGRAPH, 1995, 301--308.

\bibitem[SGD97]{SGD97}
Sederberg, Tom; Goldman, Ron; Du, Hang.
\newblock{\em  Implicitizing rational curves by the method of moving algebraic curves.\/}
\newblock Parametric algebraic curves and applications (Albuquerque, NM, 1995).
\newblock J. Symbolic Comput.  23  (1997),  no. 2-3, 153--175.

%
%
%
%
\bibitem[SWP08]{SWP08}
Sendra, J. Rafael; Winkler, Franz; P\'erez-D\'iaz, Sonia.
\newblock{\em Rational algebraic curves. A computer algebra approach.\/}
\newblock Algorithms and Computation in Mathematics, 22. Springer, Berlin, 2008.

\bibitem[Syl84]{syl84}
Sylvester, J. J.
\newblock{\em Mathematical questions with their solutions.\/}
\newblock  Educational Times 41 (21), 1884.
%
%

\bibitem[Vil08]{vil08}
Villarreal, Rafael H.
\newblock{\em Rees algebras and polyhedral cones of ideals of vertex covers of perfect graphs.\/}
\newblock J. Algebraic Combin. 27 (2008), no. 3, 293--305.

\bibitem[vzGG03]{vzGG03}
von zur Gathen, Joachim; Gerhard, J\"urgen.
\newblock{\em Modern computer algebra.\/} Second edition.
\newblock Cambridge University Press, Cambridge, 2003.

\bibitem[Wal50]{wal50}
Walker, R.J.
\newblock{\em Algebraic Curves.\/}
\newblock Princeton Univ. Press, 1950.


\bibitem[Wol10]{math}
Wolfram Research, Inc.
\newblock{\em Mathematica,\/}
\newblock Version 8.0, Champaign, IL (2010).

\end{thebibliography}
\end{document}